\newtheorem{theorem}{Theorem}[section]
\newtheorem{lemma}[theorem]{Lemma}
\newtheorem{prop}[theorem]{Proposition}
\newtheorem{defn}[theorem]{Definition}
\newtheorem{cor}[theorem]{Corollary}
\newtheorem{ex}[theorem]{Example}
\newcommand{\T}{\mathcal{T}}
\newcommand{\underlineT}{\underline{T}}
\newcommand{\overlineT}{\overline{T}}
\newcommand{\ulp}{\underline{p}}
\newcommand{\olp}{\overline{p}}
\newcommand{\X}{\mathcal{X}}
\newcommand{\Y}{\mathcal{Y}}
\newcommand{\SSS}{\mathcal{S}}
\newcommand{\N}{\mathbb{N}}
\newcommand{\R}{\mathbb{R}}
\newcommand{\one}{\mathbb{1}}
\newcommand{\PP}{\mathbb{P}}
\newcommand{\W}{\mathscr{W}}
\newcommand{\C}{\mathcal{C}}
\newcommand{\D}{\mathcal{D}}
\newcommand{\A}{\mathcal{A}}
\newcommand{\E}{\mathcal{E}}
\newcommand{\F}{\mathcal{F}}
\newcommand{\Pcal}{\mathcal{P}}
\newcommand{\EE}{\mathbb{E}}
\newcommand{\uppereach}{ \rightharpoonup }
\newcommand{\lowereach}{ \rightharpoondown 
  }
  \newcommand{\ch}{\operatorname{co}}
\newcommand{\notuppereach}{\not\rightharpoonup}
\newcommand{\notlowereach}{ \not\rightharpoondown
  }
\DeclareMathOperator*{\argmax}{argmax}
\journal{International Journal of Approximate Reasoning}
\begin{document}

\begin{frontmatter}

\title{Computing Lower and Upper Hitting Probabilities for Imprecise Markov Chains}

\author{Marco Sangalli\corref{cor1}\fnref{equal}}
\ead{m.sangalli@tue.nl}
\author{Rick Reubsaet\fnref{equal}}
\author{Erik Quaeghebeur}
\author{Thomas Krak}

\fntext[equal]{Equal Contribution}
\cortext[cor1]{Corresponding Author}

\affiliation{organization={Eindhoven University of Technology},
            city={Eindhoven},
            country={Netherlands}}

\begin{abstract}
We study the computation of lower and upper probabilities of hitting a target set of states for imprecise Markov chains, where transition uncertainty is modelled by a convex set of transition matrices.  
In the precise case, hitting probabilities are the minimal nonnegative solution of a linear system and admit a closed-form expression. 
We investigate the notion of reachability in the imprecise setting. The literature review highlights several different definitions of lower reachability; thus, we explore the relations among them and present examples to clarify their logical implications. 
Using this revised definition of reachability for imprecise Markov chains, we partition the state space into classes of states whose hitting probabilities are trivially zero or one, and those which require further computation. For these nontrivial states, we show that the lower hitting probability is the unique solution of a nonlinear fixed-point equation, while the same does not hold for upper hitting probabilities.
For the practical computation of lower and upper hitting probabilities, we propose iterative algorithms that alternate between solving a linear system and choosing an extreme point from the set of transition matrices. 
Numerical experiments demonstrate that, in practice, these algorithms converge in substantially fewer iterations than the theoretically established worst-case bound.
\end{abstract}

\begin{keyword}
Imprecise Markov Chain \sep Hitting Probability \sep Imprecise Probability

\end{keyword}

\end{frontmatter}

\section{Introduction}
Hitting probabilities play a central role in the analysis of stochastic systems, capturing the likelihood that a process enters a designated set of target states in some finite time. In the classical precise setting of homogeneous Markov chains, these probabilities arise as the minimal nonnegative solution of a linear system and admit a closed-form representation via the fundamental matrix \cite{norrisbook1997markov,kemeny1976finite}. Hitting probabilities can also be regarded as discrete harmonic functions and enjoy a deep analogy with node voltages in potential theory and electrical networks~\cite{DoyleSnell1984}. 

In practical applications there is often uncertainty about the transition dynamics, which can be estimated from limited data or elicited from domain experts, yielding a convex set of transition matrices rather than a single, exact one.
A natural and rigorous way to represent this kind of epistemic uncertainty is through credal sets -- convex sets of probability distributions -- leading to the framework of imprecise Markov chains (IMCs)~\cite{DeCooman2009, HermansSkulj2014, hartfiel_seneta_1994}. In this framework, each row of the transition matrix is specified by a convex set of admissible probability distributions, and the inference quantities of interest are lower and upper hitting probabilities. These bound the true probability of reaching the target states under all compatible transition matrices.
These values provide conservative guarantees that are crucial in high-stakes decision-making contexts.
They naturally connect to MDPs as they correspond to the optimal value in reachability problems~\cite{Puterman1994, BaierKatoen2008}, and to stochastic-game formulations as they correspond to value bounds in zero-sum reachability games~\cite{Bovy2024}.

Our contributions build on the work of Krak et al.~\cite{krak2019hitting, krak2021comphit}. They developed characterisations of hitting times and probabilities for IMCs under various model semantics and presented algorithms for computing lower and upper hitting times. 
We extend this work by (i) discussing and clarifying the definition of reachability in the imprecise setting, (ii) giving a more general characterisation of lower and upper hitting probabilities, and (iii) proposing iterative algorithms to compute these quantities.

The structure of the paper is as follows.
Section \ref{sec:preli} introduces the preliminary notions needed for the following sections. In particular, we define hitting probabilities for homogeneous Markov chains, we explain the concept of imprecise Markov chains, and we introduce lower and upper transition operators. 
Section \ref{sec:charac} begins our contributions. There, we formalise the notions of lower and upper reachability in the imprecise setting; we review the different definitions that appear in the literature, clarify the logical relationships between them, and give a few illustrative examples. 
With these definitions in hand, we isolate those states whose (lower/upper) hitting probabilities are trivial, either zero or one.
We then prove that for nontrivial states, the lower hitting probability is the unique solution of a nonlinear fixed-point equation. Krak et al.~\cite{krak2019hitting} characterise the upper hitting probability as the minimal nonnegative solution of a nonlinear fixed-point equation, but this solution need not be unique. We illustrate this phenomenon with a counterexample and then show that uniqueness is restored under stronger assumptions on the set $\T$.
In Section \ref{sec:computing}, we turn our attention to developing iterative algorithms to compute lower and upper hitting probabilities. Inspired by the work of Krak~\cite{krak2021comphit} on hitting times, we propose iterative algorithms to compute lower and upper hitting probabilities that alternate between solving a linear system (restricted to a suitable subspace) and selecting an extreme point transition matrix from the credal sets. We rigorously prove their convergence and, in Section~\ref{sec:practical}, we provide a detailed analysis of their computational cost.
The empirical performance of these algorithms is explored in Section~\ref{sec:numanal}, which is dedicated to numerical experiments. Here we show how the theoretical bound on the number of iterations previously established is never attained in practice, and we point out an interesting relation between the density of the graph and the average number of iterations needed for convergence. We conclude with some summarising remarks and pointers to possible future work in Section~\ref{sec:conclusion}.

\section{Preliminaries}\label{sec:preli}
This section introduces the basic concepts of stochastic processes~\cite{Dudley2002}, Markov chains~\cite{norrisbook1997markov, kemeny1976finite}, hitting probabilities for homogeneous Markov chains~\cite{norrisbook1997markov}, and imprecise Markov chains~\cite{DeCooman2009, HermansSkulj2014, hartfiel_seneta_1994}.
\subsection{Stochastic Processes and Markov Chains}
Let us denote by $\N$ the set of positive integers and define $\N_0\coloneqq\N\cup\{0\}$. Let $\X$ be a discrete nonempty finite set of cardinality $N\in \N$ and denote by $2^\X$ its power set. Fix an arbitrary ordering over $\X$: this allows us to identify functions in $\R^\X$ and $N$-dimensional vectors.

Define the sample space $\Omega\coloneqq\X^{\N_0}$ and endow it with the product $\sigma$-algebra $\F\coloneqq(2^\X)^{\otimes\N_0}$, i.e. the $\sigma$-algebra generated by cylinder sets.
A \textit{stochastic process} $(X_n)_{n\in \N_0}$ is a family of random variables on this measurable space \[X_n:(\Omega, \F) \to (\X,2^\X)\] where we set $X_n(\omega)=\omega(n)$ for all $\omega \in \Omega$ and all $n\in \N_0$~\cite{Dudley2002}.
Let $\PP$ be a probability measure on the measurable space $(\Omega, \F)$, so that $(\Omega,\F,\PP)$ is a probability space.
The probability measure $\PP$ determines the law of the stochastic process; in the sequel, we identify the process with the probability measure $\PP$, which allows us to consider different processes $\PP$ over the same measurable space $(\Omega,\F)$~\cite{krak2019hitting}.

A stochastic process $(X_n)$ is said to be a \textit{Markov chain} if it satisfies the Markov property:
\begin{equation}\label{eq:markovprop}
    \PP(X_{n+1}=x_{n+1}\mid X_{0:n}=x_{0:n})=\PP (X_{n+1}=x_{n+1}\mid X_{n}=x_{n})
\end{equation}
for all $x_0, \dots, x_n,x_{n+1} \in \X$ and all $n\in\N_0$. This property is often interpreted as the process being memoryless; the transition probabilities depend only on the current state, and any previous history is not relevant.
A Markov chain $(X_n)$ is said to be (time-)\textit{homogeneous} if 
\begin{equation}
   \PP(X_{n+1}=y\mid X_n=x)=\PP(X_1=y\mid X_0=x) 
\end{equation}
for all $x,y\in \X$ and for all $n\in \N_0$.

A  matrix $T\in \R^{N\times N}$  is called a \textit{transition matrix} if it satisfies
\begin{enumerate}
    \item $T(x,y)\geq 0$ for all $x,y\in \X$, and
    \item $\displaystyle \sum_{y\in X}T(x,y)=1$ for all $x\in \X$.
\end{enumerate}
Note that each row $T(x,\cdot)$ of a transition matrix is a probability mass function on $\X$. Every transition matrix $T$ uniquely determines a homogeneous Markov chain $\PP_T$ up to its initial distribution, by setting
\begin{equation}
    \PP_T(X_{n+1}=y\mid X_n=x)=T{(x,y)}
\end{equation}
for all $x,y\in \X$ and all $n\in \N_0$. Thus, transition matrices are often used as the canonical parameter to specify homogeneous Markov chains. 
\begin{defn}[{{\cite[{Section 1.2}]{norrisbook1997markov}}}]\label{def:reach}
    Let \(x,y\in \X\), $x\ne y$, and let \(T\) be a transition matrix. We say that $x$ reaches $y$, denoted by  ``$x \to^{T} y$'', if 
    \[
    \PP_T(X_n=y \text{ for some $n\in \N_0$}\mid X_0=x)>0.
    \]
\end{defn}
The following proposition gives the equivalence between three formulations of reachability for homogeneous Markov chains.
\begin{prop}[{\cite[{Theorem 1.2.1}]{norrisbook1997markov}}]
\label{prop:reachcond}
     Let \(\PP_T\) be a homogeneous Markov chain defined on the state space \(\X\) with transition matrix \(T\). Let \(x,y\in \X\). Then the following statements are equivalent:
\begin{enumerate}
    \item \(x\to^{T} y\),
    \item there exists $n\in \N$ and a sequence of states \(x=x_0,\ldots,x_n=y\) in \(\X\) such that \(T(x_k,x_{k+1})>0\) for all $0\le k\le n-1$, and \(x_k\neq x_{j}\) for all $0\le k<j\le n$,
    \item \(T^n(x,y)>0\) for some \(n\in\N_0\).
\end{enumerate}
\end{prop}

\subsection{Hitting Probabilities for Markov Chains}
In this section, we introduce hitting indicators and hitting probabilities, then we characterise hitting probabilities for homogeneous Markov chains.
\begin{defn}
    Let $A\subseteq \X$ be a nonempty target set of states. The \textit{hitting indicator} is defined as the function \(\phi_A  : \Omega \to \{0, 1\}\) given by
\[
\phi_A (\omega) \coloneqq \sup_{n\in \N_0}\one_A(\omega(n)), 
\]
where $\one_A$ is the indicator function for $A$.
\end{defn}
This quantity is a Boolean function that has a value of one if the path reaches \(A\) and zero otherwise. 

\begin{defn}
We define the \textit{hitting probability} of a process \(\PP\) starting in \(x \in \X\) as the expected value of the hitting indicator
\(
\EE_\PP[\phi_A  \mid X_0=x].
\)
\end{defn}
The hitting probability of a process \(\PP\) starting in \(x \in \X\) can also be written as $\PP (X_n \in A \text{ for some $n\in \N_0$} \mid X_0=x)$, 
i.e. the probability that \(A\) is reached in some number of steps. In the remainder of the paper, we collect these hitting probabilities in the real-valued function \(p_A^\PP : \X \to [0, 1]\) defined as
\[
p_A^\PP(x) \coloneqq \EE_\PP[\phi_A  \mid X_0=x ].
\]

Let \(\PP_T\) be a homogeneous Markov chain with transition matrix \(T\), and define $p^T\coloneqq p_A^{\PP_T}$.
Let us introduce the set of states that cannot reach $A$, namely
\begin{align*}
    \C_T\coloneqq&\{x\in A^c \mid \forall y\in A: x\not\to^T y \}\\
    =&\{x\in A^c \mid  \forall y\in A, \forall  n\in \N_0: T^n(x,y)=0 \}.
\end{align*}
We can characterise the hitting probabilities of a homogeneous Markov chain as the minimal nonnegative solution of a linear system of equations.
\begin{prop}[{\cite[{Theorem 1.3.2}]{norrisbook1997markov}}]\label{prop:hitprob}
    Let \(\PP_T\) be a homogeneous Markov chain with transition matrix \(T\).
    Then \(p^T(x) = 1\) for all \(x \in A\) and \(p^T(x) = 0\) if and only if \(x \in \C_T\). Moreover, \(p^T\) is the minimal nonnegative solution of the system
\begin{equation}\label{eq:hitprob}
    p^T = \one_A + \one_{A^c} \cdot Tp^T. 
\end{equation}
\begin{proof}
    The only claim not explicitly given in Norris~\cite[{Theorem 1.3.2}]{norrisbook1997markov} is that \(p^T(x) = 0\) if and only if \(x \in \C_T\). This follows directly from the definition of $\C_T$ and Proposition \ref{prop:reachcond}.
\end{proof}
\end{prop}  

\subsection{Restrictions and Extensions of Functions and Matrices to a Subspace} Let $\emptyset\subset\SSS\subseteq \Y\subseteq \X$ and $f\in \R^{\Y}$. 
The definitions below give a canonical way to view functions and operators on the smaller set $\SSS$ as functions and operators on the larger set $\Y$ (and vice-versa).
We define the restriction of $f$ to $\SSS$ as
\begin{equation*}
    f|_{\SSS}(x)\coloneqq f(x) \text{ for all $x\in \SSS$}.
\end{equation*}
For any $g\in \R^{\SSS}$ we define $g^{\uparrow \Y}\in \R^{\Y}$, the extension of $g$ to $\Y$, as
\begin{equation*}
    \big[g^{\uparrow \Y}\big](x)\coloneqq
    \begin{cases}
        g(x) &\quad \text{ if  $x\in \SSS$},\\
        0 &\quad \text{ if $x\in \Y\setminus\SSS$}.
    \end{cases}
\end{equation*}
This extension is the right inverse of the restriction, namely
\[
\big(g^{\uparrow \Y}\big)\big|_{\SSS}=g,
\]
for all $g\in \R^{\SSS}$.
Let $M:\R^{\Y}\to \R^{\Y}$ . Define the restriction of $M$ to $\SSS$ (or, rather, to $\R^{\SSS}$) as 
\begin{equation*}
    M\big|_{\SSS}  g\coloneqq \left(M\big(g^{\uparrow \Y}\big)\right)\Big|_{\SSS} \ \text{ for all $g\in \R^{\SSS}$}.
\end{equation*}
If $M$ is a linear mapping, i.e. a $|\Y|\times |\Y|$ matrix, then $M|_{\SSS}$ coincides with the $|\SSS|\times |\SSS|$ submatrix indexed by $\SSS$.
\subsection{Closed-Form Solution for Hitting Probabilities}
In this section, we derive a closed-form expression for hitting probabilities for homogeneous Markov chains.
Most of the material in this section appears in standard references \cite[e.g][]{norrisbook1997markov, kemeny1976finite,DoyleSnell1984, GrinsteadSnelll1997}, however we present it here in full detail, since the terminology and results are essential for the subsequent development in the imprecise setting.

By Proposition \ref{prop:hitprob}, hitting probabilities of a homogeneous Markov chain \(\PP_T\) with transition matrix \(T\) are trivial for states in \(A\cup \C_T\). The system of equations in \eqref{eq:hitprob}, as we restrict to \(A^c\setminus \C_T\), can be reduced to
\begin{equation}\label{eq:hitprobAc}
    p^T|_{A^c\setminus \C_T} = (T p^T)|_{A^c\setminus \C_T}, 
\end{equation}
where \(p^T\) denotes the vector of hitting probabilities of \(\PP_T\). The following lemma, whose proof can be found in the appendix, allows us to rewrite the right-hand side of \eqref{eq:hitprobAc} in a more convenient way.

\begin{lemma}\label{lemma:rewritehitprob}
Let $\SSS\subset A^c\subset \X$, let \(T\) be a transition matrix, and let \(f\in\R^\X\) be any function such that \(f(x)=1\) for all \(x\in A\) and \(f(x)=0\) for all \(x\in \SSS\). Then
\begin{equation}
   (Tf)|_{A^c\setminus \SSS} = \bigl(T|_{A^c\setminus \SSS} (f|_{A^c\setminus \SSS})\bigr) + (T\one_A)|_{A^c\setminus \SSS}. 
\end{equation}
\end{lemma}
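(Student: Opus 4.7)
The plan is to reduce the identity to pure linearity of $T$ by decomposing $f$ using the partition of $\X$ that the hypotheses naturally induce. Since $\SSS\subset A^c$, the three sets $A$, $\SSS$, and $A^c\setminus\SSS$ partition $\X$. The assumptions on $f$ pin it down on $A\cup\SSS$ (it equals $1$ on $A$ and $0$ on $\SSS$), so only the values on $A^c\setminus\SSS$ are free. First I would check case by case on this partition that
$$
f \;=\; \one_A \;+\; \bigl(f|_{A^c\setminus\SSS}\bigr)^{\uparrow \X},
$$
where the extension convention defined earlier inserts zeros on $A\cup\SSS$, exactly matching the prescribed values of $f$ there.

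Next I would apply $T$ to both sides and use linearity of the matrix-vector product to obtain
$$
Tf \;=\; T\one_A \;+\; T\bigl((f|_{A^c\setminus\SSS})^{\uparrow \X}\bigr),
$$
and then restrict to $A^c\setminus\SSS$ to get
$$
(Tf)|_{A^c\setminus\SSS} \;=\; (T\one_A)|_{A^c\setminus\SSS} \;+\; \bigl(T(f|_{A^c\setminus\SSS})^{\uparrow \X}\bigr)\big|_{A^c\setminus\SSS}.
$$
By the definition of the restriction of an operator to a subspace (with $\Y=\X$ and subset $A^c\setminus\SSS$), the second summand is exactly $T|_{A^c\setminus\SSS}\,(f|_{A^c\setminus\SSS})$, so rearranging gives the claim.

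There is no real obstacle here; the only subtlety is keeping the notation straight, since the symbol $\SSS$ used in the statement of the lemma plays the role of the generic subset that appears as $\SSS$ in the definition of the operator restriction, while the role of the ambient space $\Y$ is played by $\X$. The content of the lemma is simply that $T$ is linear and the hypotheses on $f$ make the decomposition into a "boundary" part $\one_A$ and an "interior" extension $(f|_{A^c\setminus\SSS})^{\uparrow\X}$ canonical.
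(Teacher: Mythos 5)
Your proof is correct and takes essentially the same route as the paper: both rest on the decomposition $f = \one_A + (f|_{A^c\setminus\SSS})^{\uparrow\X}$, followed by linearity of $T$ and the definition of the operator restriction. The only difference is presentational — you establish the vector identity for $f$ first and then apply $T$ and restrict, whereas the paper substitutes the same decomposition pointwise inside the sum $\sum_{y} T(x,y)f(y)$ for a fixed $x\in A^c\setminus\SSS$.
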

This lemma allows us to rewrite the system of equations in \eqref{eq:hitprobAc} as
\begin{align*}
    &p^T|_{A^c\setminus \C_T} = \bigl(T|_{A^c\setminus \C_T} p^T|_{A^c\setminus \C_T}\bigr) + (T\one_A)|_{A^c\setminus \C_T}\\
    &\ \Rightarrow \ (I-T|_{A^c\setminus \C_T})p^T|_{A^c\setminus \C_T}=(T\one_A)|_{A^c\setminus \C_T}.
\end{align*}
Hence, if we can show that the matrix \((I-T|_{A^c\setminus \C_T})\) is invertible, we can characterise \(p^T|_{A^c\setminus \C_T}\) and have a closed-form expression to compute it. 

We proceed with the following lemma, whose proof can be found in the appendix.
\begin{lemma}\label{lemma:412}
Let \(T\) be a transition matrix. Fix any \(x\in A^c\setminus \C_T\) and \(n\in\N\) with \(n\ge 2\). If \([T^n\one_A](x)>0\) and \([T^m\one_A](x)=0\) for all \(m<n\), then there exists some \(y\in A^c\setminus \C_T\), $y\ne x$ such that \(T(x,y)>0\) and \([T^{n-1}\one_A](y)>0\).
\end{lemma}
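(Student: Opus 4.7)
The plan is to expand the matrix power and then use the minimality hypothesis on $n$ to check each of the three required properties of the successor $y$. The key observation is the decomposition
\[
[T^n\one_A](x) \;=\; \sum_{z\in \X} T(x,z)\,[T^{n-1}\one_A](z).
\]
Since the left-hand side is strictly positive, there must exist at least one state $y\in\X$ with $T(x,y)>0$ and $[T^{n-1}\one_A](y)>0$. The rest of the proof consists in verifying that any such $y$ automatically satisfies $y\ne x$, $y\in A^c$, and $y\notin\C_T$.

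First, I would show $y\ne x$. Because $n>1$, we have $n-1\ge 1$ and so $n-1$ falls under the assumption $[T^m\one_A](x)=0$ for $m<n$. If $y=x$ held, then $[T^{n-1}\one_A](y)=[T^{n-1}\one_A](x)=0$, contradicting the positivity obtained above. Next, for $y\in A^c$, I would invoke the case $m=1$ of the same assumption: $[T\one_A](x)=\sum_{z\in A} T(x,z)=0$, so $T(x,z)=0$ for every $z\in A$. Since $T(x,y)>0$, this forces $y\in A^c$.

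Finally, I would argue $y\notin\C_T$ using Proposition~\ref{prop:reachcond}. From $[T^{n-1}\one_A](y)=\sum_{a\in A}T^{n-1}(y,a)>0$, there exists some $a\in A$ with $T^{n-1}(y,a)>0$. By the equivalence in Proposition~\ref{prop:reachcond}, this gives $y\to^{T} a$, i.e.\ $y$ reaches a state in $A$, so $y\notin\C_T$ by definition of $\C_T$.

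I do not foresee a hard step here: the whole argument is a one-step expansion together with three direct applications of the standing hypotheses. The only place requiring mild care is the case $y=x$, where the strict inequality $n>1$ is exactly what makes the index $n-1$ eligible for the minimality assumption; if the lemma had allowed $n=1$, this part would fail and a self-loop $T(x,x)>0$ with $x\in A$ would be a genuine counterexample shape. All other conclusions follow by unwinding definitions.
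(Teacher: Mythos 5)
Your proof is correct and follows essentially the same route as the paper: expand $[T^n\one_A](x)=\sum_{z}T(x,z)[T^{n-1}\one_A](z)$, pick a positive contributor $y$, and use the minimality hypothesis and the definition of $\C_T$ to verify the required properties. One small plus: you verify $y\ne x$ explicitly via $[T^{n-1}\one_A](x)=0$, a step the paper's proof leaves implicit.
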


Let $\mathbf{0}$ and $\mathbf{1}$ be the vectors made of all zeros and ones\footnote{The size of these vectors will be clear from the context.}, respectively, and let $\SSS\subseteq \X$.
Then, the quantity $[(T|_{\SSS})^k\mathbf{1}](x)$, for \(x\in \SSS\) and \(k\in\N_0\), is equal to
\begin{align*}
[(T|_{\SSS})^k\mathbf{1}](x)&= \sum_{y\in \SSS} (T|_{\SSS})^k(x,y)\\
&=\sum_{y\in \SSS} \sum_{z_1,\dots, z_{k-1}\in \SSS} T|_{\SSS}(x,z_1)\cdots T|_{S}(z_{k-1},y)\\
&=\PP_T(X_1\in \SSS,\dots, X_k\in \SSS \mid X_0=x),
\end{align*}
i.e. the probability that a homogeneous Markov chain with transition matrix \(T\) starting in state \(x\) stays in \(\SSS\) after \(k\) steps. Hence the following lemma holds.

\begin{lemma}\label{lemma:413}
Let \(T\) be a transition matrix and let \(\SSS\subseteq \X\). Let \(f, g\in\R^{\SSS}\) be such that \(f\le g\). Then \((T|_\SSS)^k f \le (T|_\SSS)^k g\) for all \(k\in\N_0\). Moreover, we have that
\begin{equation}
    \mathbf{0}\le (T|_\SSS)^k \mathbf{1}\le \mathbf{1} \label{pizza1}
\end{equation}
for all \(k\in\N_0\).
\end{lemma}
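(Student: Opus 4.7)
The plan is to reduce both claims to two elementary properties of $T|_\SSS$: it has nonnegative entries, and each of its rows sums to at most $1$ (substochasticity). Monotonicity then follows immediately, and the two-sided bound is obtained by induction using monotonicity together with substochasticity.

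First I would establish monotonicity. Given $f,g\in\R^\SSS$ with $f\le g$, the extensions satisfy $f^{\uparrow\X}\le g^{\uparrow\X}$, since both agree (with value $0$) off $\SSS$ and by assumption $f\le g$ on $\SSS$. Because $T$ has nonnegative entries, the linear map $T:\R^\X\to\R^\X$ is order-preserving, so $T f^{\uparrow\X}\le T g^{\uparrow\X}$, and restricting to $\SSS$ yields $T|_\SSS f \le T|_\SSS g$ by definition of the restriction. A one-line induction on $k$, using the fact that $(T|_\SSS)^{k+1} f = T|_\SSS\bigl((T|_\SSS)^k f\bigr)$, then gives $(T|_\SSS)^k f\le (T|_\SSS)^k g$ for all $k\in\N_0$.

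For the two-sided bound on $(T|_\SSS)^k\mathbf{1}$, the base case $k=0$ is immediate since $(T|_\SSS)^0\mathbf{1}=\mathbf{1}$. For the lower bound, I would observe that $T|_\SSS$ itself has nonnegative entries (being a submatrix of the nonnegative matrix $T$), so its $k$-th power is also entrywise nonnegative, and hence $(T|_\SSS)^k\mathbf{1}\ge\mathbf{0}$. For the upper bound, I would first show the one-step inequality $T|_\SSS\mathbf{1}\le\mathbf{1}$: for every $x\in\SSS$,
\[
[T|_\SSS\mathbf{1}](x)=\sum_{y\in\SSS}T(x,y)\le\sum_{y\in\X}T(x,y)=1,
\]
where the equality uses the row-stochasticity of $T$. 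Inductively, assuming $(T|_\SSS)^k\mathbf{1}\le\mathbf{1}$, monotonicity gives
\[
(T|_\SSS)^{k+1}\mathbf{1}=T|_\SSS\bigl((T|_\SSS)^k\mathbf{1}\bigr)\le T|_\SSS\mathbf{1}\le\mathbf{1},
\]
which closes the induction and completes the argument.

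There is no real obstacle here; the only subtlety worth stating carefully is the interaction between restriction and extension, namely that $T|_\SSS f = (T f^{\uparrow\X})|_\SSS$, which is what allows monotonicity of $T$ on $\R^\X$ to be transferred to $T|_\SSS$ on $\R^\SSS$.
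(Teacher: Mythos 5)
Your proof is correct and follows essentially the same approach as the paper: both reduce the claim to nonnegativity and substochasticity of the restricted matrix, establish monotonicity first, and then derive the two-sided bound by induction. The only cosmetic differences are that you route monotonicity through the extension/restriction identity $T|_\SSS f = (Tf^{\uparrow\X})|_\SSS$ (the paper works directly with the entries of the submatrix), and you get the lower bound from entrywise nonnegativity of $(T|_\SSS)^k$ rather than from monotonicity applied to $\mathbf{0}\le\mathbf{1}$ — both equally valid.
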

The proof of this lemma can be found in the appendix.

Next, we show that for the choice $\SSS = A^c\setminus\C_T$, the upper bound in \eqref{pizza1} becomes strict for all sufficiently large $k\in\mathbb{N}$. 
Indeed, by definition, every state in $A^c\setminus\C_T$ can reach $A$ in finitely many steps.
Hence, if $k$ exceeds the minimal number of steps $n$ required for $x$ to reach a state outside $A^c\setminus\C_T$, the probability of remaining in $A^c\setminus\C_T$ after $k\ge n$ steps is strictly lower than $1$. The result below formalises this; its proof can be found in the appendix.
\begin{lemma}\label{lemma:414}
Let \(T\) be a transition matrix and let $A$ be the target set. There exists \(n\in\N\) such that for all \(k\in\N\) with \(k\ge n\), it holds that
\[
[(T|_{A^c\setminus \C_T})^k\mathbf{1}](x) < 1 \quad \text{for all } x\in A^c\setminus \C_T.
\]
\end{lemma}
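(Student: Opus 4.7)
The plan is to reduce the claim to two ingredients: (i) the sequence $k\mapsto(T|_\SSS)^k\mathbf{1}$, with $\SSS\coloneqq A^c\setminus\C_T$, is pointwise non-increasing in $k$; and (ii) for every $x\in\SSS$ there is some $n_x\in\N$ with $[(T|_\SSS)^{n_x}\mathbf{1}](x)<1$. Granted these, finiteness of $\SSS$ allows me to set $n\coloneqq\max_{x\in\SSS}n_x$; monotonicity then yields $[(T|_\SSS)^k\mathbf{1}](x)\le[(T|_\SSS)^{n_x}\mathbf{1}](x)<1$ for every $x\in\SSS$ and every $k\ge n$, which is exactly the claim.

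Fact (i) will follow directly from Lemma~\ref{lemma:413}: its displayed bound gives $(T|_\SSS)\mathbf{1}\le\mathbf{1}$, and iterating the monotonicity part of the same lemma with $f=(T|_\SSS)\mathbf{1}$ and $g=\mathbf{1}$ yields $(T|_\SSS)^{k+1}\mathbf{1}\le(T|_\SSS)^k\mathbf{1}$ for every $k\in\N_0$. For fact (ii), I would exploit the definition of $\SSS$: since $x\notin\C_T$, Definition~\ref{def:reach} supplies some $y\in A$ and a path $x=x_0,x_1,\ldots,x_{n_x}=y$ of distinct states with $T(x_k,x_{k+1})>0$ for each $0\le k\le n_x-1$. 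Expanding the restriction operator identifies $[(T|_\SSS)^{n_x}\mathbf{1}](x)$ with $\PP_T(X_1,\ldots,X_{n_x}\in\SSS\mid X_0=x)$, the probability that the chain with matrix $T$ stays inside $\SSS$ throughout the first $n_x$ steps. Since $A\subseteq\SSS^c$, this probability is bounded above by $\PP_T(X_{n_x}\notin A\mid X_0=x)$, and using the specific path as a positive lower bound on $\PP_T(X_{n_x}\in A\mid X_0=x)$ gives
\[
    [(T|_\SSS)^{n_x}\mathbf{1}](x)\le 1-\prod_{k=0}^{n_x-1}T(x_k,x_{k+1})<1.
\]

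The only real subtlety I anticipate — and what I would be most careful about while writing — is the probabilistic meaning of $(T|_\SSS)^k\mathbf{1}$. Because $T$ is restricted to $\SSS$ at every multiplication, this quantity encodes the probability of the chain remaining inside $\SSS$ at \emph{every} one of the first $k$ steps, not merely being in $\SSS$ at time $k$; it is this slightly stronger event that the positive-probability escape path to $A\subseteq\SSS^c$ rules out at time $n_x$. Once that identification is in place the argument is essentially routine, and nothing beyond the finite reaching times guaranteed by the definition of $\SSS$ is needed to extract a uniform $n$.
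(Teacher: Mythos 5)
Your proof is correct and follows essentially the same strategy as the paper's: establish a per-state index $n_x$ with $[(T|_{A^c\setminus\C_T})^{n_x}\mathbf{1}](x)<1$, invoke the monotonicity from Lemma~\ref{lemma:413} to show the sequence is nonincreasing in $k$, and take $n=\max_x n_x$ using finiteness of the state space. The only point of difference is in how the per-state strict inequality is obtained: the paper takes $n_x$ to be the \emph{minimal} $n$ with $[T^n\one_A](x)>0$ and reasons about which paths contribute to that event, whereas you take $n_x$ to be the length of a simple escape path from Definition~\ref{def:reach} and simply observe that remaining in $\SSS$ for $n_x$ steps forces $X_{n_x}\notin A$, then bound $\PP_T(X_{n_x}\in A\mid X_0=x)$ below by the path-product. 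Your route to the per-state inequality is a bit more direct since it never needs the minimality of $n_x$, but structurally the two proofs coincide.
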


Since the intuition for Lemma \ref{lemma:414} was that it holds because every state in \(A^c\setminus \C_T\) reaches a state outside of \(A^c\setminus \C_T\), the result actually holds for every subset of \(A^c\setminus \C_T\) as well. This slightly more general result will prove useful in the imprecise setting later on.
\begin{cor}\label{cor:415}
Let \(T\) be a transition matrix and let \(\SSS\subset \X\) be such that \(\C_T\subseteq \SSS\subset A^c\). Then there exists \(n\in\N\) such that for all \(k\in\N\) with \(k\ge n\), it holds that
\[
[(T|_{A^c\setminus \SSS})^k\mathbf{1}](x) < 1 \quad \text{for all } x\in A^c\setminus \SSS.
\]
\end{cor}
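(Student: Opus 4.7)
The plan is to derive the corollary directly from Lemma \ref{lemma:414} via a dominance argument. Since $\C_T \subseteq \SSS$ implies $A^c \setminus \SSS \subseteq A^c \setminus \C_T$, remaining in the smaller set $A^c \setminus \SSS$ is harder than remaining in the larger set $A^c \setminus \C_T$. I would formalise this as a pointwise inequality between the two iterates and then invoke Lemma \ref{lemma:414} essentially as a black box.

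Concretely, I would first prove by induction on $k$ that
\begin{equation*}
    [(T|_{A^c \setminus \SSS})^k \mathbf{1}](x) \le [(T|_{A^c \setminus \C_T})^k \mathbf{1}](x) \qquad \text{for all } x \in A^c \setminus \SSS \text{ and all } k \in \N_0.
\end{equation*}
The case $k = 0$ is immediate since both sides equal $1$. For the induction step, I would expand the left-hand side as
\begin{equation*}
    [(T|_{A^c \setminus \SSS})^{k+1}\mathbf{1}](x) = \sum_{y \in A^c \setminus \SSS} T(x,y)\, [(T|_{A^c \setminus \SSS})^k \mathbf{1}](y),
\end{equation*}
apply the induction hypothesis termwise (legitimate because $T(x,y) \ge 0$), and then enlarge the index of summation from $A^c \setminus \SSS$ to the superset $A^c \setminus \C_T$. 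This last step only adds nonnegative terms since $(T|_{A^c \setminus \C_T})^k \mathbf{1} \ge \mathbf{0}$ by Lemma \ref{lemma:413}. The enlarged sum is precisely $[(T|_{A^c \setminus \C_T})^{k+1} \mathbf{1}](x)$, closing the induction.

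To conclude, I would invoke Lemma \ref{lemma:414} to obtain some $n \in \N$ such that $[(T|_{A^c \setminus \C_T})^k \mathbf{1}](y) < 1$ for every $y \in A^c \setminus \C_T$ and every $k \ge n$. Specialising this to $y = x \in A^c \setminus \SSS \subseteq A^c \setminus \C_T$ and combining with the dominance inequality yields $[(T|_{A^c \setminus \SSS})^k \mathbf{1}](x) < 1$ for all $x \in A^c \setminus \SSS$ and all $k \ge n$, which is exactly the claim. I do not foresee a serious obstacle; the only subtlety is being careful about the two distinct restriction operators $T|_{A^c \setminus \SSS}$ and $T|_{A^c \setminus \C_T}$, which act on different spaces and must be compared through the extension-by-zero convention introduced in the preliminaries.
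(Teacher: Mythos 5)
Your proposal is correct and follows essentially the same route as the paper: an induction establishing the pointwise dominance $[(T|_{A^c\setminus \SSS})^k\mathbf{1}](x) \le [(T|_{A^c\setminus \C_T})^k\mathbf{1}](x)$ for $x\in A^c\setminus\SSS$ (paper starts at $k=1$, you at $k=0$; this is immaterial), with the induction step carried out by expanding the sum, applying the hypothesis termwise, and enlarging the index set using the nonnegativity from Lemma \ref{lemma:413}, then concluding by Lemma \ref{lemma:414}. The only cosmetic difference is that the paper's writeup spells out the extension-by-zero bookkeeping explicitly, which you note but do not write out; the substance is identical.
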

The proof of this corollary can be found in the appendix.

Using this result, we can conclude that powers of the transition matrix \(T\) restricted to \(A^c\setminus \SSS\) are smaller than one in norm.
\begin{cor}\label{cor:416}
Let \(T\) be a transition matrix and let \(\SSS\subset \X\) be such that \(\C_T\subseteq \SSS\subset A^c\). Then there exists some \(n\in\N\) such that
\[
||(T|_{A^c\setminus \SSS})^n|| < 1,
\]
where $||\cdot ||$ is the operator norm induced by the sup norm:
\begin{equation*}
    ||T||\coloneqq\sup_{\substack{f\in \R^N \\||f||_{\infty}=1 }} ||Tf||_{\infty}, \quad\text{ for $T\in \R^{N\times N}$.}
\end{equation*}
\end{cor}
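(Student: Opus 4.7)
The plan is to reduce this to Corollary \ref{cor:415} by using the explicit description of the operator norm induced by the sup norm. Recall that for a matrix $M \in \R^{k\times k}$, the operator norm $\|M\|$ equals the maximum absolute row sum $\max_i \sum_j |M(i,j)|$. When $M$ has nonnegative entries, this simplifies to $\max_i \sum_j M(i,j) = \max_i [M\mathbf{1}](i)$.

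First, I would observe that $T|_{A^c\setminus\SSS}$ is nonnegative, since it is a submatrix of the transition matrix $T$. Consequently, every power $(T|_{A^c\setminus\SSS})^n$ is also nonnegative (this is a trivial induction, but also follows from Lemma~\ref{lemma:413} applied with $f=\mathbf{0}$ and $g=(T|_{A^c\setminus\SSS})^{n-1}\mathbf{1}$ combined with the fact that products of nonnegative matrices are nonnegative). Hence by the remark above,
\begin{equation*}
    \|(T|_{A^c\setminus\SSS})^n\| \;=\; \max_{x\in A^c\setminus\SSS}\, [(T|_{A^c\setminus\SSS})^n \mathbf{1}](x).
\end{equation*}

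Next, I would invoke Corollary~\ref{cor:415} to pick $n\in\N$ such that $[(T|_{A^c\setminus\SSS})^n\mathbf{1}](x) < 1$ for every $x\in A^c\setminus\SSS$. Since $\X$ is finite, the set $A^c\setminus\SSS$ is finite as well, so the maximum over $x$ is attained and therefore also strictly less than $1$. Combining this with the previous display gives $\|(T|_{A^c\setminus\SSS})^n\| < 1$, as required.

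There is no real obstacle here; the statement is essentially a repackaging of Corollary~\ref{cor:415} via the row-sum formula for the sup-norm-induced operator norm on a space of nonnegative matrices, together with finiteness of the index set. The only small care needed is to note that we take a maximum rather than a supremum (so that strict inequality is preserved), which relies crucially on $\X$ being finite — an assumption already made from the outset in Section~\ref{sec:preli}.
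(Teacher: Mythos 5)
Your argument is correct and rests on the same essential reduction as the paper's: invoke Corollary~\ref{cor:415} to find $n$ with $[(T|_{A^c\setminus\SSS})^n\mathbf{1}](x)<1$ for all $x$, and then translate this into a bound on the operator norm. The only presentational difference is that you cite the standard maximum-row-sum characterisation of the sup-norm-induced operator norm for nonnegative matrices, whereas the paper derives the needed inequality $\|M^n\|\le\|M^n\mathbf{1}\|$ from scratch via the sandwich $-M^n\mathbf{1}\le M^n f\le M^n\mathbf{1}$ for $\|f\|\le 1$, using the monotonicity established in Lemma~\ref{lemma:413}; both routes are valid and lead to the same conclusion.
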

The proof of this result can be found in the appendix.

Corollary \ref{cor:416} allows us to finally obtain the results we are after regarding the invertibility of \((I-T|_{A^c\setminus \C_T})\).
\begin{lemma}\label{lemma:417}
Let \(T\) be a transition matrix and let \(\SSS\subset \X\) be such that \(\C_T\subseteq \SSS\subset A^c\). Then 
\[
\lim_{k\to+\infty}(T|_{A^c\setminus \SSS})^k \mathbf{1} = \mathbf{0}
\]
and the inverse $(I-T|_{A^c\setminus \SSS})^{-1}$ exists and satisfies
\[
(I-T|_{A^c\setminus \SSS})^{-1} = \sum_{k=0}^\infty (T|_{A^c\setminus \SSS})^k.
\]
\end{lemma}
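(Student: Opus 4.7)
The plan is to leverage Corollary \ref{cor:416}: it gives some $n\in\N$ with $||M^n||<1$, where I abbreviate $M\coloneqq T|_{A^c\setminus\SSS}$. Together with the uniform bound $\mathbf{0}\le M^k\mathbf{1}\le \mathbf{1}$ of Lemma \ref{lemma:413}, this suffices for both claims via a standard Neumann-series argument.

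For the first claim, set $\alpha \coloneqq ||M^n||<1$ and, for any $k\in\N$, write $k=qn+r$ with $0\le r<n$. Then submultiplicativity of the operator norm combined with Lemma \ref{lemma:413} gives
$$
||M^k\mathbf{1}||_\infty = ||M^{qn}(M^r\mathbf{1})||_\infty \le ||M^n||^q\,||M^r\mathbf{1}||_\infty \le \alpha^q.
$$
As $k\to\infty$ we have $q\to\infty$, so $||M^k\mathbf{1}||_\infty\to 0$; combined with the lower bound $M^k\mathbf{1}\ge\mathbf{0}$ from Lemma \ref{lemma:413}, this yields $\lim_{k\to\infty}M^k\mathbf{1}=\mathbf{0}$.

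For the second claim, the same division-with-remainder argument gives $||M^k||\le \alpha^{\lfloor k/n\rfloor}\cdot C$, where $C\coloneqq \max_{0\le r<n}||M^r||<\infty$, so $||M^k||\to 0$ in operator norm. Hence the partial sums of $\sum_{k\ge 0} M^k$ form a Cauchy sequence and converge to some operator $S$. Passing to the limit in the telescoping identity $(I-M)\sum_{k=0}^N M^k = I-M^{N+1}$ yields $(I-M)S = I$, and since $I-M$ acts on the finite-dimensional space $\R^{A^c\setminus\SSS}$, right-invertibility implies invertibility, so $(I-M)^{-1}=S=\sum_{k=0}^\infty M^k$.

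There is no real obstacle; the argument is essentially bookkeeping, lifting the single-power bound $||M^n||<1$ supplied by Corollary \ref{cor:416} to geometric decay of all powers, cleanly handled by the division-with-remainder step. The remainder is the textbook Neumann-series construction specialised to the finite-dimensional setting, where right-invertibility alone suffices.
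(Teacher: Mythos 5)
Your proof is correct and follows exactly the same route as the paper's: both invoke Corollary \ref{cor:416} to obtain $||M^n||<1$ and then run the Neumann-series argument, with the paper merely sketching this step and deferring the details to Kemeny and Snell. You have filled in precisely those deferred details (the division-with-remainder bound, the Cauchy argument, and the telescoping identity), so there is no substantive difference in approach.
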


\begin{proof}
By Corollary \ref{cor:416}, there exists some \(n\in\N\) such that \(||(T|_{A^c\setminus \SSS})^n|| < 1\). This means that the restriction of $T$ to $A^c\setminus \SSS$ eventually becomes a contraction and the probability of remaining there decays to zero. 
Hence, the Neumann series $\sum_{k=0}^\infty (T|_{A^c\setminus \SSS})^k$ converges and the result follows~(Kemeny and Snell~\cite[{Theorem 1.11.1}]{kemeny1976finite}).
\end{proof}

The next proposition gives a closed-form expression for the vector of hitting probabilities of $\PP_T$.
\begin{prop}\label{prop:418}
Let \(\PP_T\) be a homogeneous Markov chain  with transition matrix \(T\). 
Then \(p^T(x)=1\) for all \(x\in A\) and \(p^T(x)=0\) if and only if \(x\in \C_T\). The vector \(p^T|_{A^c\setminus \C_T}\) is given by
\begin{equation}
 p^T|_{A^c\setminus \C_T} = \bigl(I-T|_{A^c\setminus \C_T}\bigr)^{-1}(T\one_A)|_{A^c\setminus \C_T},   
\end{equation}
and is the unique solution to 
\begin{equation}
    p^T = \one_A + \one_{A^c\setminus \C_T} \cdot Tp^T. \label{intel1}
\end{equation}
\end{prop}

\begin{proof}
The first two claims directly follow from Proposition \ref{prop:hitprob}. Restricting the system in Equation \eqref{eq:hitprob} to \(A^c\setminus \C_T\) yields
\[
p^T|_{A^c\setminus \C_T} = (T p^T)|_{A^c\setminus \C_T}.
\]
By Lemma \ref{lemma:rewritehitprob}, we find that
\[
p^T|_{A^c\setminus \C_T} = \bigl(T|_{A^c\setminus \C_T} p^T|_{A^c\setminus \C_T}\bigr) + (T\one_A)|_{A^c\setminus \C_T}.
\]
Hence,
\[
p^T|_{A^c\setminus \C_T} = \bigl(I-T|_{A^c\setminus \C_T}\bigr)^{-1}(T\one_A)|_{A^c\setminus \C_T},
\]
where we applied Lemma \ref{lemma:417} to guarantee the existence of the inverse. It then follows that $p^T$ is the unique solution to \eqref{intel1}
\end{proof}
The last lemma we present in this section is required for results in the imprecise setting later on. 
Given a transition matrix $T$, consider the induced directed weighted graph containing as vertices the states of \(\X\) and directed edges with weight \(T(x,y)\) for all \(x,y\in\X\). Then, the following lemma establishes that, from any state \(x\in\X\) which can reach \(A\), there must be a (directed) simple path from \(x\) to \(A\) using only nonzero weight edges for which the hitting probabilities of the states along the way are nondecreasing as we approach \(A\).
Essentially, this says that from every state \(x\in A^c\setminus \C_T\), there must be a way to “get closer” (i.e. increase the hitting probability) to \(A\). 

\begin{lemma}\label{lemma:419}
Consider a homogeneous Markov chain \(\PP_T\) with transition matrix \(T\) and let \(p^T \) be its vector of hitting probabilities. For all $x\in A^c\setminus \C_T$,
there exists a simple path
\(
x = x_0, x_1, \dots, x_n = z\in A
\)
with \(T(x_{k},x_{k+1})>0\) and 
\begin{equation}
   p^T(x_{k}) \le p^T(x_{k+1}) \quad \text{for all } k\in\{0,\dots,n-1\}. 
\end{equation}
\end{lemma}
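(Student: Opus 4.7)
The plan is to exploit the harmonic property of $p^T$ together with a maximum-principle argument. Recall that by Proposition \ref{prop:hitprob}, for every $y\in A^c$ we have
\[
p^T(y) \;=\; \sum_{z\in\X} T(y,z)\, p^T(z),
\]
so $p^T(y)$ is a convex combination of the values $p^T(z)$ at $T$-neighbors of $y$. Consequently, for every $y\in A^c$ there must exist at least one $z$ with $T(y,z)>0$ and $p^T(z)\ge p^T(y)$. This suggests defining the \emph{ascending subgraph} $G$ on $\X$ whose directed edges are those pairs $(y,z)$ with $T(y,z)>0$ and $p^T(z)\ge p^T(y)$. The statement then reduces to showing that from any $x\in A^c\setminus\C_T$ there is a $G$-walk to some state in $A$, after which a standard shortcutting argument yields a simple such path.

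The main step is to prove reachability to $A$ in $G$. Let $R\subseteq\X$ be the set of states reachable from $x$ by a walk in $G$. Since $p^T$ is nondecreasing along $G$-walks and $p^T(x)>0$ (as $x\notin\C_T$), we have $p^T(w)>0$ and hence $w\notin\C_T$ for every $w\in R$. Suppose for contradiction that $R\cap A=\emptyset$; then $R\subseteq A^c\setminus\C_T$. Pick $y^*\in R$ maximising $p^T$ over $R$, which is finite, and set $M\coloneqq p^T(y^*)$. For every $T$-neighbor $z$ of $y^*$, either $(y^*,z)\in G$, in which case $z\in R$ and thus $p^T(z)\le M$ by maximality, or $(y^*,z)\notin G$, in which case $p^T(z)<M$ by definition of $G$. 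Applying the harmonic identity at $y^*\in A^c$ then forces $p^T(z)=M$ for every such $z$, and in particular every $T$-neighbor of $y^*$ lies in $R$ via a $G$-edge.

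Iterating this observation to every state in $R'\coloneqq\{w\in R : p^T(w)=M\}$ shows that $R'$ is $T$-closed: from any $w\in R'$, every $T$-transition lands again in $R'$. But then starting the homogeneous Markov chain $\PP_T$ at any $w\in R'$ the process stays in $R'\subseteq A^c$ forever, so such $w$ cannot reach $A$ in the sense of Definition \ref{def:reach}, i.e.\ $R'\subseteq\C_T$. This contradicts $R'\subseteq R\subseteq A^c\setminus\C_T$, so the assumption $R\cap A=\emptyset$ fails and there exists a $G$-walk $x=x_0,x_1,\dots,x_n=z\in A$ with $T(x_k,x_{k+1})>0$ and $p^T(x_k)\le p^T(x_{k+1})$ at every step.

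The finishing step is to turn this walk into a \emph{simple} path: whenever a state repeats, say $x_i=x_j$ with $i<j$, delete $x_{i+1},\dots,x_j$. The resulting sequence is still a $G$-walk, so the positivity of transition probabilities and the monotonicity of $p^T$ along the edges are preserved, and all vertices are distinct. The expected main obstacle is the maximum-principle step: one must be careful that neighbors of $y^*$ lying \emph{outside} $R$ are excluded by the strict inequality $p^T(z)<M$, which is exactly what justifies transferring the equality $p^T(z)=M$ from the harmonic identity to every $T$-neighbor and thereby guarantees the closedness of $R'$.
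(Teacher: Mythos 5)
Your proof is correct, and it takes a genuinely different route from the paper's. The paper argues constructively and iteratively: starting at level $c = p^T(x)$, it looks at the $T$-reachable states at exactly that level, shows via the harmonic identity that this level set cannot be $T$-closed (since that would force its members into $\C_T$), escapes to a strictly higher level, and repeats until $A$ is hit, appealing to the finiteness of $\X$ to terminate. Your argument is instead a one-shot maximum-principle proof by contradiction: you define the ascending subgraph $G$, take the entire $G$-reachable set $R$, look at the set $R'$ of maximisers of $p^T$ over $R$, and use the harmonic identity to show $R'$ is $T$-closed, hence $R' \subseteq \C_T$, contradicting $R' \subseteq R \subseteq A^c \setminus \C_T$. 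The paper's version builds the required path explicitly level by level, which may be pedagogically preferable; your version is shorter and avoids the (somewhat delicate) bookkeeping the paper needs to ensure that the sub-path within a single level can be chained with an escape edge. Both rely on the same two ingredients — the harmonic identity $p^T = T p^T$ on $A^c$, and the fact that a $T$-closed subset of $A^c$ lies in $\C_T$ — so the mathematics is compatible, but the organisation of the argument and the objects you introduce ($G$, $R$, $R'$ versus the level set $C$) are genuinely different. Your final shortcutting step to make the walk simple is also fine and matches what the paper implicitly relies on.
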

The proof of this lemma can be found in the appendix.

\subsection{Imprecise Markov Chains}
Instead of a single stochastic process \(\PP\), we now consider a set of stochastic processes \(\Pcal\). If these processes are Markov chains, it is said to be an \textit{imprecise Markov chain} (IMC)~\cite{DeCooman2009, HermansSkulj2014, hartfiel_seneta_1994}. An interpretation may be that we do not know precisely which \(\PP \in \Pcal\) is the true process so that the set \(\Pcal\) serves as an “error margin” and, we hope, contains the true process \(\PP\) being modelled as well. Inferences with respect to the imprecise Markov chain \(\Pcal\) are defined using lower and upper expectations, which are defined as
\[
\underline{\EE}_\Pcal[\cdot\mid\cdot] \coloneqq \inf_{\PP\in \Pcal} \EE_\PP[\cdot\mid\cdot] \quad \text{and} \quad \overline{\EE}_\Pcal[\cdot\mid\cdot] \coloneqq \sup_{\PP\in \Pcal} \EE_\PP[\cdot\mid\cdot].
\]
These inferences are conservative, tight lower and upper bounds on inferences with respect to all stochastic processes in the set \(\Pcal\). Lower and upper probabilities, $\underline{\PP}_\Pcal$ and $\overline{\PP}_\Pcal$ respectively, are a special case of lower and upper expectations:
\[
\underline{\PP}_\Pcal(E\mid\cdot) \coloneqq \underline{\EE}_\Pcal[\one_E\mid\cdot] \quad \text{and} \quad \overline{\PP}_\Pcal(E\mid\cdot) \coloneqq \overline{\EE}_\Pcal[\one_E\mid\cdot],
\]
for all events $E\in \F$.

The set \(\Pcal\) representing the IMC will be parametrised through a nonempty set of transition matrices \(\T\). In this paper, we focus on (time-)homogeneous Markov chains  $\PP_T\in \Pcal$ that have their transition matrix $T\in \T$. This choice is justified by Krak et al.~\cite[{Theorem 18}]{krak2019hitting} showing that lower (and upper) hitting probabilities remain unchanged even when the set $\Pcal$ is enlarged to include (time-)inhomogeneous Markov processes compatible with $\T$.

Given a target set $A\subset \X$, we define the \textit{lower and upper hitting probabilities} of the imprecise Markov chain parametrised by $\T$ as 
\[
\ulp^\T \coloneqq \inf_{T\in \T} p^T \quad \text{and} \quad \olp^\T \coloneqq \sup_{T\in \T} p^T,
\]
where the infimum and supremum are taken pointwise.
Thus, we have  
\[
\ulp^\T(x) =  \underline{\PP}_{\Pcal}\Big(\exists n\in \N: X_n \in A \mid X_0 = x\Big),
\]
and
\[
\olp^\T(x) = \overline{\PP}_{\Pcal}\Big(\exists n\in \N: X_n \in A\mid X_0 = x\Big).
\]

Throughout the paper, we assume that the set $\T$ is nonempty, compact, convex, and with separately specified rows (SSR)~\cite{HermansSkulj2014, krak2019hitting}. This last property means that $\T$ can be represented as $N=|\X|$ compact and convex sets of probability distributions $\{\T_x\}_{x\in\X}$, one for each state $x\in\X$, so that $\T$ is the Cartesian product of these sets. Each of these sets is often called \textit{credal set}.
For such a set of transition matrices $\T$, we introduce valuable computational tools in its associated lower and upper transition operators.
\begin{defn}[\cite{krak2019hitting}] \label{defn:lowoper42}
    Given a set of transition matrices \(\T\) on $\X$, the corresponding lower and upper transition operators \(\underlineT,\overlineT: \R^\X \to \R^\X\) are defined as
\[
[\underlineT f](x) \coloneqq \inf_{T\in\T} [Tf](x) \quad \text{and} \quad [\overlineT f](x) \coloneqq \sup_{T\in\T} [Tf](x),
\]
for all \(f\in \R^{\X}\) and all \(x\in \X\).
\end{defn} 
From this definition, it follows that 
\[
[\underlineT f](x) = \underline{\EE}_{\Pcal_{\T}}[f(X_{n+1})\mid X_n=x] \quad \text{and} \quad [\overlineT f](x) = \overline{\EE}_{\Pcal_{\T}}[f(X_{n+1})\mid X_n=x],
\]
where $\Pcal_\T$ is the set of all homogeneous Markov chains with transition matrix in $\T$.
The following proposition establishes that these infima and suprema are always attained.
\begin{prop}[{\cite[{Lemma 1}]{krak2019hitting}}]
\label{prop:225}
    Let $\T$ be a set of transition matrices on $\X$. Then, for all $f\in \R^{\X}$,  there exists an extreme point \(T\in\T\) such that \(Tf =\underlineT f\), and an extreme point \(T'\in\T\) such that \(T'f = \overlineT f\).
\end{prop}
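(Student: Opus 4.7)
The plan is to exploit the separately specified rows (SSR) structure to reduce the statement to a row-by-row claim, then to invoke standard extreme-point arguments on each row.

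First, because $\T$ has SSR, I can identify $\T$ with the Cartesian product $\prod_{x\in\X}\T_x$, where each $\T_x\subseteq\R^{\X}$ is a compact, convex credal set of probability mass functions. Since $[Tf](x)=\sum_{y\in\X}T(x,y)f(y)$ depends only on the $x$-th row $T(x,\cdot)$, the infimum defining the lower transition operator decouples as
\[
[\underlineT f](x)\;=\;\inf_{T\in\T}[Tf](x)\;=\;\inf_{p\in\T_x}\langle p,f\rangle.
\]
This reduces the task to showing that for each fixed $x\in\X$, the linear functional $p\mapsto\langle p,f\rangle$ attains its minimum on $\T_x$ at an extreme point of $\T_x$.

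Next, for each $x$ define the set of minimisers $F_x\coloneqq\{p\in\T_x : \langle p,f\rangle=[\underlineT f](x)\}$. By compactness of $\T_x$ and continuity of $p\mapsto\langle p,f\rangle$, the infimum is attained, so $F_x$ is nonempty; it is also convex and compact, being the intersection of $\T_x$ with a supporting hyperplane. By the Krein--Milman theorem (applicable since we are in finite dimensions), $F_x$ has at least one extreme point $p_x^{*}$. Moreover, any extreme point of an exposed face of a convex set is an extreme point of the entire set, so $p_x^{*}\in\mathrm{ext}(\T_x)$.

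I then assemble $T^{*}\in\R^{\X\times\X}$ by setting $T^{*}(x,\cdot)\coloneqq p_x^{*}$ for every $x\in\X$. By SSR, $T^{*}\in\T$, and by construction $[T^{*}f](x)=\langle p_x^{*},f\rangle=[\underlineT f](x)$ for all $x$, i.e.\ $T^{*}f=\underlineT f$. To verify $T^{*}$ is extreme in $\T$, I use the standard fact that in a Cartesian product of compact convex sets the extreme points are exactly the tuples of extreme points of the factors: if $T^{*}=\lambda T^{(1)}+(1-\lambda)T^{(2)}$ with $T^{(i)}\in\T$ and $0<\lambda<1$, then each row satisfies $p_x^{*}=\lambda T^{(1)}(x,\cdot)+(1-\lambda)T^{(2)}(x,\cdot)$, and extremality of $p_x^{*}$ in $\T_x$ forces $T^{(1)}(x,\cdot)=T^{(2)}(x,\cdot)=p_x^{*}$ for every $x$, so $T^{(1)}=T^{(2)}=T^{*}$. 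Finally, the corresponding statement for $\overlineT$ follows by the conjugacy identity \eqref{eq: conjugancy}: apply the argument above to $-f$ to obtain an extreme $T'\in\T$ with $T'(-f)=\underlineT(-f)=-\overlineT f$, whence $T'f=\overlineT f$. The one step that requires care is the passage from ``minimum attained on $\T_x$'' to ``minimum attained at an extreme point of $\T_x$'', which is where the face/Krein--Milman argument is essential; everything else is bookkeeping enabled by the SSR assumption.
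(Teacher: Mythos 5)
Your proof is correct and complete. One thing to be aware of: the paper itself does not prove this proposition---it is stated as a cited result from Krak et al.\ (2019)---so there is no internal proof to compare against; what you have produced is a self-contained, standard argument for that cited fact. Each step checks out: the SSR assumption does decouple $[\underlineT f](x)=\inf_{p\in\T_x}\langle p,f\rangle$ row by row; the set of minimisers is a nonempty compact convex exposed face by continuity and compactness; Minkowski's theorem (the finite-dimensional Krein--Milman) gives an extreme point of that face, and the general fact that an extreme point of a face of a convex set is an extreme point of the ambient set promotes it to an extreme point of $\T_x$; the characterisation $\mathrm{ext}(\prod_x\T_x)=\prod_x\mathrm{ext}(\T_x)$ is exactly right and is what makes the assembled $T^{*}$ an extreme point of $\T$; and the conjugacy $\overlineT f=-\underlineT(-f)$ reduces the upper case to the lower one. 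The step you flag as delicate---passing from the exposed face to an extreme point of $\T_x$---is handled correctly and is indeed the only place where a genuine convexity fact beyond linearity is used.
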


Lower and upper transition operators are conjugate, meaning that the lower transition operator \(\underlineT\) induces a corresponding upper transition operator
\begin{equation} \label{eq: conjugancy}
    \overlineT(\cdot) = -\underlineT(- \ \cdot ),
\end{equation}
and vice-versa. We now list some important properties of lower and upper transition operators. 
\begin{lemma}[\cite{DeCooman2009,krak2019hitting, krak2021comphit}] \label{lemma:proptransop}
    Given any lower transition operator \(\underlineT\) and its conjugate upper transition operator \(\overlineT\), for all \(f, g \in \R^{\X}\) and all \(n \in \N\) it holds that
\begin{enumerate}
    \item \(f \le g \Rightarrow {\underlineT^n} f \le {\underlineT^n} g\) and \({\overlineT^n} f \le {\overlineT^n} g,\)
    \item \({\underlineT^n} f + {\underlineT^n} g \le {\underlineT^n} (f+g)\) and \(\overlineT^{n}(f+g) \le \overlineT^{n} f + \overlineT^{n} g,\)
    \item \(||{\underlineT^n} f - {\underlineT^n} g|| \le ||f-g||\) and \(||\overlineT^{n} f - \overlineT^{n} g|| \le ||f-g||\),
    \item \(\min\limits_{y\in \X} f(y) \le [\underlineT f](x)\) and \([\overlineT f](x) \le \max\limits_{y\in \X} f(y),\)
    \item \(\underlineT(\lambda f) = \lambda \underlineT f\) and \(\overlineT(\lambda f) = \lambda \overlineT f\) for all $\lambda \geq 0$.
\end{enumerate}
\end{lemma} 
The singleton set $\T=\{T\}$ for a certain transition matrix $T$ has $\underlineT=\overlineT=T$. Thus, all these properties derived for lower and upper transition operators also hold for transition matrices.

\section{Characterization of Lower and Upper Hitting Probabilities} \label{sec:charac}
In this section, we characterise lower and upper hitting probabilities as the unique solutions of nonlinear fixed-point equations. We first formalise the notion of reachability in the imprecise setting.
\subsection{Lower and Upper Reachability} \label{subsec:reachability}
Before characterising lower and upper hitting probabilities, we identify the states that can reach the target set with positive lower (resp. upper) probability.
Recall that in the precise setting, for a homogeneous Markov chain with transition matrix \(T\), we focused on the set
of states \(A^c \setminus \C_T\). This was done because Proposition \ref{prop:hitprob} directly yields the hitting probabilities for other states, with the hitting probability equal to one for states in \(A\) and equal to zero for states in \(\C_T\). Clearly, the result for the states in \(A\) carries over to the imprecise setting, as \(p^T(x)=1\) for all \(T\in \T\) and all \(x\in A\). Hence also \(\ulp^\T(x)=\olp^\T(x)=1\) for all \(x\in A\).
To characterise the states with zero lower and upper hitting probability, we need suitable definitions of reachability for the imprecise setting. 
\begin{defn} [Upper reachability $\operatorname{\mathbf{(UR0)}}$]\label{def:uppereach}
    Given a set of transition matrices $\T$ and a set of states $\D\subseteq \X$ we say that $x$ upper reaches $\D$ if there exists $n\in \N_0$ and $T\in \T$ such that $[T^n \one_\D](x)>0$.
We denote by $x\uppereach \D$ if $x$ upper reaches $\D$.
\end{defn}
In the literature~\cite{ DeCooman2009,hermans2012characterisation, sangalli2025meeting} another definition often appears, namely 
\begin{description}
    \item[(UR1):] $x$ upper reaches $\D$ if there exists $n\in \N_0$ such that $[\overlineT^n\one_{\D}](x)>0$.
\end{description}
We show that these two definitions are equivalent.
\begin{theorem}\label{th:equivalenceURdefns}
     Definition $\operatorname{(UR0)}$ and definition $\operatorname{(UR1)}$ are equivalent.
\end{theorem}
\begin{proof}
 $\operatorname{(UR0)}\Rightarrow \operatorname{(UR1)}:$ This implication follows directly from the fact that
\begin{align}
    [\overlineT^n\one_{\D}](x)\coloneqq\sup_{T_1,\dots, T_n\in \T} [T_1\cdots T_n \one_\D](x).
\end{align}
Thus, if there exist $n\in \N_0$ and $\hat{T}\in \T$ such that $[\hat{T}^n \one_\D](x)>0$ (definition (UR0)), then
\begin{align*}
    [\overlineT^n\one_{\D}](x)&=\sup_{T_1,\dots, T_n\in \T} [T_1\cdots T_n \one_\D](x)\\
    &\ge \sup_{T\in \T} [T^n\one_\D](x)\\
    &\ge [\hat{T}^n\one_\D](x) >0.
\end{align*}
$\operatorname{(UR1)}\Rightarrow \operatorname{(UR0)}:$ Now suppose that there exists $n\in \N_0$ such that $ [\overlineT^n\one_{\D}](x)>0$.
Since $\T$ is compact, this supremum is attained, i.e. there exist $T_1,\dots, T_n\in \T$ such that
\[
[T_1\cdots T_n\one_\D](x)=[\overlineT^n\one_{\D}](x)>0.
\]
From this, we get 
\[
[T_1\cdots T_n\one_\D](x) = \sum_{\substack{z_1,\dots, z_{n-1}\in \X \\ y\in \D}} T_1(x,z_1)\cdots T_n(z_{n-1},y)>0,
\]
hence there exists a sequence $(\hat{z}_k)_{k=1}^n\coloneqq\hat{z}_0,\hat{z}_1, \dots, \hat{z}_{n-1},\hat{z}_n$, with $x=\hat{z}_0$ and $y=\hat{z}_n$, such that 
\[
T_1(x,\hat{z}_1)\cdots T_n(\hat{z}_{n-1},y)>0.
\]
We now construct a matrix $\Tilde{T}$ row-wise using convexity and the SSR property. For all $z\in \X$ define
\begin{equation*}
\Tilde{T}(z,\cdot)\coloneqq
\begin{cases}
\frac{1}{|\{\hat{z}_k=z\}|}\sum_{\hat{z}_k=z} T_{k+1}(\hat{z}_{k},\hat{z}_{k+1})& \text{if $z\in (\hat{z}_k)_k$ }\\
S(z,\cdot) \text{ for an arbitrary $S\in \T$} & \text{if $ z\notin (\hat{z}_k)_k$}.
\end{cases}
\end{equation*}
By convexity and SSR, $\Tilde{T}\in \T$, and $\Tilde{T}(z,w)>0$ for every pair of consecutive states $z,w\in \X$ in the sequence $(\hat{z}_k)_k$.   
It follows that 
\begin{align*}
    [\Tilde{T}^n \one_{\D}](x) &= \sum_{\substack{z_1,\dots, z_{n-1}\in \X \\ y\in \D}} \Tilde{T}(x,z_1)\cdots \Tilde{T}(z_{n-1},y)\\
    &\ge \Tilde{T}(x,\hat{z}_1)\cdots \Tilde{T}(\hat{z}_{n-1},y)>0,
\end{align*}
which is what we wanted to prove.
\end{proof}

\begin{defn} [Lower reachability $\operatorname{\mathbf{(LR0)}}$] \label{def:lowereach}
    Given a set of transition matrices $\T$ and a set of states $\D\subseteq \X$ we say that $x$ {lower reaches} $\D$ if for all $T\in \T$ there exists $n\in \N_0$ such that $[T^n\one_\D](x)>0$. 
We denote by $x\lowereach \D$ if $x$ lower reaches $\D$.
\end{defn}
We present three alternative natural definitions of lower reachability that often appear in the literature~\cite{debock2025convergent, sangalli2025meeting, debock2017imprecise, erreygers2023}.
\begin{description}
    \item[(LR1):] $x$ lower reaches $\D$ if $x\in \D_{k^*}$ where $(\D_k)_{k\in \N_0}$ is a nondecreasing sequence given by $\D_0=\D$ and 
    \begin{equation*}
        \D_{k+1}=\D_k\cup \{z\in \X\setminus \D_k \mid [\underlineT\one_{\D_k}](z)>0\},
    \end{equation*}
    and where $k^*$ is the first index for which $\D_{k^*}=\D_{k^*+1}$.~\cite{erreygers2023, debock2017imprecise}
    \item[(LR2):] $x$ lower reaches $\D$ if there exists $n\in \N_0$ such that for all $T\in \T$ it holds that $[T^n\one_\D](x)>0$.
    \item[(LR3):] $x$ lower reaches $\D$ if $[\underlineT^n \one_\D](x)>0$ for some $n\in \N_0$.~\cite{debock2025convergent, sangalli2025meeting} 
\end{description}
One reason for this abundance of possible definitions of lower reachability is that the quantifiers do not commute, i.e. in general 
\begin{equation*}
    \text{``$\exists n\in \N_0:\forall T\in \T, \text{(property)} $'' and ``$\forall T\in \T,\exists n\in \N_0: \text{(property)}$''}
\end{equation*}
are not equivalent.
Conversely, for upper reachability the quantifiers ``$\exists n\in \N_0$'' and ``$\exists T\in \T$'' commute: each formulation just requires the existence of at least one suitable pair $(n,T)\in \N_0\times \T$.

\begin{theorem}
   Definition $\operatorname{(LR0)}$ and definition $\operatorname{(LR1)}$ are equivalent.
\end{theorem}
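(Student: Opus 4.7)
The plan is to show the two implications separately, with the forward direction (LR1) $\Rightarrow$ (LR0) proceeding by a straightforward induction along the layers $\D_k$, and the reverse (LR0) $\Rightarrow$ (LR1) obtained by contrapositive using the SSR assumption on $\T$ to construct a single offending transition matrix.

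For (LR1) $\Rightarrow$ (LR0): Suppose $x\in \D_{k^*}$; let $k(x)$ be the smallest index with $x\in \D_{k(x)}$. I will prove by induction on $k(x)$ that for every $T\in \T$ there exists $n\in\N_0$ with $[T^n\one_\D](x)>0$. The base case $k(x)=0$ is immediate since $\D_0=\D$ and $[T^0\one_\D](x)=\one_\D(x)=1$. For the inductive step, if $k(x)=k+1$, then $x\in \D_{k+1}\setminus \D_k$, so by definition $[\underlineT \one_{\D_k}](x)>0$; hence for every $T\in\T$ we have $[T\one_{\D_k}](x)>0$, which forces $T(x,y)>0$ for some $y\in \D_k$. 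Applying the inductive hypothesis to $y$ with this same $T$ gives $m\in \N_0$ with $[T^m\one_\D](y)>0$, and then
\[
[T^{m+1}\one_\D](x) \;\ge\; T(x,y)\,[T^m\one_\D](y) \;>\; 0,
\]
finishing the induction.

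For (LR0) $\Rightarrow$ (LR1): I will show the contrapositive. Suppose $x\notin \D_{k^*}$, and let $\SSS \coloneqq \X\setminus \D_{k^*}$. Stability of the iteration says $\D_{k^*+1}=\D_{k^*}$, so for every $y\in \SSS$ we have $[\underlineT \one_{\D_{k^*}}](y)=0$. Because each credal set $\T_y$ is compact and the evaluation $T(y,\cdot)\mapsto \sum_{z\in \D_{k^*}} T(y,z)$ is linear, this infimum is attained: there exists $T_y\in\T$ with $\sum_{z\in \D_{k^*}} T_y(y,z)=0$, i.e.\ the row $T_y(y,\cdot)$ places all its mass on $\SSS$. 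Invoking the SSR assumption, I can assemble a single transition matrix $T\in \T$ whose $y$-row equals $T_y(y,\cdot)$ for every $y\in \SSS$ (and arbitrary for $y\in \D_{k^*}$). Under this $T$, the set $\SSS$ is absorbing, so by induction $T^n(x,\cdot)$ is supported in $\SSS$ for all $n\in\N_0$; since $\D\subseteq \D_0\subseteq \D_{k^*}$ is disjoint from $\SSS$, we obtain $[T^n\one_\D](x)=0$ for all $n$, contradicting (LR0).

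The main obstacle is the reverse implication, specifically the step where one must promote the pointwise choices $T_y\in\T$ (one witness per state $y\in\SSS$) into a single global $T\in\T$ for which $\SSS$ is absorbing. This is precisely where the separately specified rows hypothesis is essential: without SSR, the existence of the stuck row-witnesses $T_y$ would not guarantee that their combination lies in $\T$, and the argument would fail. The forward direction is essentially a bookkeeping exercise in propagating positivity along edges, and I expect it to raise no conceptual difficulty.
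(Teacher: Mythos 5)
Your proof is correct, and the reverse direction $\operatorname{(LR0)}\Rightarrow\operatorname{(LR1)}$ takes a genuinely cleaner route than the paper's. The forward direction $\operatorname{(LR1)}\Rightarrow\operatorname{(LR0)}$ is essentially the same argument as the paper's: a strong induction on the first layer index $k(x)$, using the fact that $[\underlineT\one_{\D_k}](x)>0$ forces every $T\in\T$ to have a positive transition from $x$ into $\D_k$, and then chaining with the inductive hypothesis applied to that target state under the same $T$. For the reverse direction the two proofs diverge substantially. The paper argues by contradiction through an iterative path-modification scheme: starting from some $\tilde T$ with $[\tilde T\one_{\D_{k^*}}](x)=0$, it repeatedly extracts simple paths to $\D$ (invoking Lemma~\ref{lemma:419} to get paths along which hitting probabilities are nondecreasing), zeroes out rows one state at a time, and argues that the finiteness of $\X$ forces the process to terminate in a contradiction. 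Your argument compresses all of this into a single global construction: stabilisation $\D_{k^*+1}=\D_{k^*}$ gives $[\underlineT\one_{\D_{k^*}}](y)=0$ for every $y\in\SSS:=\X\setminus\D_{k^*}$; compactness of each credal set $\T_y$ shows the infimum is attained by some row supported entirely on $\SSS$; and SSR lets you assemble those rows into one matrix $T\in\T$ under which $\SSS$ is closed, so $x\in\SSS$ cannot reach $\D\subseteq\D_{k^*}$ under $T$, contradicting $\operatorname{(LR0)}$. This avoids the path bookkeeping entirely, dispenses with the auxiliary lemma about hitting-probability monotone paths, and makes the precise role of the SSR and compactness hypotheses more transparent.
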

\begin{proof}
The set $\E\subseteq \X $ of states that lower reach $\D$ can be seen as
\begin{align*} 
 \E&\coloneqq \{x\in \X \mid x \lowereach \D \}\\
&\ = \{x\in \X \mid \forall T\in \T, \exists n\in \N_0: [T^n\one_{\D}](x)>0\}\\
&\ =\bigcup_{k\in \N_{0}} \{x\in \X \mid \forall T\in \T, \exists n\le k: [T^n\one_{\D}](x)>0\}=:\bigcup_{k\in \N_{0}}  \E_k.
\end{align*}
To prove the equivalence $\operatorname{(LR0)}\Leftrightarrow \operatorname{(LR1)}$ it suffices to show that $\D_k=\E_k$ for all $k\in \N_0$. 

We prove this by induction on $k$. The induction base $k=0$ is trivial since $\D_0=\D=\E_0$. Assuming that $\D_k=\E_k$ we show that $\D_{k+1}=\E_{k+1}$, and to do so we prove the two inclusions,  $\D_{k+1}\subseteq \E_{k+1}$ and $\E_{k+1}\subseteq \D_{k+1}$. 

First, we observe that $(\D_k)$ and $(\E_k)$ are sequences of nested sets, namely $\D_k\subseteq \D_{k+1}$ and $\E_k\subseteq \E_{k+1}$. To show that $\D_{k+1}\subseteq \E_{k+1}$, we note that for all $x\in \D_{k+1}\setminus \D_k$, we have $[\underlineT \one_{\D_k}](x)>0$. Therefore, for all $T\in \T$ there is a state $y=y(T)\in \D_k$ such that $T(x,y)>0$, and, since $\E_k=\D_k \ni y$, there exists $n\le k$ such that $[T^n\one_\D](y)>0$. It follows that
\[
[T^{n+1}\one_\D](x)\ge T(x,y)[T^n\one_\D](y)>0,
\]
thus $x\in \E_{k+1}$, proving that $\D_{k+1}\subseteq \E_{k+1}$. 

To show that $\E_{k+1}\subseteq \D_{k+1}$, it suffices to prove that if $x\notin \D_{k+1}$ then $x\notin \E_{k+1}$, namely there exists $\hat{T}\in \T$ such that $[\hat{T}^{n+1}\one_\D](x)=0$ for all $n\le k$. We construct the matrix $\hat{T}$ row-wise starting from $x$: since $x\notin \D_{k+1}$, there exists a matrix $S_x\in \T$ such that $[S_x\one_{\D_{k}}](x)=0$, and we set $\hat{T}(x,\cdot)= S_x(x,\cdot)$. Observe that
\begin{align*}
    [\hat{T}^{n+1}\one_\D](x)&=\sum_{y\in \D_k} \overset{=0}{\cancel{\hat{T}(x,y)}}[\hat{T}^n\one_\D](y)+ \sum_{z\notin \D_k} \hat{T}(x,z)[\hat{T}^n\one_\D](z)\\
    &=\sum_{z\notin \D_k} \hat{T}(x,z)[\hat{T}^n\one_\D](z),
\end{align*}
for all $n\le k$.
Now, for all $z\notin \D_{k}=\E_k$, there exists $S_z\in \T$ such that $[S_z^n\one_\D](z)=0$ for all $n\le k$. By setting $\hat{T}(z,\cdot)=S_z(z,\cdot)$, for all $z\notin \D_k$ we get 
\[
[\hat{T}^{n+1}\one_\D](x)=\sum_{z\notin \D_k} \hat{T}(x,z)\overset{=0}{\cancel{[\hat{T}^n\one_\D](z)}}=0,
\]
which is what we wanted to prove.
\end{proof}
In the rest of the paper we use (LR0) (Definition \ref{def:lowereach}) (or the equivalent (LR1)) to define lower reachability. (LR0) is weaker than (LR2) and (LR3), as stated in the following proposition.
\begin{prop}\label{prop:chainofequiv}
    We have the following chain of implications: 
    \begin{equation}
    \operatorname{(LR3)} \Rightarrow \operatorname{(LR2)} \Rightarrow \operatorname{(LR1)} \Leftrightarrow \operatorname{(LR0)}.
\end{equation}
\end{prop}
\begin{proof}
    The implication $ \text{(LR3)} \Rightarrow \text{(LR2)}$ holds because if $[\underlineT^n \one_\D](x)>0$ for some $n\in \N_0$ then for all sequences of transition matrices $T_1, \dots, T_n\in \T$ we have $[T_1\cdots T_n\one_\D](x)>0$. It follows that $[T^n\one_\D](x)>0$ for all $T\in \T$.

    Now assume (LR2): there exists $n^*\in\mathbb N_0$ such that $[T^{n^*}\one_\D](x)>0$ for all $T\in\T$. Since (LR0) requires for all $T\in \T$ the existence of a $n=n(T)$ with the same property, the universal $n^*$ satisfies (LR0).
\end{proof}
The following examples show why these implications cannot be reversed.
\begin{ex}
    This example shows that $\operatorname{(LR0)}\not\Rightarrow \operatorname{(LR2)}$.
    Let $\X=\{1,2,3,4\}$ be the state space, and let $\{3\}$ be the target set. Let the set of transition matrices on $\X$ be
    \begin{equation*}
        \T=\left\{T=
        \begin{bmatrix}
            0 & p & 1-p&0\\
            0 & 0 & 1&0\\
            0 & 0 & 0&1\\
            0& 0& 0& 1
        \end{bmatrix}
        : p\in [0,1]
        \right\}.
    \end{equation*}
    The situation is represented by the following graph.
    \begin{center}
    \begin{tikzpicture}[
        xscale=1.8,yscale=1.8,
        every node/.style={draw=black,circle,fill=black!70,inner sep=2pt},
        redNode/.style={draw=red, fill=red!50},
        blueNode/.style={draw=blue, fill=blue!55},
        blackNode/.style={draw=black, fill=black},
        every label/.style={rectangle,fill=none,draw=none},
        every edge/.style={draw,bend right,looseness=0.3,
            postaction={
                decorate,
                decoration={
                    markings,
                    mark=at position 0.8 with {\arrow[#1]{Stealth}}
                }
            }
        }
    ]
        \node[blueNode,label=180:\textbf{1}] (1) at (2,55) {};
        \node[blueNode,label=90:\textbf{2}] (2) at (2.8,55.6) {};
       \node[blueNode,label=45:\textbf{3}] (3) at (3.6,55) {};
       \node[blueNode,label=90:\textbf{4}] (4) at (4.6,55) {};
        \path (1) edge[bend left=0, dashed] node[above, draw=none, fill=none, xshift=-2pt] {$p$} (2);
        \path (2) edge[bend left=0] (3);
        \path (1) edge[bend right, dashed] node[below,draw=none, fill=none, yshift=7pt] {$1-p$} (3);
         \path (3) edge[bend left=0] (4);
         \path (4) edge[relative=false,out=270,in=345,min distance=3.5ex] (4);
    \end{tikzpicture}
    \end{center}
    For all $T\in \T$ there exists $n\in \N$ (either 1 or 2) such that $T^n(1,3)>0$. Hence, $1$ lower reaches $3$, under $\operatorname{(LR0)}$. However, it is not true that there exists $n\in \N$ such that $T^n(1,3)>0$ for all $T\in \T$. In fact, the matrix $T_1\in \T$, defined by fixing $T_1(1,2)=p=1$, has $T_1^1(1,3)=0$, $T_1^2(1,3)=1$ and $T^n_1(1,3)=0$ for all $n\ge 3$. On the other hand, the matrix $T_0\in \T$, defined by fixing $T_0(1,2)=p=0$, has $T_0^1(1,3)=1$ and $T_0^n(1,3)=0$ for all $n\ge 2$. So $1$ does not lower reach $3$, under $\operatorname{(LR2)}$, because for all $n\in \N_0$ there exists $T\in \T$ such that $[T^n\one_{\{3\}}](1)=0$.
\end{ex}
\begin{ex}\label{ex:crisis}
    This example shows that $\operatorname{(LR2)}\not\Rightarrow \operatorname{(LR3)}$.
    Let $\X=\{1,2,3\}$ be the state space, and let $\{3\}$ be the target set. Let the set of transition matrices on $\X$ be
    \begin{equation*}
        \T=\left\{T=
        \begin{bmatrix}
            0 & 1-q & q\\
            0 & 0 & 1\\
            1 & 0 & 0\\
        \end{bmatrix}
        : q\in [0,1]
        \right\}.
    \end{equation*}
    The situation is represented by the following graph:
    \begin{center}
    \begin{tikzpicture}[
        xscale=1.8,yscale=1.8,
        every node/.style={draw=black,circle,fill=black!70,inner sep=2pt},
        redNode/.style={draw=red, fill=red!50},
        blueNode/.style={draw=blue, fill=blue!55},
        blackNode/.style={draw=black, fill=black},
        every label/.style={rectangle,fill=none,draw=none},
        every edge/.style={draw,bend right,looseness=0.3,
            postaction={
                decorate,
                decoration={
                    markings,
                    mark=at position 0.8 with {\arrow[#1]{Stealth}}
                }
            }
        }
    ]
        \node[blueNode,label=270:\textbf{1}] (1) at (2,54.4) {};
        \node[blueNode,label=0:\textbf{2}] (2) at (2.7,55.6) {};
       \node[blueNode,label=180:\textbf{3}] (3) at (1.3,55.6) {};
        \path (1) edge[bend left=0, dashed] node[right, draw=none, fill=none, xshift=4pt] {$1-q$} (2);
        \path (2) edge[bend left=0] (3);
        \path (1) edge[bend right, dashed] node[right,draw=none, fill=none, yshift=2.6pt] {$q$} (3);
        \path (3) edge[bend right] (1);
    \end{tikzpicture}
    \end{center}
    If $n=5$, all $T\in \T$ are such that $T^5(1,3)>0$, i.e. in five steps, whichever $T$ is selected, the process has a positive probability of being in state 3, if it has started in state 1. Therefore, 1 lower reaches 3, under $\operatorname{(LR2)}$.
    However, for all $n\in \N$ we have 
    \begin{equation*}
        [\underlineT^n \one_{\{3\}}](1)=0,
    \end{equation*}
    implying that  1 does not lower reach 3, under $\operatorname{(LR3)}$.
    This comes from the fact that, since we are applying the lower transition operator $\underlineT$ $n$ times, there is no guarantee that the same transition matrix achieves the minimum.
    Given $n\in \N$, we can find a sequence of $n$ transition matrices that minimises the probability of hitting state 3 in exactly $n$ steps, and we find that this probability is zero. 
   Given $n\in \N$, the sequence of transition matrices $(T_k)_{k=1}^n$ defined as
    \begin{equation*}
    T_k\coloneqq
        \begin{cases}
            S_0, & \text{if $k=n$},\\
            S_1, & \text{otherwise},
        \end{cases}
    \end{equation*}
    where $S_{0,1}\in \T$ are the two matrix defined by setting either $q=0$ or $q=1$, ensures to avoid state 3 at time $n$. Thus, 1 does not lower reach 3, under $\operatorname{(LR3)}$.
    To check this, one may also observe that $\underlineT\one_{\{3\}}=\one_{\{2\}}$ (the matrix $S_0$ achieves this minimum), $\underlineT^2\one_{\{3\}}=\underlineT\one_{\{2\}}=\mathbf{0}$ (the sequence $(S_1, S_0)$ achieves this minimum), thus 
    \[
    [\underlineT^{n+2}\one_{\{3\}}](1)=[\underlineT^{n+1}\one_{\{2\}}](1)=[\underlineT^{n}\mathbf{0}](1)=0,
    \]
     for all $n\in \N$.
\end{ex}
There are particular cases in which definitions (LR0), (LR1), (LR2) and (LR3) are all equivalent. 
We say that a set $\D$ is \textit{closed} when for all $x\in \D, y\notin \D$ and for all $T\in \T$, we have $T(x,y)=0$. 
\begin{prop}\label{prop:allequiv}
    Under the assumption that $\D$ is closed, definitions $\operatorname{(LR0)}$, $\operatorname{(LR1)}$, $\operatorname{(LR2)}$ and $\operatorname{(LR3)}$ are equivalent.
\end{prop}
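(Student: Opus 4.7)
The plan is to prove the only remaining implication, namely $\operatorname{(LR0)} \Rightarrow \operatorname{(LR3)}$, since the chain $\operatorname{(LR3)} \Rightarrow \operatorname{(LR2)} \Rightarrow \operatorname{(LR1)} \Leftrightarrow \operatorname{(LR0)}$ has already been established. Using the equivalence $\operatorname{(LR0)} \Leftrightarrow \operatorname{(LR1)}$, my task reduces to showing that $x \in \D_{k^*}$ implies $[\underlineT^n \one_\D](x) > 0$ for some $n \in \N_0$ whenever $\D$ is closed.

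The first step exploits closedness to establish $\one_\D \le \underlineT \one_\D$: for $x \in \D$, closedness forces $\sum_{y \in \D} T(x,y) = 1$ for every $T \in \T$, hence $[\underlineT \one_\D](x) = 1$, while for $x \notin \D$ the bound is trivial. Combining this with the monotonicity of $\underlineT$ (Lemma \ref{lemma:proptransop}), the sequence $(\underlineT^n \one_\D)_{n \in \N_0}$ is pointwise nondecreasing, so a coordinate that becomes positive stays positive for all larger exponents.

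With this in hand, I would induct on $n$ to prove the set equality
\[
\E_n \coloneqq \{x \in \X : [\underlineT^n \one_\D](x) > 0\} = \D_n.
\]
The base case $\E_0 = \D_0$ is immediate. In the inductive step, $\D_{n-1} \subseteq \E_n$ follows from the inductive hypothesis $\E_{n-1} = \D_{n-1}$ together with the nondecreasing property above. For $x \in \D_n \setminus \D_{n-1}$, finiteness of $\X$ supplies a constant $c > 0$ with $\underlineT^{n-1}\one_\D \ge c\,\one_{\D_{n-1}}$, so monotonicity and positive homogeneity of $\underlineT$ (Definition \ref{def:lowup_operator}) yield $[\underlineT^n \one_\D](x) \ge c\,[\underlineT \one_{\D_{n-1}}](x) > 0$, the last inequality holding because $x \in \D_n \setminus \D_{n-1}$. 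Conversely, the bound $\underlineT^{n-1}\one_\D \le \one_{\D_{n-1}}$ -- which holds because the left-hand side vanishes outside $\D_{n-1}$ by induction and is bounded by $1$ by Definition \ref{def:lowup_operator} -- gives $[\underlineT^n \one_\D](x) \le [\underlineT \one_{\D_{n-1}}](x)$ by monotonicity, hence $\E_n \subseteq \D_n$.

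Taking $n = k^*$, the hypothesis $x \in \D_{k^*}$ from $\operatorname{(LR1)}$ yields $x \in \E_{k^*}$, i.e., $[\underlineT^{k^*}\one_\D](x) > 0$, which is exactly $\operatorname{(LR3)}$ and closes the argument. I expect the main subtlety to be the $\D_n \subseteq \E_n$ direction of the inductive step: one must upgrade the qualitative observation that $\underlineT^{n-1}\one_\D$ is positive precisely on $\D_{n-1}$ to a uniform quantitative lower bound that can be passed through $\underlineT$ via positive homogeneity, a manoeuvre legitimate only because $\X$ is finite.
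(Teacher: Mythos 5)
Your argument is correct and closes the loop with a single implication, $\operatorname{(LR1)}\Rightarrow\operatorname{(LR3)}$, whereas the paper separately argues $\operatorname{(LR0)}\Rightarrow\operatorname{(LR2)}$ and $\operatorname{(LR1)}\Rightarrow\operatorname{(LR3)}$. More importantly, your proof of the key implication is genuinely different. The paper proceeds combinatorially: fixing $x\in\D_{k^*}$ and an arbitrary sequence $T_1,\dots,T_{k^*}\in\T$, it descends step by step through $\D_{k^*},\D_{k^*-1},\dots$ along positive-probability transitions, and then invokes closedness to pad out any path that enters $\D$ before time $k^*$ so that $[\underlineT^{k^*}\one_\D](x)>0$. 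You instead prove the stronger structural identity $\E_n\coloneqq\{x:[\underlineT^n\one_\D](x)>0\}=\D_n$ by simultaneous induction, using closedness exactly once -- to obtain $\one_\D\le\underlineT\one_\D$ and hence the pointwise monotonicity of $(\underlineT^n\one_\D)_n$ -- and then pushing a uniform lower bound $c\,\one_{\D_{n-1}}$ (available by finiteness of $\X$) through $\underlineT$ via monotonicity and positive homogeneity. Your route is more algebraic, avoids reasoning about sequences of matrices altogether, and produces the clean intermediate fact that the support of $\underlineT^n\one_\D$ coincides with $\D_n$ for every $n$; the paper's route stays closer to the path intuition behind reachability. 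Both correctly pinpoint closedness as what makes an early hit of $\D$ persist under further applications of $\underlineT$.
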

\begin{proof}
We show that $\text{(LR1)} \Rightarrow \text{(LR3)}$. The thesis will follow from this and Proposition \ref{prop:chainofequiv}.
We need to prove that, if $x\lowereach y$ under (LR1), then there exists $n\in \N$ such that 
\begin{equation*}
    \sum_{x_1,\dots, x_{n-1}\in \X} T_1(x,x_1)\cdots T_n(x_{n-1}, y)>0,
\end{equation*}
for all $T_1,\dots, T_n \in \T$. Let us pick $n=k^*$ recalling that, by hypothesis, we have $x\in \D_{k^*}$. For all $T_1\in \T$, there exists $x_1\in \D_{k^*-1}$ such that $T_1(x,x_1)>0$. Iterating this argument, we get to $\D$ in at most $k^*$ steps. If, for some sequence of matrices, there is a positive probability of hitting $\D$ at time $k'<k^*$, then, due to the closedness of $\D$, the same would hold at time $k^*$. 
\end{proof}
An alternative proof of the equivalence between (LR1) and (LR3) under the closedness hypothesis can also be found in \cite{debock2025convergent}.

\subsection{Lower Hitting Probabilities}
Having discussed the definition of reachability extensively, we can now use this definition to characterise lower hitting probabilities, i.e. $\ulp^\T=\inf_{T\in \T}p^T$.
\begin{defn}
We define the set of states $\A_\T\subset \X$ that cannot lower reach the target as
\begin{equation}
    \A_\T  \coloneqq \{ x \in A^c \mid x \notlowereach A\}.
\end{equation}
\end{defn}
Using the definition of lower reachability, we deduce a straightforward relationship between the sets $\A_\T$ and $\C_T$.
\begin{lemma}\label{lemma:422}
It holds that
\[
\A_\T = \bigcup_{T\in \T} \C_T.
\]
\end{lemma}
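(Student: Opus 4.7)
The plan is to show both inclusions via a straightforward unwinding of the definitions, using Proposition \ref{prop:reachcond} to bridge the two characterisations of $\C_T$ (one in terms of the reachability relation $\to^T$, the other in terms of the entries $T^n(x,y)$).

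First I would handle the inclusion $\A_\T \subseteq \bigcup_{T\in\T} \C_T$. Take $x \in \A_\T$. By definition, $x \in A^c$ and $x \notlowereach A$, i.e. the negation of (LR0) holds: there exists $T \in \T$ such that for every $n \in \N_0$ one has $[T^n \one_A](x) = 0$. Since $[T^n \one_A](x) = \sum_{y \in A} T^n(x,y)$ is a sum of nonnegative terms, this forces $T^n(x,y) = 0$ for every $y \in A$ and every $n \in \N_0$. By Proposition \ref{prop:reachcond}, this is equivalent to $x \not\to^T y$ for all $y \in A$, which together with $x \in A^c$ says exactly $x \in \C_T$.

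For the reverse inclusion $\bigcup_{T\in\T} \C_T \subseteq \A_\T$, fix $x \in \C_T$ for some $T \in \T$. By definition of $\C_T$, we have $x \in A^c$ and $x \not\to^T y$ for every $y \in A$. Applying Proposition \ref{prop:reachcond} in the other direction yields $T^n(x,y) = 0$ for all $y \in A$ and all $n \in \N_0$, hence $[T^n \one_A](x) = 0$ for all $n \in \N_0$. This is precisely a witness for the negation of (LR0), so $x \notlowereach A$, and combined with $x \in A^c$ we obtain $x \in \A_\T$.

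There is no real obstacle here; the only care needed is to be explicit about using Proposition \ref{prop:reachcond} to pass between ``$[T^n \one_A](x) = 0$ for all $n$'' and ``$x \not\to^T y$ for all $y \in A$'', and to remember that the condition $x \in A^c$ is already built into the definition of $\C_T$, so the union $\bigcup_{T\in\T} \C_T$ is automatically contained in $A^c$ and no extra intersection with $A^c$ is required on the right-hand side.
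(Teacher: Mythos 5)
Your proof is correct and is essentially the same argument the paper has in mind: the paper's proof consists of the single sentence ``The proof follows immediately from the definition of $\A_\T$ and lower reachability,'' and the rewriting of $\A_\T$ displayed just before the lemma already identifies it with $\{x \in A^c \mid \exists T \in \T : x \in \C_T\}$. You are simply spelling out that unwinding carefully, including the appeal to Proposition \ref{prop:reachcond} to pass between the two characterisations of $\C_T$, which the paper folds into the definition of $\C_T$ itself.
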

\begin{proof}
    The set $\A_\T$ can be rewritten as
\begin{align*}
    \A_\T &= \{ x \in A^c \mid \exists T\in \T : \forall n\in \N, \ [T^n \one_A](x)=0 \}\\
    &=\{x\in A^c\mid \exists T\in \T: x\in \C_T\},
\end{align*}
and the claim follows.
\end{proof}
We may interpret \(\A_\T \) as the set of states whose lower probability of reaching \(A\) is zero, since there exists a transition matrix that disconnects $\A_\T $ from $A$. 
We are finally able to characterise lower hitting probabilities as the unique solution of a system of equations.
\begin{prop}\label{prop:423}
The vector of lower hitting probabilities \(\ulp^\T\) satisfies \(\ulp^\T(x)=1\) for all \(x\in A\) and  \(\ulp^\T(x)=0\) if and only if \(x\in \A_\T \). Moreover, $\ulp^\T$ is the unique solution of
\begin{equation} \label{eq:lowhitprob}
    \ulp^\T = \one_A + \one_{A^c\setminus \A_\T}\cdot \underlineT \ulp^\T.
\end{equation}
\end{prop}

\begin{proof}
Let us start with the claims about the structure of \(\ulp^\T\). We know from Krak et al.~\cite[Corollary 19]{krak2019hitting} that
\[
\ulp^\T = \one_A + \one_{A^c}\cdot \underlineT \ulp^\T
\]
has a minimal nonnegative solution \(\ulp^\T\in [0,1]^{\X}\). By the definition of the system, this solution satisfies \(\ulp^\T(x)=1\) for all \(x\in A\). The fact that \(\ulp^\T(x)=0\) if and only if \(x\in \A_\T \) follows from the definition of lower hitting probability, Lemma \ref{lemma:422}, and Proposition \ref{prop:hitprob}. Therefore, $\ulp^\T$ is the minimal nonnegative solution of \eqref{eq:lowhitprob}.

Now we prove the uniqueness of $\ulp^\T$. 
Let $q\in [0,1]^\X$ be a solution of \eqref{eq:lowhitprob}. By minimality, we know that $q\ge \ulp^\T$, therefore we need to show that $q\le \ulp^\T$. 
Since $q$ and $\ulp^\T$ already agree on $A$ and $\A_\T$, it suffices to show that \(q|_{A^c\setminus \A_\T }\le\ulp^\T|_{A^c\setminus \A_\T }\).

By Krak et al.~\cite[Theorem 18]{krak2019hitting}, there exists $T^*\in \T$ such that $p^{T^*}=\ulp^\T$. Since $p^{T^*}(x)>0$ if and only if $x\in \A_\T$, by Proposition \ref{prop:hitprob}, we have $\C_{T^*}=\A_\T$ and
\begin{equation*}
   \ulp^\T= p^{T^*}=\one_A+\one_{A^c\setminus \A_\T}\cdot T^*p^{T^*}.
\end{equation*}
From \eqref{eq:lowhitprob} applied to $q$, we obtain
\[
q|_{A^c\setminus \A_\T}=(\underlineT q)|_{A^c\setminus \A_\T}\le (T^*q)|_{A^c\setminus \A_\T}.
\]
Now we apply Lemma \ref{lemma:rewritehitprob} and we get
\[
q|_{A^c\setminus \A_\T}\le (T^*q)|_{A^c\setminus \A_\T}=T^*|_{A^c\setminus \A_\T}q|_{A^c\setminus \A_\T}+(T^*\one_A)|_{A^c\setminus \A_\T}.
\]
Recursively expanding this expression for \(q|_{A^c\setminus \A_\T }\) yields, after \(m\in \N\) expansions
 \begin{equation*}
        q|_{A^c\setminus {\A_{\T}}}\le(T^*|_{A^c\setminus {\A_{\T}}})^m q|_{A^c\setminus {\A_{\T}}}+\left[\sum_{k=0}^{m-1}(T^*|_{A^c\setminus {\A_{\T}}})^k\right](T^*\one_A)|_{A^c\setminus {\A_{\T}}}.
\end{equation*}
Since $\C_{T^*}={\A_{\T}}$, we let $m\to +\infty$ and we apply Lemma \ref{lemma:417}:
\begin{equation*}
     q|_{A^c\setminus {\A_{\T}}}\le(I-T^*|_{A^c\setminus {\A_{\T}}})^{-1}(T^*\one_A)|_{A^c\setminus {\A_{\T}}}=p^{T^*}|_{A^c\setminus {\A_{\T}}}=\ulp^\T|_{A^c\setminus {\A_{\T}}},
\end{equation*}
where the penultimate equality follows from Proposition \ref{prop:418}. Thus $q=\ulp^\T$, which is what we wanted to prove.
\end{proof}

\subsection{Upper Hitting Probabilities}
We now turn our attention to characterising upper hitting probabilities, i.e. $\olp^\T=\sup_{T\in \T}p^T$. 

\begin{defn}\label{def:defW}
We define the set of states \({\W_{\T}} \) that cannot upper reach the target  as
\begin{equation}
{\W_{\T}} \coloneqq \{ x\in A^c \mid x\notuppereach A\}.
\end{equation}
\end{defn}
Similarly to lower hitting probabilities, there is a straightforward connection between sets $\W_\T$ and $\C_T$.
\begin{lemma}\label{lemma:426}
It holds that
\[
{\W_{\T}} = \bigcap_{T\in \T} \C_T.
\]
\end{lemma}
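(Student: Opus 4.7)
The plan is to argue this directly from the two equivalent forms of $\W_\T$ given in the excerpt and the definition of $\C_T$ from the preliminaries, with essentially no machinery beyond swapping the order of quantifiers and rewriting an indicator-sum.

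First, I would recall that $\C_T = \{x \in A^c \mid \forall y \in A,\ \forall n \in \N_0 : T^n(x,y) = 0\}$, and note the basic observation that, since every entry $T^n(x,y)$ is nonnegative, the condition $[T^n\one_A](x) = \sum_{y \in A} T^n(x,y) = 0$ is equivalent to $T^n(x,y) = 0$ for every $y \in A$. Hence $x \in \C_T$ iff $x \in A^c$ and $[T^n\one_A](x) = 0$ for all $n \in \N_0$.

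Next I would use the second rewriting of $\W_\T$ given immediately after Definition~\ref{def:defW}, namely $\W_\T = \{x \in A^c \mid \forall T \in \T : x \in \C_T\}$. This says exactly that $\W_\T = A^c \cap \bigcap_{T \in \T} \C_T$. Since $\C_T \subseteq A^c$ for every $T \in \T$, the intersection $\bigcap_{T \in \T} \C_T$ is already contained in $A^c$, so intersecting with $A^c$ is redundant and we obtain $\W_\T = \bigcap_{T \in \T} \C_T$.

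There is no real obstacle here; the only thing to be careful about is the permutation of the universal quantifiers ``$\forall T \in \T$'' and ``$\forall n \in \N$, $\forall y \in A$,'' which is harmless because all three are universal. Unlike in the lower-reachability analysis of Section~\ref{subsec:reachability}, no existential quantifier is involved, so no non-commutativity issues arise and the proof is a direct unfolding of definitions.
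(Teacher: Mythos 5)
Your proof is correct and takes the same route as the paper, which simply notes that the result follows immediately from the definitions of $\W_\T$ and upper reachability; you merely unfold those definitions explicitly, including the (correct) observations that $\{x\in A^c\mid \forall T\in\T: x\in\C_T\}=A^c\cap\bigcap_{T\in\T}\C_T$ and that the intersection with $A^c$ is redundant since $\C_T\subseteq A^c$ and $\T$ is nonempty.
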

\begin{proof}
The set $\W_\T$ can be rewritten as
\begin{align*}
    {\W_{\T}} &= \{ x\in A^c \mid \forall T\in\T, \  \forall n\in \N: [T^n\one_A](x)=0\}\\
    &=\{x\in A^c \mid \forall T\in \T: x\in \C_T\},
\end{align*}
and the claim follows.
\end{proof}
We may interpret $\W_\T$ as the set of states whose upper probability of reaching $A$ is zero, since every transition matrix disconnects $\W_\T$ to $A$. 

The analogue of Proposition \ref{prop:423} for upper hitting probabilities fails in general. Nevertheless, we find that $\olp^\T$ satisfies \(\olp^\T(x)=1\) for all \(x\in A\) and \(\olp^\T(x)=0\) if and only if \(x\in \W_\T \).
These properties follow immediately from Krak et al.~\cite[Corollary 19]{krak2019hitting}, the definition of upper hitting probability, and Lemma \ref{lemma:426}. 
We know from Krak et al.~\cite[Corollary 19]{krak2019hitting} that $\olp^\T$ is the minimal nonnegative solution of $\olp^\T = \one_A + \one_{A^c}\cdot \overlineT \olp^\T$, therefore it is also the minimal nonnegative solution of
\begin{equation} \label{eq:system_uprob}
    \olp^\T = \one_A + \one_{A^c\setminus \W_\T}\cdot \overlineT \olp^\T.
\end{equation}
Unlike the lower hitting probability, the vector $\olp^\T$ need not be the unique solution of \eqref{eq:system_uprob}. Below we present an example in which another vector $q>\overline{p}^\T$ also satisfies \eqref{eq:system_uprob}.
\begin{ex}
    Let $\X=\{1,2,3\}$ be the state space, and let $\{2\}$ be the target set. Let the set of transition matrices on $\X$ be
    \begin{equation*}
        \T=\ch\left\{I, T_n=
        \begin{bmatrix}
            1-\frac{2}{n}& \frac{1}{n}-\frac{1}{n^2} & \frac{1}{n}+\frac{1}{n^2}\\
            0 & 1 & 0\\
            0 & 0 & 1\\
        \end{bmatrix}
        : n\ge 2
        \right\},
    \end{equation*}
    where by ``co'' we denote the convex hull and by ``$I$'' the identity matrix. 
    The situation is represented by the following graph:
    \begin{center}
    \begin{tikzpicture}[
        xscale=1.75,yscale=1.75,
        every node/.style={draw=black,circle,fill=black!70,inner sep=2pt},
        redNode/.style={draw=red, fill=red!50},
        blueNode/.style={draw=blue, fill=blue!55},
        blackNode/.style={draw=black, fill=black},
        every label/.style={rectangle,fill=none,draw=none},
        every edge/.style={draw,bend right,looseness=0.3,
            postaction={
                decorate,
                decoration={
                    markings,
                    mark=at position 0.8 with {\arrow[#1]{Stealth}}
                }
            }
        }
    ]
        \node[blueNode,label=180:\textbf{1}] (1) at (2,54.4) {};
        \node[blueNode,label=180:\textbf{2}] (2) at (1.15,55.6) {};
       \node[blueNode,label=180:\textbf{3}] (3) at (2.85,55.6) {};
        \path (1) edge[bend left=0] node[left, draw=none, fill=none, yshift=-1pt] {$\nicefrac{1}{n}-\nicefrac{1}{n^2}$} (2);
        \path (1) edge[bend right=0] node[right,draw=none, fill=none, yshift=-1pt] {$\nicefrac{1}{n}+\nicefrac{1}{n^2}$} (3);
        \path (1) edge[relative=false,out=270,in=345,min distance=3.5ex] node[right,draw=none, fill=none, yshift=0.1pt] {$1-\nicefrac{2}{n}$} (1);
        \path (2) edge[relative=false,out=15,in=90,min distance=3.5ex] node[right,draw=none, fill=none, yshift=-0.1pt] {$1$} (2);
        \path (3) edge[relative=false,out=15,in=90,min distance=3.5ex] node[right,draw=none, fill=none, yshift=-0.1pt] {$1$} (3);
    \end{tikzpicture}
    \end{center}
    Since $A=\{2\}$ and $\W_\T=\{3\}$, we have that, for every $T\in \T$, $p^T(2)=1$ and $p^T(3)=0$.
    When starting from state ${1}$ and evolving under the transition matrix $T_n$, the hitting probability $p^{T_n}$ satisfies
    \begin{align*}
        p^{T_n}(1)=\left(1-\dfrac{2}{n}\right)p^{T_n}(1) +\left(\dfrac{1}{n}-\dfrac{1}{n^2}\right).
    \end{align*}
    Therefore
    \begin{equation*}
        p^{T_n}(1)=\dfrac{n}{2} \ \dfrac{n-1}{n^2}=\dfrac{n-1}{2n}\xrightarrow{n \to \infty} \dfrac{1}{2},
    \end{equation*}
    and $p^{T_n}(1)<1/2$ for all $n\ge 2$. Since no transition matrix in $\T$ yields a hitting probability greater or equal than $1/2$, we have that $\olp^\T(1)=1/2$. The vector $\olp^\T=(1/2,1,0)^\top$ satisfies \eqref{eq:system_uprob}:
    \begin{equation*}
        1/2=\olp^\T(1)=\sup_{T\in \T} \left[T\cdot
        \begin{pmatrix}
            1/2\\1\\0
        \end{pmatrix}
        \right](1)=\left[ I\cdot 
        \begin{pmatrix}
            1/2\\1\\0
        \end{pmatrix}\right](1)=1/2,
    \end{equation*}
    where the third equality follows from the identity matrix $I\in \T$ attaining the supremum: $[\overlineT  \olp^\T](1)=[I  \olp^\T](1)$.

    The vector $q=(1,1,0)^\top$ satisfies \eqref{eq:system_uprob} too. This follows from the fact that
    \begin{equation*}
        T_nq=
        \begin{bmatrix}
            1-\frac{2}{n}& \frac{1}{n}-\frac{1}{n^2} & \frac{1}{n}+\frac{1}{n^2}\\
            0 & 1 & 0\\
            0 & 0 & 1\\
        \end{bmatrix}
        \begin{pmatrix}
            1\\1\\0
        \end{pmatrix}
        =\begin{pmatrix}
            1-\frac{1}{n}-\frac{1}{n^2}\\1\\0
        \end{pmatrix}< \begin{pmatrix}
            1\\1\\0
        \end{pmatrix}=q,
    \end{equation*}
    and that $\overlineT q=Iq=q$. More generally, every vector $q_\alpha \coloneqq (\alpha,1,0)^\top$ with $1/2\le \alpha \le 1$ is a solution of \eqref{eq:system_uprob}.
\end{ex}
To restore the uniqueness of $\olp^\T$ we may impose additional hypotheses; for instance, it suffices that every transition matrix $T\in\T$ has the same set $\C_T$, or equivalently $\W_\T=\A_\T$.

\begin{prop}\label{prop:427}
If the set $\T$ is such that $\W_\T=\A_\T$, then the vector of upper hitting probabilities $\olp^\T$ is the unique solution of 
\begin{equation} \label{eq:system_uprob1}
    \olp^\T = \one_A + \one_{A^c\setminus \W_\T}\cdot \overlineT \olp^\T.
\end{equation}
\end{prop}
\begin{proof}
Let $q\in [0,1]^\X$ be a solution of \eqref{eq:system_uprob1}. 
By minimality, we know that $q\ge \olp^\T$, therefore we need to show $q\le \olp^\T$. 
Since $q$ and $\olp^\T$ already agree on $A$ and $\W_\T$ it suffices to show that $q|_{A^c\setminus\W_\T}\le \olp^\T|_{A^c\setminus\W_\T}$. 
    By Proposition \ref{prop:225}, there exists $S\in \T$ such that $\overlineT q = Sq$. It follows that
\(
    q = \one_A + \one_{A^c\setminus \W_\T}\cdot S q.
\)
From \eqref{eq:system_uprob1}, we obtain
\begin{equation*}
    \olp^\T|_{A^c\setminus\W_\T}=(\overlineT \olp^\T)|_{A^c\setminus\W_\T}\ge (S\olp^\T)|_{A^c\setminus\W_\T}.
\end{equation*}
Now we apply Lemma \ref{lemma:rewritehitprob} and we get
\begin{equation*}
    \olp^\T|_{A^c\setminus\W_\T}\ge (S\olp^\T)|_{A^c\setminus\W_\T} = S|_{A^c\setminus\W_\T} \olp^\T |_{A^c\setminus\W_\T} + (S\one_A)|_{A^c\setminus\W_\T}.
\end{equation*}
Recursively expanding this expression for \(\olp^\T|_{A^c\setminus \W_\T }\) yields, after \(m\in \N\) expansions
 \begin{equation*}
        \olp^\T|_{A^c\setminus {\W_{\T}}}\ge(S|_{A^c\setminus {\W_{\T}}})^m \  \olp^\T|_{A^c\setminus {\W_{\T}}}+\left[\sum_{k=0}^{m-1}(S|_{A^c\setminus {\W_{\T}}})^k\right](S\one_A)|_{A^c\setminus {\W_{\T}}}.
\end{equation*}
Since $\C_{S}={\W_{\T}}$, we let $m\to +\infty$ and we apply Lemma \ref{lemma:417}:
\begin{equation*}
     \olp^\T|_{A^c\setminus {\W_{\T}}}\ge(I-S|_{A^c\setminus {\W_{\T}}})^{-1}(S\one_A)|_{A^c\setminus {\W_{\T}}}=q|_{A^c\setminus {\W_{\T}}},
\end{equation*}
where the last equality follows from Proposition \ref{prop:418}. Thus $q=\olp^\T$, which is what we wanted to prove.
\end{proof}

\subsection{Further Discussions on the Definition of Reachability}
An alternative and very natural definition of lower reachability can be formulated in terms of hitting probabilities: 
\begin{description}
     \item[(LRHP):] $x$ lower reaches $A$ if the lower probability of hitting $A$ is positive, i.e. $\ulp^\T(x)>0$.
\end{description}
 Proposition~\ref{prop:423} shows that this definition is equivalent to $\operatorname{(LR0)}$: indeed, $x \in \A_\T$, i.e. $x \notlowereach A$, if and only if $\ulp^\T(x) = 0$. Taking the negation, we obtain that $x$ lower reaches $A$ under (LR0) if and only if $\ulp^\T(x) > 0$, thus definitions $\operatorname{(LR0)}$ and $\operatorname{(LRHP)}$ are equivalent.
 
In the same way, the definition of upper reachability can be rewritten in terms of positivity of the upper hitting probability. 
These definitions of reachability for IMCs mirror the behaviour in the precise setting. In fact, also for a (precise) homogeneous Markov chain we have that $x$ reaches $A$ if and only if its hitting probability is positive, i.e. $p^T(x)>0$. 

\section{Computing Hitting Probabilities for IMCs}\label{sec:computing}
Having established a characterisation of lower and upper hitting probabilities, this section presents practical algorithms for computing these quantities.
\subsection{Computing Lower Hitting Probabilities}\label{subsec:computing_lower}
We show an algorithmic result for computing \(\ulp^\T\). This result, which emulates Krak~\cite[Proposition 7]{krak2021comphit}, provides a practical algorithm for calculating \(\ulp^\T\) by iterating over extreme points of \(\T\). The only condition is to start with \(T_1\in \T\) such that the trivial hitting probabilities are already correct (i.e. \(p_1(x)=0\) for \(x\in \A_\T \) and \(p_1(x)=1\) for \(x\in A\)). Definition (LR1) provides a practical algorithm to identify the set $\A_\T$ and the transition matrix $T_1\in \T$. In particular, if we set $\D=A$ then $\D_{k^*}^c=\A_\T$. For all $z\in \A_\T=\D_{k^*}^c$ we have $[\underlineT \one_{\D_{k^*}}](z)=0$, therefore there exists a matrix $S_z\in \T$ such that $[S_z\one_{\D_{k^*}}](z)=0$. We construct the initial matrix $T_1\in \T$ by setting $T_1(z,\cdot)=S_z(z,\cdot)$, for all $z\in \A_\T$, and an arbitrary admissible row for any other state. It follows that \(\C_{T_1} = \A_\T \).

\begin{prop}\label{prop:424algolow}
Let \(T_1\in \T\) be any transition matrix such that \(\C_{T_1} = \A_\T \). For all \(n\in \N\), let \(p_n\) be the vector of hitting probabilities for $T_n$, i.e. the unique solution of the linear system
\[
p_n = \one_A + \one_{A^c\setminus \C_{T_n}}\cdot T_n p_n,
\]
and let \(T_{n+1}\) be an extreme point of \(\T\) such that
\[
T_{n+1} p_n =\underlineT p_n.
\]
Then, the sequence \((p_n)_{n\in \N}\) is nonincreasing and its limit \(p^*\coloneqq\lim_{n\to+\infty} p_n\) is the vector of lower hitting probabilities, i.e. the unique solution of the system 
\begin{equation}\label{eq:lowhitprob2}
    \ulp^\T=\one_A + \one_{A^c\setminus \A_\T}\cdot \underlineT  \ulp^\T.
\end{equation}
\end{prop}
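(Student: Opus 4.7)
The plan is to induct on $n$ to show that $\C_{T_n} = \A_\T$ throughout, which keeps the trivial boundary values of $p_n$ anchored; then to establish monotonicity $p_{n+1} \le p_n$ by using $p_n$ as a supersolution of the $T_{n+1}$-system; to pass to the limit using the Lipschitz continuity of $\underlineT$; and finally to identify the limit with $\ulp^\T$ via the uniqueness statement in Proposition \ref{prop:423}.

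For the induction, the base case $\C_{T_1} = \A_\T$ holds by hypothesis. Assuming $\C_{T_n} = \A_\T$, Proposition \ref{prop:418} gives $p_n(x)=1$ for $x \in A$ and $p_n(x)=0$ precisely for $x \in \A_\T$. For $x \in \A_\T = \C_{T_1}$, any $y$ with $T_1(x,y)>0$ must also lie in $\C_{T_1}$, since otherwise a positive-weight path $x \to^{T_1} y \to^{T_1} A$ would contradict $x \in \C_{T_1}$; hence $[T_1 p_n](x)=0$. As $0 \le \underlineT p_n \le T_1 p_n$, this forces $[T_{n+1} p_n](x) = [\underlineT p_n](x) = 0$, so every $y$ with $T_{n+1}(x,y) > 0$ satisfies $p_n(y) = 0$, i.e.\ $y \in \A_\T$. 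Iterating this along any $T_{n+1}$-path starting from $x$ shows that every reachable state lies in $\A_\T$, so $x \in \C_{T_{n+1}}$. Combined with $\C_{T_{n+1}} \subseteq \A_\T$ from Lemma \ref{lemma:422}, this yields $\C_{T_{n+1}} = \A_\T$ and closes the induction.

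For monotonicity, the choice of $T_{n+1}$ gives $T_{n+1} p_n = \underlineT p_n \le T_n p_n$, so
\[
\one_A + \one_{A^c} \cdot T_{n+1} p_n \;\le\; \one_A + \one_{A^c} \cdot T_n p_n \;=\; p_n,
\]
exhibiting $p_n$ as a nonnegative supersolution of the $T_{n+1}$-system. A standard monotone iteration argument---iterating $F(q) \coloneqq \one_A + \one_{A^c} \cdot T_{n+1} q$ starting from $0$ and noting that every iterate stays below $p_n$---then forces $p_{n+1} \le p_n$. Since each $p_n$ lies in $[0,1]^{\X}$, the sequence converges pointwise to some $p^* \in [0,1]^{\X}$ with $p^*(x) = 1$ on $A$ and $p^*(x) = 0$ on $\A_\T$.

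To identify $p^*$, combine the choice of $T_n$ with monotonicity of $T_n$ applied to $p_n \le p_{n-1}$ to obtain, for $n \ge 2$, the squeeze
\[
\one_A + \one_{A^c} \cdot \underlineT p_n \;\le\; p_n \;=\; \one_A + \one_{A^c} \cdot T_n p_n \;\le\; \one_A + \one_{A^c} \cdot T_n p_{n-1} \;=\; \one_A + \one_{A^c} \cdot \underlineT p_{n-1}.
\]
Since $\underlineT$ is $1$-Lipschitz by Lemma \ref{lemma:proptransop}(3), both outer terms converge to $\one_A + \one_{A^c} \cdot \underlineT p^*$, so $p^*$ solves the fixed-point equation \eqref{eq:lowhitprob2}. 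Because $p^*$ also satisfies the boundary conditions $p^*(x)=1$ on $A$ and $p^*(x)=0$ on $\A_\T$, the uniqueness part of Proposition \ref{prop:423} gives $p^* = \ulp^\T$, the desired minimal solution. The main obstacle is the inductive identity $\C_{T_{n+1}} = \A_\T$: it is the lever that preserves the correct zero-boundary at every step, without which both the supersolution comparison and the final squeeze would lose their force.
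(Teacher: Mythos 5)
Your proposal is correct and follows the same overall structure as the paper's: first an inductive lemma establishing $\C_{T_n} = \A_\T$ for all $n$ (the paper's Lemma~\ref{lemma:ctnat}), then monotonicity of $(p_n)$, convergence, verification that the limit solves the fixed-point equation, and finally identification via the uniqueness part of Proposition~\ref{prop:423}. Two of your arguments differ in execution from the paper's, and both are arguably cleaner. For monotonicity on $A^c\setminus\A_\T$ you use the supersolution observation $F(p_n)\le p_n$ together with the fact that the minimal nonnegative solution $p_{n+1}$ arises as the increasing limit of the iterates $F^k(0)$, all trapped below $p_n$; the paper instead restricts to $A^c\setminus\A_\T$, expands the inequality into a Neumann series, and invokes Lemma~\ref{lemma:417} and Proposition~\ref{prop:418} to identify the limit with $p_{n+1}$ explicitly. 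These are two routes to the same conclusion, though if written out in full you would want to justify the ``iterate from $0$'' fact about minimal nonnegative solutions (it is standard for hitting-probability systems but worth a citation). For identifying the limit, your two-sided squeeze $\one_A + \one_{A^c}\underlineT p_n \le p_n \le \one_A + \one_{A^c}\underlineT p_{n-1}$, combined with $1$-Lipschitz continuity of $\underlineT$, is more direct than the paper's three-term triangle-inequality bound on $\|p^* - (\one_A + \one_{A^c}\underlineT p^*)\|$. Your inductive argument for $\C_{T_{n+1}} = \A_\T$ also differs slightly: you bound $\underlineT p_n$ above by $T_1 p_n$ rather than $T_n p_n$, which avoids re-invoking the fixed-point equation for $p_n$ at that step. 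All three variations are sound.
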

We delay the proof of this proposition until after the following lemma.
\begin{lemma}\label{lemma:ctnat}
    For the sequence of transition matrices $(T_n)_{n\in \N}$ defined in Proposition \ref{prop:424algolow} it holds that
    \begin{equation}
        \C_{T_n}=\A_\T, \label{eq:skibidi}
    \end{equation}
    for all $n\in \N$.
\end{lemma}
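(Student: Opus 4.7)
The plan is to argue by induction on $n$. The base case $n=1$ holds by the choice of $T_1$ in the hypothesis of Proposition~\ref{prop:424algolow}. The inductive hypothesis is that $\C_{T_n}=\A_\T$, and the goal is to show $\C_{T_{n+1}}=\A_\T$.

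The inclusion $\C_{T_{n+1}}\subseteq\A_\T$ is immediate from Lemma~\ref{lemma:422}, since $\A_\T=\bigcup_{T\in\T}\C_T$ and $T_{n+1}\in\T$. The real content is the reverse inclusion $\A_\T\subseteq\C_{T_{n+1}}$. To establish it, I would first record two consequences of the inductive hypothesis combined with Proposition~\ref{prop:hitprob}: (i) $p_n(y)=0$ iff $y\in\A_\T$, and (ii) $p_n(y)=1$ for $y\in A$, so in particular $p_n(y)>0$ for every $y\in A^c\setminus\A_\T$.

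Now fix an arbitrary $x\in\A_\T$. By the definition of $\A_\T$, there exists $T'\in\T$ with $x\in\C_{T'}$. A short side observation, relying only on Definition~\ref{def:reach} and Proposition~\ref{prop:reachcond}, is that whenever $T'(x,y)>0$ one must have $y\in\C_{T'}$: otherwise $y\to^{T'}A$, hence $x\to^{T'}A$ through $y$, contradicting $x\in\C_{T'}$. Since $\C_{T'}\subseteq\A_\T$, combining this with (i) gives $p_n(y)=0$ for every $y$ with $T'(x,y)>0$, and therefore $[T' p_n](x)=0$. The defining property of $T_{n+1}$ then yields
\[
0\le[T_{n+1}p_n](x)=[\underlineT p_n](x)\le[T'p_n](x)=0,
\]
so $[T_{n+1}p_n](x)=0$. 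Because $p_n\ge 0$ and $T_{n+1}$ has nonnegative entries summing to one per row, this forces $T_{n+1}(x,y)>0\Rightarrow p_n(y)=0$, which, by (i) and (ii), means $y\in\A_\T$.

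This shows that $\A_\T$ is invariant under one-step transitions of $T_{n+1}$: starting in $\A_\T$ the process cannot exit, and in particular cannot reach $A$. Thus every $x\in\A_\T$ lies in $\C_{T_{n+1}}$, completing the inductive step. The only delicate point is the auxiliary claim that $T'(x,y)>0$ forces $y\in\C_{T'}$, but this is a standard reachability argument; once it is in hand, the rest is a direct chain of inequalities exploiting the nonnegativity of $p_n$ and the minimization property of $T_{n+1}$.
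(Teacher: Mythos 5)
Your proof is correct and follows essentially the same inductive strategy as the paper: establish $[T_{n+1}p_n](x)=0$ for $x\in\A_\T$ by sandwiching $\underlineT p_n$ between $0$ and some comparison matrix, then deduce that $\A_\T$ is invariant under $T_{n+1}$ so that $\A_\T\subseteq\C_{T_{n+1}}$, and close the loop with Lemma~\ref{lemma:422}. The one point where you diverge is the choice of comparison matrix: the paper uses $T_n$ itself, where the fixed-point relation $p_n=\one_A+\one_{A^c}\cdot T_np_n$ and the induction hypothesis $\C_{T_n}=\A_\T$ give $[T_np_n](x)=p_n(x)=0$ in one line; you instead pull an auxiliary $T'\in\T$ with $x\in\C_{T'}$ from the definition of $\A_\T$ and prove a small reachability side claim ($T'(x,y)>0\Rightarrow y\in\C_{T'}$) to conclude $[T'p_n](x)=0$. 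Both routes are valid; the paper's is marginally more economical because it reuses the already-available $T_n$ and its defining equation rather than introducing a new matrix and an extra lemma, but the gain is only a few lines.
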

\begin{proof}
The proof uses induction on \eqref{eq:skibidi}.
For the base case $n = 1$, we have $\C_{T_1} = \A_\T $ by the choice of $T_1 \in \T$. Assume now that $\C_{T_n} = \A_\T $ for some $n \in \N$. We show that also $\C_{T_{n+1}} = \A_\T $. 
First, we observe that $\C_{T_{n+1}}\subseteq \A_\T=\bigcup_{T\in \T}\C_T$, by Lemma \ref{lemma:422}. What remains to prove is $\A_\T\subseteq \C_{T_{n+1}}$.
Let $x \in \A_\T$. 
Since $p_n$ is the vector of hitting probabilities for $T_n$, by Proposition \ref{prop:hitprob} we have
\[
[T_n p_n](x) = p_n(x) = 0,
\]
where the last equality follows from the induction hypothesis $\C_{T_n}=\A_\T$.
By construction, we have 
\[
T_{n+1} p_n = \underlineT p_n \le T_n p_n.
\]
 Therefore,
\[
0 = [T_n p_n](x) \ge [T_{n+1} p_n](x) = \sum_{y\in\X} T_{n+1}(x,y) p_n(y)\ge 0.
\]
Since the right-hand side is nonnegative, we must have $T_{n+1}(x,y)=0$ for every $y\in\X$ with $p_n(y)>0$. By Proposition~\ref{prop:hitprob} this implies $T_{n+1}(x,y)=0$ for all $y\notin\C_{T_n}=\A_\T$.  
From the arbitrariness of $x\in \A_\T$ we have $T_{n+1}(x,y)=0$ for all $x\in \A_\T$ and $y\notin \A_\T$.  Since $A\subseteq\A_\T^c$, we have $x\not\to^{T_{n+1}}A$ for all $x\in \A_\T$. Thus $x\in\C_{T_{n+1}}$ and $\A_\T\subseteq\C_{T_{n+1}}$, concluding the proof.
\end{proof}
\begin{proof}[Proof of Proposition \ref{prop:424algolow}]
The sequence $(p_n)_{n\in\N}$ is well-defined since \[p_n = \one_A + \one_{A^c\setminus \C_{T_n}} \cdot T_n p_n\] has a unique solution (Proposition \ref{prop:418}) and the existence of the extreme point $T_{n+1}$ is guaranteed by Proposition \ref{prop:225}.

By Proposition \ref{prop:418}, we have that, for all $n \in \N$, $p_n(x) = 1$ for all $x \in A$ and $p_n(x) = 0$ for all $x \in \C_{T_n}=\A_\T$, therefore $(p_n)_{n\in\N}$ is trivially convergent on $ A\cup \A_\T $.

Next, we show that $(p_n)_{n\in\N}$ is nonincreasing and convergent on $A^c \setminus \A_\T $. 
Recall that 
\(
T_{n+1} p_n = \underlineT p_n \le T_n p_n
\)
for all $n\in \N$.
This yields
\begin{align*}
    p_n|_{A^c\setminus \A_\T } &= (T_n p_n)|_{A^c\setminus \A_\T }\\ 
    &\ge (T_{n+1} p_n)|_{A^c\setminus \A_\T } \\
    &= T_{n+1}|_{A^c\setminus \A_\T } p_n|_{A^c\setminus \A_\T } + (T_{n+1} \one_A)|_{A^c\setminus \A_\T },
\end{align*}
where we used Lemma \ref{lemma:rewritehitprob} and the fact that  $\A_\T = \C_{T_n}$ for all $n \in \N$. Repeatedly expanding this inequality $m \in \N$ times yields 
\[
p_n|_{A^c\setminus \A_\T } \ge (T_{n+1}|_{A^c\setminus \A_\T })^m p_n|_{A^c\setminus \A_\T } + \left[\sum_{k=0}^{m-1} (T_{n+1}|_{A^c\setminus \A_\T })^k \right](T_{n+1} \one_A)|_{A^c\setminus \A_\T }.
\]
Letting $m \to +\infty$ and applying Lemma \ref{lemma:417} yields 
\[
p_n|_{A^c\setminus \A_\T } \ge (I - T_{n+1}|_{A^c\setminus \A_\T })^{-1} (T_{n+1} \one_A)|_{A^c\setminus \A_\T } = p_{n+1}|_{A^c\setminus \A_\T },
\]
where the last step follows from Proposition \ref{prop:418}. Thus the sequence $(p_n)_{n\in\N}$ is nonincreasing.
Since $(p_n)_{n\in\N}$ is nonincreasing and $p_n \in [0,1]^{{\X}}$ for all $n \in \N$, it follows that the limit 
\(
p^* \coloneqq \lim_{n\to+\infty} p_n
\)
exists. 

We move on to showing that $p^*$ satisfies the system in Equation \eqref{eq:lowhitprob2}. Fix any $n \in \N$. Then
\begin{align}
    &||p^* - (\one_A + \one_{A^c\setminus \A_\T} \cdot \underlineT p^*)|| \nonumber \\
    & \qquad \le ||p^* - p_{n+1}|| + ||p_{n+1} - (\one_A + \one_{A^c\setminus \A_\T} \cdot T_{n+1} p_{n+1})|| \nonumber \\
    &\qquad\qquad\qquad+ ||\one_{A^c\setminus \A_\T} \cdot T_{n+1} p_{n+1} - \one_{A^c\setminus \A_\T} \cdot \underlineT p^*||\nonumber \\
    & \qquad  = ||p^* - p_{n+1}|| + ||\one_{A^c\setminus \A_\T} \cdot T_{n+1} p_{n+1} - \one_{A^c\setminus \A_\T} \cdot \underlineT p^*||\nonumber\\
    & \qquad  \le ||p^* - p_{n+1}|| + ||T_{n+1} p_{n+1} - \underlineT p^*||,\label{pennablu1}
\end{align}
for any vector norm $||\cdot||$.
Continuing \eqref{pennablu1} 
\begin{align}
    &||p^* - (\one_A + \one_{A^c\setminus \A_\T} \cdot \underlineT p^*)|| \nonumber \\
    &\qquad \le ||p^* - p_{n+1}|| + ||T_{n+1} p_{n+1} - \underlineT p_n|| + ||\underlineT p_n - \underlineT p^*||\nonumber\\
    & \qquad\le ||p^* - p_{n+1}|| + ||T_{n+1} p_{n+1} - T_{n+1} p_n|| + ||p_n - p^*||\nonumber\\
    & \qquad \le ||p^* - p_{n+1}|| + ||p_{n+1} - p_n|| + ||p_n - p^*||,\label{pennablu2}
\end{align}
where we used Lemma \ref{lemma:proptransop}(3) for the second and third inequality. Since all summands on the final right-hand side of \eqref{pennablu2} vanish as $n \to +\infty$, we find that $p^*$ is a solution of the system in Equation \eqref{eq:lowhitprob2}. Since Proposition \ref{prop:423} guarantees uniqueness of the solution, it follows that $p^*=\ulp^{\T}$.
\end{proof}

\subsection{Computing Upper Hitting Probabilities}
We present an algorithm to compute the upper hitting probability \(\olp^\T\), analogous to the one for $\ulp^\T$. As with lower hitting probabilities, the algorithm must be initialised with a matrix $T_1$ satisfying $\C_{T_1}=\W_\T$. Any interior point $T_1\in \T$ satisfies this condition because $T_1(x,y)>0$ if and only if $[\overlineT \one_{\{y\}}](x)>0$.
\begin{prop}
\label{mainupprop}
Let $T_1 \in \T$ be any transition matrix such that $\C_{T_1} = {\W_{\T}}$. For all $n \in \N$, let $p_n$ be the vector of hitting probabilities for $T_n$, i.e. the unique solution of the linear system \[p_n = \one_A + \one_{A^c\setminus \C_{T_n}} \cdot T_n p_n,\] and let $T_{n+1}$ be an extreme point of $\T$ such that \[T_{n+1} p_n=\overlineT p_n\] and such that $T_{n+1}(x,\cdot)=T_n(x,\cdot)$ if 
$[T_{n}p_n](x)=[\overlineT p_n](x)$.
Then, the sequence $(p_n)_{n \in \N}$ is nondecreasing, and its limit $p^* \coloneqq \lim_{n \to +\infty} p_n$ is the vector of upper hitting probabilities, i.e the minimal nonnegative solution of the system 
\begin{equation}\label{eqimpo}
    \olp^{\T}=\one_A+\one_{A^c\setminus \W_\T}\cdot \overlineT \olp^{\T}.
\end{equation}
\end{prop}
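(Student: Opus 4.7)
My plan is to mirror the structure of the proof of Proposition \ref{prop:424algolow} for the lower case, adapting each step to the upper-envelope setting. The skeleton is: (i) show the sequence $(p_n)_{n\in\N}$ and the matrices $(T_n)_{n\in\N}$ are well-defined; (ii) prove by induction that $\C_{T_n}=\W_\T$ for every $n$, the analog of Lemma \ref{lemma:ctnat}; (iii) derive monotonicity $p_n\le p_{n+1}$ from $T_{n+1}p_n\ge T_np_n$ by applying Lemma \ref{lemma:rewritehitprob} with $\SSS=\W_\T$ and iterating with the help of Lemma \ref{lemma:417}; (iv) pass to the limit $p^*=\lim_n p_n$ and show it is a fixed point of \eqref{eqimpo} using the Lipschitz property of $\overlineT$ from Lemma \ref{lemma:proptransop}(3); (v) identify $p^*$ with $\olp^\T$ by verifying the conditions of Proposition \ref{prop:427}, namely $p^*(x)=1$ on $A$ and $p^*(x)=0$ on $\W_\T$. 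The well-definedness in step (i) uses SSR: for each $x$ the row $T_{n+1}(x,\cdot)$ is chosen independently from the credal set $\T_x$, so I set $T_{n+1}(x,\cdot)=T_n(x,\cdot)$ whenever $[T_np_n](x)=[\overlineT p_n](x)$ and otherwise pick an extreme point of $\T_x$ maximizing $[Tp_n](x)$.

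The heart of the argument, and the place where the extra row-preservation constraint enters, is step (ii). The inclusion $\W_\T\subseteq\C_{T_n}$ is immediate and non-inductive: $p_n=p^{T_n}\le\olp^\T$ together with $\olp^\T|_{\W_\T}=0$ (Proposition \ref{prop:427}) and Proposition \ref{prop:hitprob} force $p_n(x)=0$ hence $x\in\C_{T_n}$ for every $x\in\W_\T$. For the reverse inclusion I argue by induction on $n$. Suppose for contradiction there exists $z^*\in\C_{T_{n+1}}\setminus\W_\T$ maximizing $p_n$ over that set, with value $c\coloneqq p_n(z^*)>0$. Successors of $z^*$ under $T_{n+1}$ remain in $\C_{T_{n+1}}$, and since $p_n$ vanishes on $\W_\T$, the weighted average satisfies $[T_{n+1}p_n](z^*)\le c$; combined with $[T_{n+1}p_n](z^*)=[\overlineT p_n](z^*)\ge[T_np_n](z^*)=c$ we get the chain of equalities $[T_np_n](z^*)=[T_{n+1}p_n](z^*)=[\overlineT p_n](z^*)$, triggering the row-preservation condition $T_{n+1}(z^*,\cdot)=T_n(z^*,\cdot)$. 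Since $z^*\notin\C_{T_n}$ by the induction hypothesis, Lemma \ref{lemma:419} yields a simple path $z^*=w_0,w_1,\dots,w_k\in A$ with $T_n(w_i,w_{i+1})>0$ and $p_n(w_i)\le p_n(w_{i+1})$; the intermediate $w_i$ lie in $A^c\setminus\C_{T_n}=A^c\setminus\W_\T$ and in $\C_{T_{n+1}}$, with $p_n(w_i)\ge c$, so by maximality $p_n(w_i)=c$. Iterating the row-preservation argument along the path gives $T_{n+1}(w_i,w_{i+1})=T_n(w_i,w_{i+1})>0$ for every $i$, producing a $T_{n+1}$-path from $z^*$ to $A$ and contradicting $z^*\in\C_{T_{n+1}}$.

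Once $\C_{T_n}=\W_\T$ is secured, steps (iii)--(v) go through by the lower-case template with inequalities reversed and $\overlineT$ in place of $\underlineT$: for $x\in A^c\setminus\W_\T$ I get $p_n(x)=[T_np_n](x)\le[T_{n+1}p_n](x)$, rewrite using Lemma \ref{lemma:rewritehitprob}, iterate, and send the number of iterations to infinity via Lemma \ref{lemma:417}, obtaining $p_n|_{A^c\setminus\W_\T}\le p_{n+1}|_{A^c\setminus\W_\T}=p^{T_{n+1}}|_{A^c\setminus\W_\T}$ by Proposition \ref{prop:418}; the estimate
\[
\|p^*-(\one_A+\one_{A^c}\cdot\overlineT p^*)\|\le\|p^*-p_{n+1}\|+\|p_{n+1}-p_n\|+\|p_n-p^*\|
\]
then yields the fixed-point identity for $p^*$, and Proposition \ref{prop:427} closes the argument. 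The main obstacle is the inductive step in (ii): the row-preservation clause in the statement is precisely what keeps the sequence $(\C_{T_n})$ from growing beyond $\W_\T$, which is in turn what allows Lemma \ref{lemma:417} to be applied with $\SSS=\W_\T$ in the monotonicity and fixed-point arguments.
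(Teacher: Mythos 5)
Your proof is correct and follows the paper's overall skeleton — well-definedness of $(p_n)$ and $(T_n)$, the identity $\C_{T_n}=\W_\T$, monotonicity via Lemma \ref{lemma:rewritehitprob} and Lemma \ref{lemma:417}, the Lipschitz-based passage to the limit, and the identification via Proposition \ref{prop:427}. The one place where you genuinely depart from the paper is the proof of the key lemma establishing $\C_{T_n}=\W_\T$ (Lemma \ref{inclu_lemma} in the paper). The paper argues forward: it takes any $x\notin\C_{T_n}$, runs the Lemma \ref{lemma:419} path, and whenever the tiebreak condition fails at the first state $x_{k^*}$, it explicitly exhibits a state $z$ with $T_{n+1}(x_{k^*},z)>0$ and $p_n(z)>p_n(x_{k^*})$, then iterates this construction, relying on finiteness of $\X$ and strict increase of $p_n$ along the iteration to terminate. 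You instead argue by contradiction on an extremal element: taking $z^*$ maximizing $p_n$ over $\C_{T_{n+1}}\setminus\W_\T$, the maximality squeezes $[T_{n+1}p_n](z^*)\le c \le [T_n p_n](z^*)$ (the upper bound because $T_{n+1}$-successors of $z^*$ stay in $\C_{T_{n+1}}$ where $p_n\le c$, the lower bound because $[T_{n+1}p_n](z^*)=[\overlineT p_n](z^*)\ge[T_np_n](z^*)=p_n(z^*)=c$), forcing equality and hence row preservation at $z^*$; maximality then pins $p_n(w_i)=c$ for every intermediate state $w_i$ on the Lemma \ref{lemma:419} path, so row preservation propagates along the entire path and it becomes a $T_{n+1}$-path into $A$, contradicting $z^*\in\C_{T_{n+1}}$. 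Both proofs use the same two ingredients — Lemma \ref{lemma:419} and the tiebreaking clause — but your extremal argument sidesteps the paper's iteration-and-termination bookkeeping: instead of chasing states with strictly higher $p_n$, maximality makes $p_n$ constant along the path, so the contradiction falls out in a single pass.
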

We delay the proof of this proposition until after the following lemma.
\begin{lemma}\label{inclu_lemma}
    For the sequence of transition matrices $(T_n)_{n\in \N}$ defined in Proposition \ref{mainupprop} it holds that
    \begin{equation*}
        \C_{T_n}=\W_\T
    \end{equation*}
    for all $n\in \N$. Equivalently, $\C_{T_n}\supseteq \C_{T_{n+1}}$.
\end{lemma}
\begin{proof}
    Let $x \notin  \C_{T_n}$. We show that $x \notin \C_{T_{n+1}}$. If $x \in A$, then trivially $x \notin \C_{T_{n+1}}$, so assume $x \in A^c$. Then, since $x \notin\C_{T_n}$, we have $x \to^{T_n} y$ for some $y \in A$. By Lemma \ref{lemma:419}, there exists $m\in \N$ and a sequence of states 
\[
x = x_0, \ldots, x_m = y
\]
such that $T_n(x_{k}, x_{k+1}) > 0$ and $p_n(x_{k}) \leq p_n(x_{k+1})$ for all $k \in \{0, \ldots, m-1\}$. 

If it happens that $[T_{n+1}p_n](x_k) = [T_n p_n](x_k)$ for all $k \in \{0, \ldots, m\}$, then it clearly also holds that $T_{n+1}(x_k, x_{k+1}) > 0$ for all $k \in \{0, \ldots, m-1\}$, and therefore $x \to^{T_{n+1}} y$ and $x \notin \C_{T_{n+1}}$.

If $[T_{n+1}p_n](x_k) > [T_n p_n](x_k)$ for some $k \in \{0, \ldots, m\}$, let $k^*$  be the index of the first such state in the sequence $x_0, \ldots, x_m$. If $x_{k^*} \in A$, then, similarly to the previous case, we have $x \to^{T_{n+1}} x_{k^*} \in A$ and we are done. Hence, we assume that $x_{k^*} \in A^c$. Define
\[
\SSS \coloneqq \{z \in \X : p_n(z) > p_n(x_{k^*})\}.
\]
Let us assume \emph{ex absurdo} that $\SSS = \emptyset$. Since $p_n(z) = 1 \geq p_n(x_{k^*})$ for $z \in A$, $\SSS = \emptyset$ occurs only if $p_n(x_{k^*}) = 1$. In this case, we find
\[
1 \geq [T_{n+1}p_n](x_{k^*}) > [T_n p_n](x_{k^*}) = p_n(x_{k^*}) = 1,
\]
which is a contradiction, so $\SSS \neq \emptyset$.

Next, assume \emph{ex absurdo} that for all $z \in \SSS$, it holds that \[T_{n+1}(x_{k^*}, z) = 0.\] Then
\begin{align*}
[T_{n+1}p_n](x_{k^*}) &=\sum_{z\in \X} T_{n+1}(x_{k^*}, z)p_n(z)\\
&\leq \sum_{z\notin \SSS} T_{n+1}(x_{k^*}, z)p_n(x_{k^*})\\
&= p_n(x_{k^*})\underset{=1}{\cancel{\sum_{z\notin \SSS} T_{n+1}(x_{k^*}, z)}},
\end{align*}
where we used $p_n(z)\le p_n(x_{k^*})$ for all $z\notin \SSS$.
Since $[T_{n}p_n](x_{k^*}) = p_n(x_{k^*})$, we find $[T_{n+1}p_n](x_{k^*}) \leq [T_{n}p_n](x_{k^*})$ which is a contradiction. Therefore, there exists at least one $z \in \SSS$ such that $T_{n+1}(x_{k^*}, z) > 0$. Moreover, since $x \to^{T_{n+1}} x_{k^*}$ and $T_{n+1}(x_{k^*}, z) > 0$, we have $x \to^{T_{n+1}} z$, for some $z\in \SSS$.

So far, we have established that $x \to^{T_{n+1}} z$, with $p_n(x) < p_n(z)$. If $z \in A$, then $x \notin \C_{T_{n+1}}$ and the proof is complete. Otherwise, we find ourselves again in the starting situation, as we want to show that $z \to^{T_{n+1}} A$. The difference is that we have a strictly higher hitting probability of the state we are considering. The state $z$ is again in one of the two cases we have already described: if it is in the first case, then $z \to^{T_{n+1}} y$ for some $y \in A$ and therefore also $x \to^{T_{n+1}} y$, which implies $x \notin \C_{T_{n+1}}$. Otherwise, we find some $w \in \X$ with $w\ne x$ such that $p_n(x) < p_n(z) < p_n(w)$ and $z \to^{T_{n+1}} w$. Since we either establish $x \notin \C_{T_{n+1}}$ or we strictly increase the hitting probability, using the finiteness of the state space $\X$, we are able to conclude that $x\to^{T_{n+1}} A$ and $x \notin \C_{T_{n+1}}$. 
\end{proof}
\begin{proof}[Proof of Proposition \ref{mainupprop}]
This proof follows the structure of the proof of Proposition \ref{prop:424algolow}.

The sequence $(p_n)_{n\in\N}$ is well-defined since $p_n = \one_A + \one_{A^c\setminus \C_{T_n}} \cdot T_n p_n$ has a unique solution (Proposition \ref{prop:418}) and the existence of the extreme point $T_{n+1}$ is guaranteed by Proposition \ref{prop:225}.

By Proposition \ref{prop:418}, we have that, for all $n \in \N$, $p_n(x) = 1$ for all $x \in A$ and $p_n(x) = 0$ for all $x \in \C_{T_n}=\W_\T$, therefore $(p_n)_{n\in\N}$ is trivially convergent on $ A\cup \W_\T $.

Next, we show that $(p_n)_{n \in \N}$ is nondecreasing and convergent on $A^c \setminus {\W_{\T}}$. 
Recall that $T_{n+1} p_n = \overlineT p_n \geq T_n p_n$ for all $n\in \N$.
This yields
\begin{align*}
p_n |_{A^c \setminus {\W_{\T}}} &= (T_n p_n) |_{A^c \setminus {\W_{\T}}} \\
&\leq (T_{n+1} p_n) |_{A^c \setminus {\W_{\T}}} \\
&= T_{n+1} |_{A^c \setminus {\W_{\T}}} p_n |_{A^c \setminus {\W_{\T}}} + (T_{n+1} \one_A) |_{A^c \setminus {\W_{\T}}},
\end{align*}
where we used Lemma \ref{lemma:rewritehitprob} and the fact that  $\W_\T = \C_{T_n}$ for all $n \in \N$. Repeatedly expanding this inequality $m \in \N$ times yield
\begin{align*}
p_n |_{A^c \setminus {\W_{\T}}} &\leq (T_{n+1} |_{A^c \setminus {\W_{\T}}})^m p_n |_{A^c \setminus {\W_{\T}}} 
+ \left[\sum_{k=0}^{m-1} (T_{n+1} |_{A^c \setminus {\W_{\T}}})^k\right] (T_{n+1} \one_A) |_{A^c \setminus {\W_{\T}}}.
\end{align*}
Letting $m \to +\infty$ and applying Lemma \ref{lemma:417} yields 
\begin{align*}
p_n |_{A^c \setminus {\W_{\T}}} &\leq (I - T_{n+1})^{-1} (T_{n+1} \one_A) |_{A^c \setminus {\W_{\T}}} \\
&= p_{n+1} |_{A^c \setminus {\W_{\T}}},
\end{align*}
where the last step follows from Proposition~\ref{prop:418}. Thus, the sequence $(p_n)_{n \in \N}$ is nondecreasing.

Since $(p_n)_{n\in\N}$ is nondecreasing and $p_n \in [0,1]^{{\X}}$ for all $n \in \N$, it follows that the limit 
\(
p^* \coloneqq \lim_{n\to+\infty} p_n
\)
exists.

We move on to showing that $p^*$ satisfies the system in Equation \eqref{eqimpo}. Fix any $n \in \N$. Then
\begin{align}
&||p^* - (\one_A + \one_{A^c\setminus\W_\T}\cdot \overlineT p^*)||\nonumber \\
& \qquad\le ||p^* - p_{n+1}||  + ||p_{n+1} - (\one_A + \one_{A^c\setminus\W_\T}\cdot T_{n+1} p_{n+1})|| \nonumber\\
&\qquad\qquad\qquad + ||\one_{A^c\setminus \W_\T}\cdot T_{n+1} p_{n+1} - \one_{A^c\setminus\W_\T}\cdot \overlineT p^*|| \nonumber\\
&\qquad= ||p^* - p_{n+1}|| + ||\one_{A^c\setminus\W_\T}\cdot T_{n+1} p_{n+1} - \one_{A^c\setminus\W_\T}\cdot \overlineT p^*|| \nonumber\\
&\qquad\le ||p^* - p_{n+1}|| + ||T_{n+1} p_{n+1} - \overlineT p^*||, \label{eq:finestra1}
\end{align}
for any vector norm $||\cdot||$.
Continuing \eqref{eq:finestra1}  
\begin{align}
&||p^* - (\one_A + \one_{A^c\setminus\W_\T}\cdot \overlineT p^*) \nonumber|| \\
&\qquad\le ||p^* - p_{n+1}|| + ||T_{n+1} p_{n+1} - \overlineT p_n||+ ||\overlineT p_n - \overlineT p^*|| \nonumber \\
&\qquad\le ||p^* - p_{n+1}|| + ||T_{n+1} p_{n+1} - T_{n+1} p_n|| + ||p_n - p^*|| \nonumber \\
&\qquad\le ||p^* - p_{n+1}|| + ||p_{n+1} - p_n|| + ||p_n - p^*||, \label{wok1}
\end{align}
where we used Lemma \ref{lemma:proptransop}(3) for the second and third inequalities.
Since all summands on the final right-hand side of \eqref{wok1} vanish as $n\to +\infty$, we find that $p^*$ is a solution of the system in Equation \eqref{eqimpo}.

It remains to establish that $p^* = \olp^{\T}$, i.e. that $p^*$ is the \textit{minimal} nonnegative solution of the system in Equation \eqref{eqimpo}.  We already know that they agree on $A\cup {\W_{\T}}$, so fix an arbitrary state $x\in A^c\setminus {\W_{\T}}$. 
We also know that $\olp^{\T}(x)=\sup_{T\in\T} p^T(x)$ thus $p_n(x)\le \olp^{\T}(x)$ as $p_n$ is the hitting probability associated with $T_n\in \T$. Since we have established that $p_n(x) \to p^*(x)$ we conclude that $p^*(x)\le \olp^{\T}(x)$, and, by minimality of $\olp^\T$, $p^*=\olp^\T$.

\end{proof}
\subsubsection{Remark - the Tiebreaking Condition}
    Proposition \ref{mainupprop} is similar to Proposition \ref{prop:424algolow}, but imposes an additional condition on the next transition matrix:
    \begin{equation}\label{eq:condupperprob}
         T_{n+1}(x,\cdot)=T_n(x,\cdot) \ \text{ if } \ [T_{n}p_n](x)=[\overlineT p_n](x).
    \end{equation}
    In other words, whenever the current row $T_n(x,\cdot)$ already attains the maximum $[T_{n} p_n](x)=\max_{T\in \T} [T p_n](x)$, the algorithm leaves that row unchanged. Equivalently, the condition acts as a tiebreaker: if several distributions achieve the maximum and $T_n(x,\cdot)$ is one of them, we keep $T_n(x,\cdot)$.
    This prevents pathological behaviours as illustrated in the following example. 
\begin{ex}
    Let $\X=\{1,2,3\}$ be the state space, and let $\{3\}$ be the target set. Let the set of transition matrices on $\X$ be
    \begin{equation*}
        \T=\ch\left\{T=
        \begin{bmatrix}
            1 & 0 & 0\\
            q_1 & q_2 & q_3\\
            0 & 0 & 1\\
        \end{bmatrix}
        : q_i\in [0,1],q_1+q_2+q_3=1
        \right\}.
    \end{equation*}
    Initialize the algorithm with
    \[
    T_1=\begin{bmatrix}
            1 & 0 & 0\\
            0 & 0 & 1\\
            0 & 0 & 1\\
        \end{bmatrix}
    \]
    which has $\C_{T_1}=\W_\T=\{1\}$. This matrix maximises the hitting probability having $\olp^\T =p_1=\left(0,1,1\right)^\top$. Any transition matrix imposing $q_1=0$ achieves the maximum in $\max_{T\in \T} [T p_1](2)=q_2+q_3=1$, so one possible choice for the extreme point $T_2$ is 
    \[
    T_2=\begin{bmatrix}
            1 & 0 & 0\\
            0 & 1 & 0\\
            0 & 0 & 1\\
        \end{bmatrix}.
    \]
    However, $T_2$ disconnects state $2$ from the target set and its associated vector of hitting probabilities is $p_2=\left(0,0,1\right)^\top$. Therefore, without the additional tiebreaking condition in Equation \eqref{eq:condupperprob} the algorithm might fail to converge.  
\end{ex}
\subsection{Interpretation as an MDP and Connection with Policy Iteration}
The proposed algorithms can be seen as a variation of the policy iteration algorithm for Markov Decision Processes (MDPs)~\cite{Puterman1994, SuttonBarto2018}. In particular, for every state $x\in \X$ the set $\T_x$ can be seen as a set of actions. Then, every row of a transition matrix $T$, $T(x,\cdot)$, is an admissible action in state $x$.
Moreover, every transition matrix $T\in \T$ can be seen as a policy $\pi_T: \X \to \T$. Taking the reward function $r$ as the hitting indicator $\one_A$, we find that, for every policy $\pi_T$, the value function is the hitting probability $p^T$. The optimal value $v^*=\ulp^\T=\inf_{T\in \T} p^T$ satisfies the (undiscounted) Bellman optimality equation~\cite{SuttonBarto2018} (which is equivalent to Equation \eqref{eq:lowhitprob}). The optimal value and the optimal policy can be computed using a policy iteration algorithm~\cite{Howard1960} which, similar to the algorithm in Proposition \ref{prop:424algolow}, alternates between policy evaluation, i.e. solving the linear system, and policy improvement. 

These connections between imprecise Markov chains and MDPs have also previously been described in the literature; see, for instance, comments by Krak et al.~\cite{krak2019hitting, krak2021comphit} about these relations, as well as for interpretations of similar algorithms for IMCs as being versions of value- and policy iteration methods for MDPs.

\section{Practical Considerations and Complexity Analysis}\label{sec:practical}
It remains to discuss how the algorithms described in Proposition \ref{prop:424algolow} and Proposition \ref{mainupprop} can be used in practice. This consists of three main aspects, namely how many iterations are required to reach convergence, the complexity of any such iteration, and the computational cost of finding suitable starting points $T_{1} \in \mathcal{T}$. We assume that the set $\mathcal{T}$ is explicitly given, while the operators $\underline{T}$ and $\overline{T}$ are implicitly defined through Definition \ref{defn:lowoper42}. This is the situation most common in practice, as the set $\mathcal{T}$ often characterises the uncertainty surrounding the true process by providing error bounds on the true transition probabilities. 
 
\subsection{Time per Iteration}
The computation performed for each iteration in Proposition \ref{prop:424algolow} and Proposition \ref{mainupprop} can be divided into two parts. Firstly, we need to find the next element in the sequences $\left(p_{n}\right)_{n \in \mathbb{N}}$. By Proposition \ref{prop:418}, this requires us to solve a linear system of equations, which can be done in at most $\mathcal{O}\left(N^{c}\right)$ time~\cite{Demmel2007}: the time required to multiply two $N \times N$ matrices. Currently, the best (published) estimate of $c$ is $c=2.371339$ \cite{alman2025more}. 

Secondly, we investigate the complexity of computing the next element of the sequences $\left(T_{n}\right)_{n \in \mathbb{N}}$.
Finding such an extreme point requires solving the optimisation problem from Definition \ref{defn:lowoper42} and, therefore, strongly depends on the definition of $\mathcal{T}$.
Typically, this computation takes at least $\mathcal{O}(N^{3})$ time~\cite{krak2021comphit}. In the remainder of the analysis, we denote the complexity of finding an extreme point $T \in \mathcal{T}$ such that $\underline{T} f=T f$ or $\overline{T} f=T f$ for some arbitrary $f \in \mathbb{R}^{\mathcal{X}}$ by $\mathcal{O}(\operatorname{Extr}(\mathcal{T}, \mathcal{X}))$. Then, for Proposition \ref{prop:424algolow} and Proposition \ref{mainupprop}, the time per iteration is at most $\mathcal{O}\left(N^{c}+\operatorname{Extr}(\mathcal{T}, \mathcal{X})\right)$.

\subsection{Starting Points}
We now discuss the complexity of finding a suitable $T_{1} \in \mathcal{T}$ for Proposition \ref{prop:424algolow} and Proposition \ref{mainupprop}.  
\subsubsection{Lower Hitting Probabilities}
As already presented in Section \ref{subsec:computing_lower}, definition (LR1) provides a practical algorithm to identify the set $\A_\T$. 
The computational cost of this algorithm is $\mathcal{O}\left(\operatorname{Extr}(\mathcal{T}, \mathcal{X})N\right)$. This follows from the fact that for each $z\in \X\setminus \D_k$ we need to check whether $[\underlineT\one_{\D_k}](z)>0$: this operation is executed $\mathcal{O}(N)$ times.
 
Having obtained $\A_{\T}$, it remains to identify a matrix $T_{1} \in \mathcal{T}$ such that $\mathcal{C}_{T_{1}}=\A_{\T}$. In the last iteration of the algorithm, we find $[\underlineT \one_{\D_{k^*}}](z)=0$ for all $z\in \A_\T$. Hence, we also have $|\A_\T|$ transition matrices, one for each $z\in \A_\T$, satisfying $[S_z\one_{\D_{k^*}}](z)=[\underlineT \one_{\D_{k^*}}](z)=0$. We then construct $T_1$ row-wise by setting $T_1(z,\cdot)=S_z(z,\cdot)$.
In summary, identifying $\A_{\T}$ and finding $T_{1} \in \mathcal{T}$ to use in Proposition \ref{prop:424algolow} requires at most $\mathcal{O}\left(\operatorname{Extr}(\mathcal{T}, \mathcal{X})N\right)$ time.

\subsubsection{Upper Hitting Probabilities}
The matrix $T_1\in \T$ with $\C_{T_1}=\W_\T$ can be constructed row-wise.
In particular, for every $x\in \X$, choose any $T_1(x,\cdot)$ in the relative interior of $\T_x$, taken with respect to the affine hull of $\T_x$.
In this way, $T_1(x,y)>0$ for all $y\in \X$ such that $[\overlineT\one_{\{y\}}](x)>0$.
The computational cost of this operation is $\mathcal{O}(N)$.
 
\subsection{Number of Iterations}
We now investigate the number of iterations needed for convergence. We start by defining a sequence of vectors $\left(p_{n}\right)_{n \in \mathbb{N}}$ to be {strictly decreasing} if $p_{n+1} \leq p_{n}$ and $p_{n+1} \neq p_{n}$ for all $n \in \mathbb{N}$ and {strictly increasing }if $p_{n+1} \geq p_{n}$ and $p_{n+1} \neq p_{n}$. We then have the following result.
\begin{cor}\label{cor:431}
Let $(p_{n})_{n\in\mathbb{N}}\subset[0,1]^{\X}$ be the sequence constructed in Proposition~\ref{prop:424algolow}. Then $(p_n)$ is either strictly decreasing everywhere, or there exists $n^*\in\mathbb{N}$ such that $(p_n)$ is strictly decreasing for all $n< n^*$ and $p_k=p^*$ for all $k\ge n^*$, where $p^*=\lim_{n\to\infty}p_n$.

Similarly, let $(p_{n})_{n\in\mathbb{N}}\subset[0,1]^{\X}$ be the sequence constructed in Proposition~\ref{mainupprop}. Then $(p_n)$ is either strictly increasing everywhere, or there exists $n^*\in\mathbb{N}$ such that $(p_n)$ is strictly increasing for all $n< n^*$ and $p_k=p^*$ for all $k\ge n^*$, where $p^*=\lim_{n\to\infty}p_n$.
\end{cor}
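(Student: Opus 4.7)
The strategy is to show that if $p_{n+1}=p_n$ for some $n\in\N$, then the sequence stabilises from that point onward, i.e. $p_k=p_n$ for all $k\ge n$. The dichotomy in the statement then follows by letting $n^*$ be the smallest such index (option two), or observing that no such index exists (option one). The only real content is noticing that equality of two consecutive iterates forces $p_n$ to solve the nonlinear fixed-point equation characterising $\ulp^\T$ (respectively $\olp^\T$); once this is established, monotonicity plus a two-sided sandwich finishes the argument.

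For the lower-probability sequence from Proposition~\ref{prop:424algolow}, assume $p_{n+1}=p_n$. By construction, $p_{n+1}=\one_A+\one_{A^c}\cdot T_{n+1}p_{n+1}$ and $T_{n+1}p_n=\underlineT p_n$. Substituting $p_{n+1}=p_n$ into the first identity and rewriting $T_{n+1}p_n$ via the second yields
\[
p_n \;=\;\one_A+\one_{A^c}\cdot\underlineT p_n,
\]
so $p_n$ solves \eqref{eq:lowhitprob}. By Lemma~\ref{lemma:ctnat} we have $\C_{T_n}=\A_\T$, hence Proposition~\ref{prop:hitprob} applied to the precise chain $\PP_{T_n}$ gives $p_n(x)=0$ for every $x\in\A_\T$, while trivially $p_n(x)=1$ for $x\in A$. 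The uniqueness clause of Proposition~\ref{prop:423} then forces $p_n=\ulp^\T$. To propagate this equality, observe that each iterate $p_k=p^{T_k}$ is the hitting-probability vector of some chain in $\T$, so $p_k\ge\ulp^\T$ coordinatewise; combined with the nonincreasing property of Proposition~\ref{prop:424algolow} this gives $\ulp^\T\le p_{k+1}\le p_k=\ulp^\T$ for $k=n$, and by an easy induction $p_k=\ulp^\T$ for all $k\ge n$.

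The upper-probability case is entirely symmetric. If $p_{n+1}=p_n$ under the hypotheses of Proposition~\ref{mainupprop}, the same substitution, now using $T_{n+1}p_n=\overlineT p_n$, shows that $p_n$ satisfies \eqref{eq:system_uprob}; Lemma~\ref{inclu_lemma} supplies the boundary condition $p_n(x)=0$ for $x\in\W_\T$, and Proposition~\ref{prop:427} forces $p_n=\olp^\T$. Using $p_k=p^{T_k}\le\olp^\T$ together with the nondecreasing property of Proposition~\ref{mainupprop} then yields $p_k=\olp^\T$ for all $k\ge n$ in the same way. The tiebreaking condition of Proposition~\ref{mainupprop} plays no role here: I only invoke uniqueness of the fixed point subject to the boundary conditions, not any concrete construction of a maximiser. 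I do not foresee any serious obstacle; the main step is simply recognising that $p_{n+1}=p_n$ is strong enough to identify $p_n$ with the true imprecise hitting probability, after which the sandwich closes the induction.
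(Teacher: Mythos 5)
Your proof is correct. Having $p_{n+1}=p_n$ does force $p_n$ to solve the nonlinear fixed-point equation, the boundary conditions you extract from Lemma~\ref{lemma:ctnat} (resp.\ Lemma~\ref{inclu_lemma}) together with Proposition~\ref{prop:hitprob} are exactly those that the uniqueness clause of Proposition~\ref{prop:423} (resp.\ Proposition~\ref{prop:427}) requires, and the two-sided sandwich $\ulp^\T\le p_k\le p_n=\ulp^\T$ for $k\ge n$ then yields stabilisation, with the dichotomy following by taking $n^*$ to be the least such $n$ when one exists. The paper itself only defers to Krak's Corollary~6 without reproducing the argument, so a direct comparison is not possible, but it is worth noting that there is a slightly more local route that avoids the nonlinear uniqueness theorem altogether: if $p_{n+1}=p_n$, then $T_{n+2}p_{n+1}=\underlineT p_{n+1}=\underlineT p_n=T_{n+1}p_n=T_{n+1}p_{n+1}$, so $p_{n+1}$ already solves the linear system defining $p_{n+2}$ restricted to $A^c\setminus\A_\T$; Lemma~\ref{lemma:ctnat} gives $\C_{T_{n+2}}=\A_\T$, and the invertibility of $I-T_{n+2}|_{A^c\setminus\A_\T}$ from Lemma~\ref{lemma:417} then forces $p_{n+2}=p_{n+1}$, whence stabilisation by induction. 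Your global argument has the pleasant side effect of immediately identifying the stabilised value as $\ulp^\T$ (resp.\ $\olp^\T$), whereas the local one is more self-contained, using only the machinery already deployed inside Proposition~\ref{prop:424algolow} (resp.\ Proposition~\ref{mainupprop}) rather than the characterisation propositions; both are sound.
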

\begin{proof}
    Completely analogous to the proof of Krak's Corollary 6 \cite{krak2021comphit}.
\end{proof} 

If we now assume that $\mathcal{T}$ has at most $m \in \mathbb{N}$ extreme points, we can use Corollary \ref{cor:431} to find an upper bound on the number of iterations performed in Proposition \ref{prop:424algolow} and Proposition \ref{mainupprop}.

\begin{cor}\label{cor:432}
Let $\mathcal{T}$ have $m\in\mathbb{N}$ extreme points. Then the sequence $(p_n)_{n\in\mathbb{N}}$ constructed in Proposition~\ref{prop:424algolow} stabilises in at most $m$ steps: there exist $n^*\in\mathbb{N}$ with $n^*\le m$ such that $p_k=p^*$ for all $k\ge n^*$. 

The same conclusion holds for the sequence constructed in Proposition~\ref{mainupprop}.
\end{cor}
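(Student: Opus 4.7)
Proof proposal.

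The plan is a pigeonhole-type counting argument. The key observation is that a transition matrix $T$ uniquely determines its vector of hitting probabilities $p^T$ as the minimal nonnegative solution of $p = \one_A + \one_{A^c} \cdot Tp$ (Proposition~\ref{prop:418}). Applied to the iterates of Proposition~\ref{prop:424algolow}, this yields an injective correspondence $T_n \mapsto p_n$: whenever $p_i \neq p_j$, the matrices $T_i$ and $T_j$ must be distinct.

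I would then invoke Corollary~\ref{cor:431}, which gives the dichotomy that $(p_n)$ is either strictly monotone at every coordinate for all $n\in\N$, or stabilises at some finite index $n^*$ with $p_1,\dots,p_{n^*}$ pairwise distinct and $p_k=p_*$ for all $k\ge n^*$. The strictly monotone branch is ruled out by the counting argument: it would produce an infinite sequence of pairwise distinct $p_n$, hence of pairwise distinct $T_n$, and since $T_2,T_3,\dots$ are by construction extreme points of $\T$, this would contradict the assumption that $\T$ has only $m$ extreme points. Hence the sequence must stabilise. In the stabilising case, the injectivity forces $T_1,\dots,T_{n^*}$ to be pairwise distinct, and $T_2,\dots,T_{n^*}$ are extreme points by construction. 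Combined with $T_1$ chosen as an extreme point via the initialisation in Section~\ref{sec:practical}, one obtains $n^*$ pairwise distinct extreme points, so $n^*\le m$. The argument for the upper sequence from Proposition~\ref{mainupprop} is verbatim: the tie-breaking clause in that proposition restricts the choice of maximising extreme point but does not affect the injection $T_n\mapsto p_n$ nor the dichotomy of Corollary~\ref{cor:431}.

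The main delicate point is the status of $T_1$: the propositions permit an arbitrary starting matrix $T_1\in\T$ with the prescribed trivial class $\C_{T_1}$, and such a matrix need not \emph{a priori} be an extreme point. The initialisation procedures described in Section~\ref{sec:practical} always yield an extreme $T_1$ (for the lower case as the matrix produced in the last iteration of the (LR1) algorithm, and for the upper case after a minor adjustment of the relative-interior choice), so the stated bound $n\le m$ is attained; otherwise the same argument would give only the slightly weaker bound $n^*\le m+1$.
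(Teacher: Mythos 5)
Your argument is correct and takes the natural pigeonhole route that the paper's reference to Krak's Corollary 7 gestures at: Proposition~\ref{prop:418} makes the map $T_n\mapsto p_n$ well-defined and injective on the set of transition matrices used, Corollary~\ref{cor:431} gives the strict-monotone/stabilising dichotomy, and counting distinct extreme points among $T_2,\dots,T_{n^*}$ rules out the infinite branch and bounds $n^*$. Your remark that the choice of $T_1$ is the delicate point is well taken and worth dwelling on for a moment: the linear-system/extreme-point iteration only forces $T_2,\dots,T_{n^*}$ to be extreme, so the bare counting argument yields $n^*-1\le m$, i.e.\ $n^*\le m+1$, unless $T_1$ is itself an extreme point. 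For the lower algorithm this is harmless, since by Proposition~\ref{prop:225} the matrix produced by the last iteration of the $\operatorname{(LR1)}$ construction can always be taken extreme. For the upper algorithm, however, the initialisation actually described in Section~\ref{sec:practical} deliberately places each row of $T_1$ in the \emph{relative interior} of its credal set $\T_x$, so $T_1$ is not extreme; with that choice of $T_1$ the clean bound is $m+1$, not $m$, and your parenthetical ``minor adjustment'' is doing real work. (One should also check that an extreme $T_1$ with $\C_{T_1}=\W_\T$ exists at all, which is not automatic, and that the tiebreak clause in Proposition~\ref{mainupprop} does not then force a non-extreme row to be carried forward.) None of this is a flaw in your proof --- you flagged the issue correctly --- but it does show that the stated bound of $m$ implicitly relies on an extreme starting matrix, an assumption the paper does not make explicit and which its own upper-case initialisation procedure violates.
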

\begin{proof}
    Completely analogous to the proof of Krak's Corollary 7 \cite{krak2021comphit}. 
\end{proof}
The bound established by Corollary~\ref{cor:432} is tight, i.e. there are instances in which the presented algorithms need to cycle over every extreme point of $\T$ before reaching the optimal transition matrix. 
\begin{ex}\label{ex:worstcase}
Let $\X=\{1,2,3\}$ be the state space, and let $\{2\}$ be the target set. Let the set of transition matrices be
\begin{equation*}
    \T(m)=\ch\left\{T_n=
    \begin{bmatrix}
        1-\frac{1}{n}-\frac{1}{n^2} & \frac{1}{n} & \frac{1}{n^2}\\
        0 & 1 & 0\\
        0 & 0 & 1\\
    \end{bmatrix}
    : n=2^k, k\in \N, 1\le k\le m
    \right\},
\end{equation*} 
where $m\in \N$ is the number of extreme points of $\T(m)$. The situation is represented by the following graph:

\begin{minipage}{0.48\textwidth}
    \begin{center}
    \begin{tikzpicture}[
        xscale=1.75,yscale=1.75,
        every node/.style={draw=black,circle,fill=black!70,inner sep=2pt},
        redNode/.style={draw=red, fill=red!50},
        blueNode/.style={draw=blue, fill=blue!55},
        blackNode/.style={draw=black, fill=black},
        every label/.style={rectangle,fill=none,draw=none},
        every edge/.style={draw,bend right,looseness=0.3,
            postaction={
                decorate,
                decoration={
                    markings,
                    mark=at position 0.8 with {\arrow[#1]{Stealth}}
                }
            }
        }
    ]
        \node[blueNode,label=180:\textbf{1}] (1) at (2,54.4) {};
        \node[blueNode,label=180:\textbf{2}] (2) at (1.15,55.6) {};
       \node[blueNode,label=180:\textbf{3}] (3) at (2.85,55.6) {};
        \path (1) edge[bend left=0] node[left, draw=none, fill=none, xshift=0.9pt] {$\nicefrac{1}{n}$} (2);
        \path (1) edge[bend right=0] node[right,draw=none, fill=none, yshift=0.1pt] {$\nicefrac{1}{n^2}$} (3);
        \path (1) edge[relative=false,out=270,in=345,min distance=3.5ex] node[right,draw=none, fill=none, yshift=0.1pt] {$1-\nicefrac{1}{n}-\nicefrac{1}{n^2}$} (1);
        \path (2) edge[relative=false,out=15,in=90,min distance=3.5ex] node[right,draw=none, fill=none, yshift=-0.1pt] {$1$} (2);
        \path (3) edge[relative=false,out=15,in=90,min distance=3.5ex] node[right,draw=none, fill=none, yshift=-0.1pt] {$1$} (3);
    \end{tikzpicture}
\end{center}
\end{minipage}
\begin{minipage}{0.48\textwidth}
        \centering
        \includegraphics[width=0.69\linewidth]{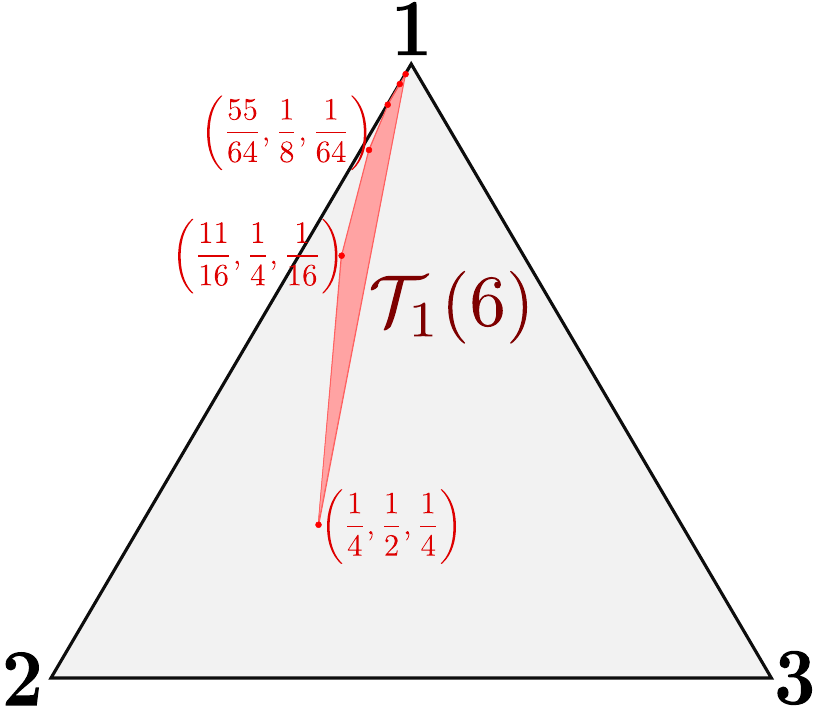}
        \captionof{figure}{Credal set of $\T_1(6)$.}
        \label{fig:credalset_worstcase}
\end{minipage}
    
Given $k\in \N$ and $n=n(k)=2^k$, we find that the vector of hitting probabilities associated with $T_{n(k)}$ equals $p_{n(k)}=(\nicefrac{n}{n+1}, 1, 0)$. We claim that the matrix $T_{n(k+1)}$ satisfies $T_{n(k+1)}= \argmax_{T\in \T} T  p_{n(k)}$ for all $1\le k\le m-1$, i.e. 
\begin{align*}
    k+1=\argmax_{j\in \N, 1\le j\le m} T_{n(j)} p_{n(k)}.
\end{align*}
Expanding this expression:
\begin{align*}      
    \argmax_{j\in \N, 1\le j\le m} T_{n(j)} p_{n(k)}&=\argmax_{j\in \N, 1\le j\le m} \left(1-2^{-j}-2^{-2j} , 2^{-j}, 2^{-2j}\right) \cdot 
    \begin{pmatrix}
        \frac{2^k}{2^k +1}\\
        1\\
        0
    \end{pmatrix}\\
    &=\argmax_{j\in \N, 1\le j\le m} \left(\frac{2^k}{2^k +1}(1-2^{-j}-2^{-2j})+2^{-j} \right)\\
    &=\argmax_{j\in \N, 1\le j\le m} \left(2^{-j}\left(1-\frac{2^k}{2^k +1}\right) - 2^{-2j}\frac{2^k}{2^k +1} \right)\\
    &=\argmax_{j\in \N, 1\le j\le m}\left(2^{-j}\left(\frac{1}{2^k +1}-2^{-j}\frac{2^k}{2^k +1} \right)\right).
\end{align*}
The maximum is achieved when $2^{-j}=\frac{2^{-k}}{2}=2^{-(k+1)}$, so when $j=k+1$, which is what we wanted to show.

Initialize the algorithm for upper hitting probabilities with the matrix $S_1\coloneqq T_{n(1)}$. After one iteration, we get $S_2=T_{n(2)}$, as \[S_2 p_{n(1)}=\max_{T\in \T} Tp_{n(1)}=\overlineT p_{n(1)},\] and so on. Thus, only after cycling over all $m$ extreme points of $\T(m)$ we eventually find the maximising matrix in $S_m=T_{n(m)}$. 
\end{ex}
An analogous example can be given for lower hitting probabilities.
Despite these examples, the average behaviour differs from the worst-case scenario that we just presented; the following section showcases this.
\section{Numerical Analysis}\label{sec:numanal}
In this section, we present the results of numerical experiments designed to showcase the average behaviour of the presented algorithms, namely the fact that the average number of iterations is significantly lower than the worst-case scenario illustrated in Example \ref{ex:worstcase}. 

\subsection{Description of the Experimental Setup}
We explain the experimental setup. Given $N$ the number of states of $\X$, we set $A=\{1\}$ to be the target state.  First, we construct a random graph with $N$ nodes, where each directed edge indicates a positive upper probability of transition between the corresponding states. Then, we use this graph to generate the set of transition matrices $\T$.
We construct the random graph in the following three steps.
\begin{itemize}
    \item State $N$ is made absorbing, i.e. any random walk that reaches state $N$ remains there with probability one. We impose this condition so that the upper hitting probability can be less than one. 
    \item We construct a random tree rooted in $A=\{1\}$. Specifically, let $\pi$ be a random permutation of $\X\setminus (A\cup \{N\})$; for each state $x$ in the order given by $\pi$, we draw a directed edge from $x$ to a state chosen uniformly at random from the set
    $A\cup \{y\mid \pi(y) \le \pi(x)\}$.
    This construction is chosen so that every state but $N$ has an upper hitting probability greater than zero.
    \item For each pair of nodes, $x\in \X\setminus (A\cup \{N\})$ and $y\in \X$, we add a directed edge from $x$ to $y$ with probability $\nicefrac{(\lambda-1)}{(N-1)}$, where $1\le \lambda \le N$. This Erd\H{o}s-Rényi-like~\cite{erdos1960evolution} construction ensures that each node has an average  degree of $\lambda$. In this way, we can easily control the graph's density and study its influence on the average number of iterations.
\end{itemize}
We now generate the set of transition matrices $\T$. 
To do that, we consider the following two methods:
\begin{enumerate}
    \item Given a parameter $0<\varepsilon<1$, we sample a transition matrix $\tilde{T}$ uniformly at random\footnote{In particular, for each $x\in \X\setminus (A\cup \{N\})$, we sample a point $p_x$ uniformly at random from the $n_x -1$--dimensional simplex, and then we construct $\tilde{T}$ row-wise, i.e. $\tilde{T}|_{E_x}(x,\cdot) = p_x$, where $E_x$ is the set of states $x$ is connected to.} and we $\varepsilon$-contaminate it~\cite{Huber1964,itip:specialcases}. In particular, the set of transition matrices is
    \[
    \T=\{(1-\varepsilon)\tilde{T}+\varepsilon S \mid \text{ for all trans. mat $S$}\}.
    \]
    For numerical experiments, we set $\varepsilon=0.1$; experiments with other values exhibited a similar behaviour.
    \item For each $x\in \X\setminus (A\cup \{N\})$, we sample $n_x$ points\footnote{We choose this number of points to be sampled so that $\T_x$ has the same dimension as the simplex in which it is embedded.} uniformly at random from the $n_x-1$-dimensional simplex, where $n_x$ is the number of outgoing edges of $x\in \X$, and $\T_x$ is their convex hull. Since the set of transition matrices $\T$ has separately specified rows, $\T$ is the Cartesian product of all $\T_x$.
\end{enumerate}
\subsection{Numerical Results}
Since the algorithms from Propositions \ref{prop:424algolow} and \ref{mainupprop} exhibit nearly identical iteration counts, we will often present the number of iterations without explicitly specifying which algorithm it refers to. 
Figure \ref{fig:imagetablesparsity} displays the average number of iterations, on the vertical axis, as a function of the average degree $\lambda$ of the graph, on the horizontal axis. The graphs contain either 10 (left) or 50 (right) nodes, and the set of transition matrices is generated using $\varepsilon$-contamination (top) or by taking the convex hull of random samples (bottom). The averages are obtained by running 1000 independent simulations.

\begin{figure}[h]
    \centering
    \includegraphics[width=\linewidth]{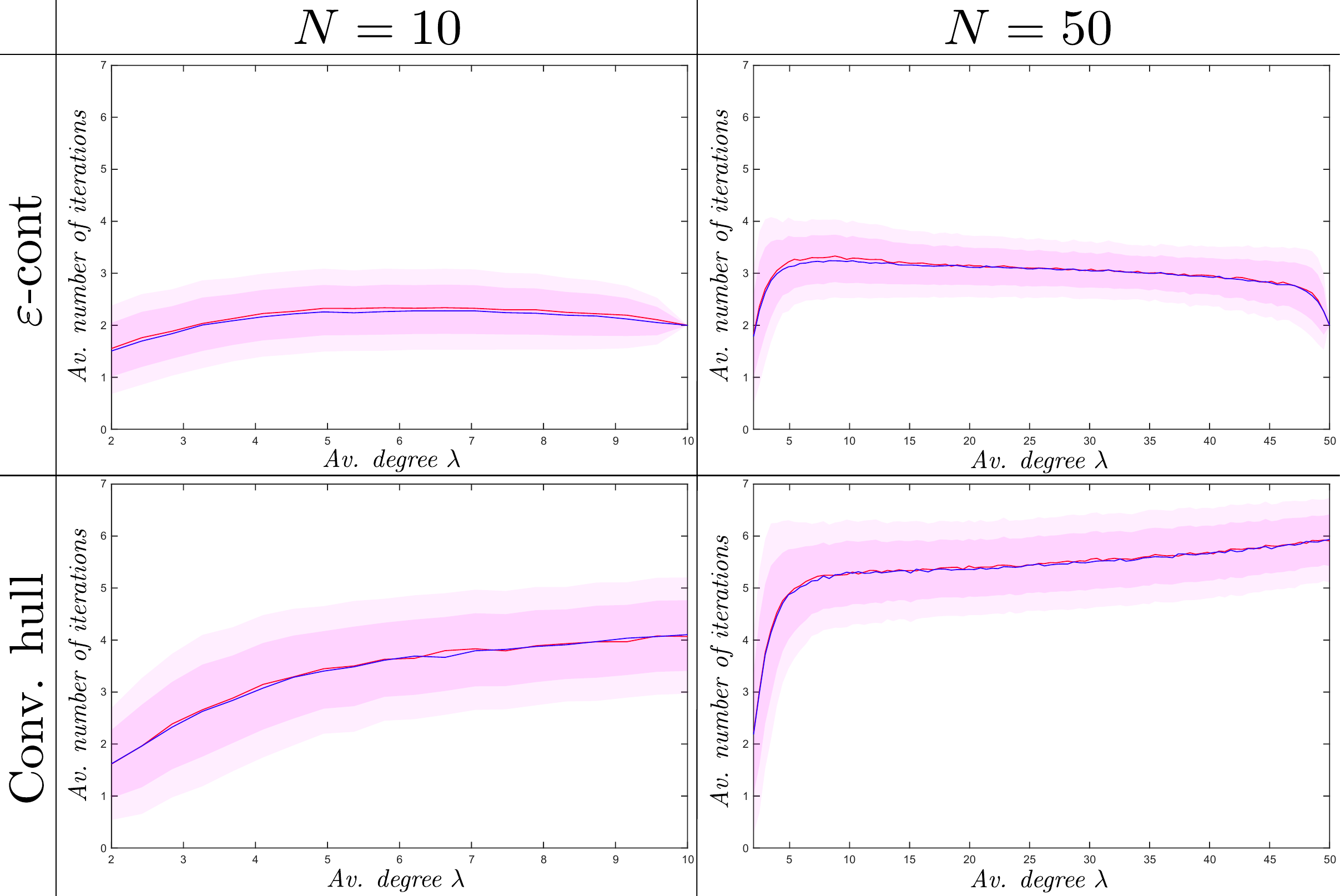}
    \caption{The red and blue lines represent the average number of iterations needed to compute lower and upper hitting probabilities, respectively, as a function of the average degree $\lambda$ of each node of the graph. The two magenta shaded regions correspond to the intervals within one standard deviation ($\pm \sigma$) and approximately $95\%$ confidence ($\pm 1.96\sigma$) around the mean number of iterations.}
    \label{fig:imagetablesparsity}
\end{figure}
Both algorithms are highly efficient, requiring only few iterations to converge. The average number of iterations never exceeds six, while the maximum observed was nine.
The histogram in Figure \ref{fig:histogram} illustrates in detail the empirical distribution of iteration counts over the 1000 simulated experiments.
\begin{figure}[h]
    \centering
    \includegraphics[width=\linewidth]{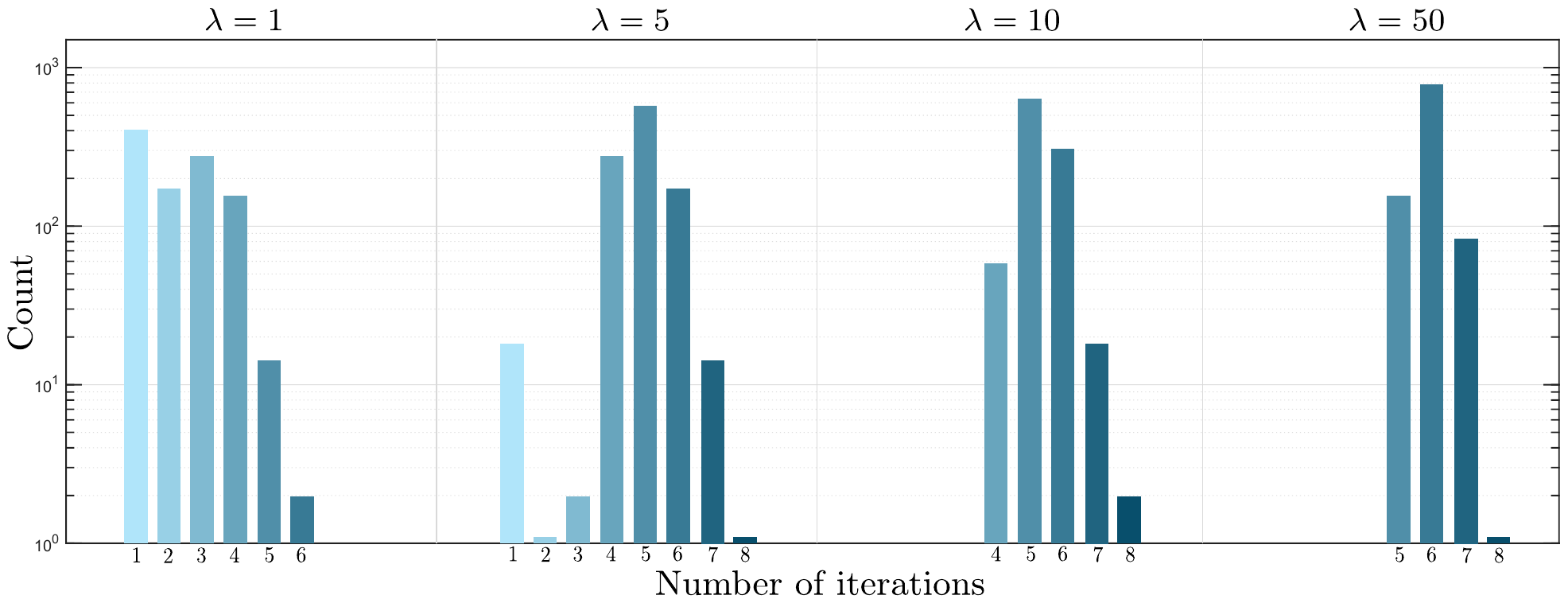}
    \caption{Number of instances, out of 1000, in which the algorithm for upper hitting probabilities converged in a given number of iterations, for different values of the average degree $\lambda$. The graph has $N=50$ nodes and the set of transition matrices is generated by taking the convex hull of transition matrices sampled uniformly at random.}
    \label{fig:histogram}
\end{figure}

Furthermore, the number of iterations for the complete graph with $N$ nodes and set of transition matrices generated via $\varepsilon$-contamination is always two. This is a consequence of the fact that the credal sets of this uncertainty model are probability intervals of width $\varepsilon$, namely
\[
    \T=\{T \text{ trans. mat} \mid (1-\varepsilon)\tilde{T}(x,y)\le T(x,y)\le (1-\varepsilon)\tilde{T}(x,y)+\varepsilon, \ \forall x,y\in \X\},
\]
for a $\tilde{T}$ sampled uniformly at random. For a node $x$ connected to the target set $A=\{1\}$, the first iteration of the algorithm that computes upper hitting probabilities assigns maximal probability of reaching $A$ in one step, i.e. 
\[
T_2(x,1)=(1-\varepsilon)\tilde{T}(x,1)+\varepsilon.
\]
The rest of the probability mass is 
\begin{align*}
1-\left[(1-\varepsilon)\tilde{T}(x,1)+\varepsilon\right] & = 1-\left[(1-\varepsilon)\left(1-\sum_{y\ne 1} \tilde{T}(x,y)\right)+\varepsilon\right] \\
&  = (1-\varepsilon)\sum_{y\ne 1} \tilde{T}(x,y) = \sum_{y\ne 1} (1-\varepsilon)\tilde{T}(x,y).
\end{align*}
Therefore, we must have
\[
T_2(x,y)=(1-\varepsilon)\tilde{T}(x,y),
\]
for all $y\in A^c$. The second (and last) iteration computes the upper hitting probability by solving the linear system and checks that no further improvement can be made. The same happens with lower hitting probabilities  with $\{N\}$ in place of $\{1\}$.

Moreover, we observe a nonmonotone relationship between the density of the graph and the average number of iterations in the $\varepsilon$-contamination case.
For fixed $\varepsilon$ and $N$, there is an intermediate average degree $\lambda^*$ that maximises the average number of iterations.
This behaviour is also shown in Figure \ref{fig:3dplot}.
\begin{figure}[h]
    \centering
    \includegraphics[width=\textwidth]{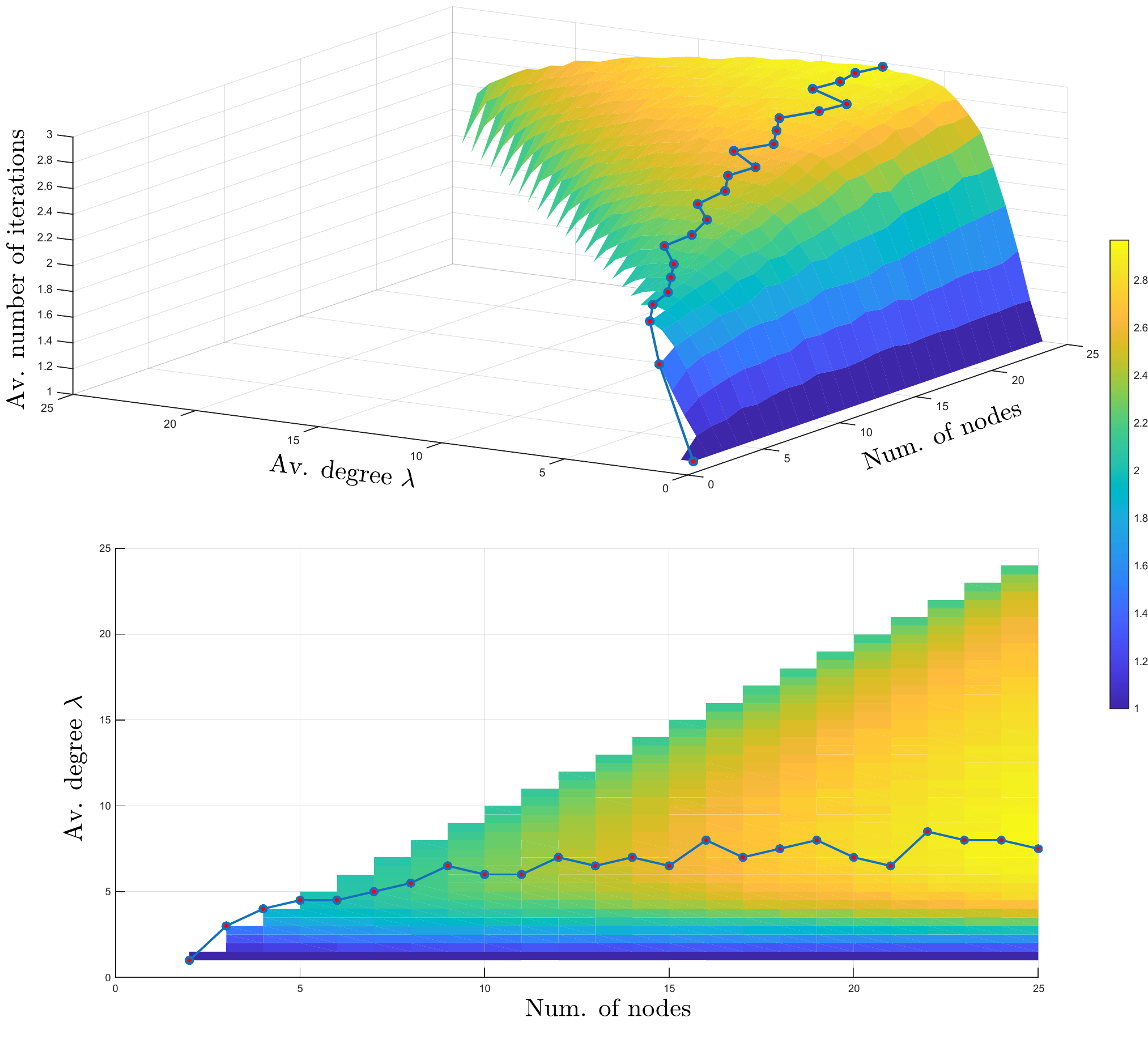}
    \caption{Average number of iterations as a function of the number of states $N$ and the average degree $\lambda$. Each red dot on the blue line marks, for a fixed number of states $N$, the average degree $\lambda^*=\lambda^*(N)$ that attains the maximum average number of iterations.}
    \label{fig:3dplot}
\end{figure}
A similar behaviour is observed when probability intervals are generated uniformly at random, resulting in arbitrary widths rather than the fixed width $\varepsilon$.
Although a formal theoretical justification is lacking, one plausible explanation for this empirical behaviour is the ``propagation of information''.
We already noted that if the graph is fully connected, the algorithm converges in two iterations: each node $x$ ``knows'' which of its neighbours is the one to assign maximal probability, i.e. the target or the absorbing state $N$. This does not happen in sparser graphs: not all nodes are connected, so node $x$ cannot a priori determine which neighbour should be assigned the highest probability.
The following example illustrates this phenomenon.
\begin{ex}
    Let $\X=\{1,\dots,N+2\}$ be the state space, and let $A=\{1\}$ be the target set. Let the set of transition matrices be 
    \begin{equation*}
          \T=\left\{T=
          \begin{bmatrix}
              1&0 &0&\dots&\dots&\dots &0\\
              q_2&0&0&\dots&\dots&\dots & 1-q_2\\
              0&q_3&0 &\dots& \dots & 1-q_3& 0\\
              0& 0 &q_4 & 0& \dots & 1-q_4& 0\\
              \vdots&\vdots&\vdots&\vdots&\vdots&\vdots&\vdots\\
              0&\dots&\dots&0&q_{N}&1-q_{N}&0\\
              \nicefrac{1}{2}&0&\dots&\dots&\dots&0&\nicefrac{1}{2}\\
              0&\dots&\dots&\dots&\dots&0&1
          \end{bmatrix}: 
          \begin{aligned}
      &\ q_x \in [0,b],\\
      &{\nicefrac{1}{2}} < b < 1
    \end{aligned}
          \right\}.
    \end{equation*}
     The situation is represented by the following graph:
     \begin{center}
    \begin{tikzpicture}[
        xscale=1.6,yscale=1.6,
        every node/.style={draw=black,circle,fill=black!70,inner sep=2pt},
        redNode/.style={draw=red, fill=red!50},
        blueNode/.style={draw=blue, fill=blue!55},
        blackNode/.style={draw=black, fill=black},
        whiteNode/.style={draw=white, fill=white},
        every label/.style={rectangle,fill=none,draw=none},
        every edge/.style={draw,bend right,looseness=0.3,
            postaction={
                decorate,
                decoration={
                    markings,
                    mark=at position 0.7 with {\arrow[#1]{Stealth}}
                }
            }
        }
    ]
        \node[blueNode,label={[xshift=-3.4mm]45:\textbf{N+2}}] (N1) at (2,56.5) {};
        \node[blueNode,label=90:\textbf{N+1}] (N) at (3.5,56.5) {};
         \node[blueNode,label=315:\textbf{N}] (Nm1) at (8.6,55){};
         \node[whiteNode] (W) at (7.1,55){};
        \node[blueNode,label=270:\textbf{1}] (1) at (2,55) {};
        \node[blueNode,label=270:\textbf{2}] (2) at (3.5,55) {};
       \node[blueNode,label=270:\textbf{3}] (3) at (5,55) {};
       \node[blueNode,label=270:\textbf{4}] (4) at (6.5,55) {};
       
        \path (2) edge[bend left=0] node[below,draw=none, fill=none] {$q_2$} (1);
        \path (3) edge[bend left=0] node[below,draw=none, fill=none] {$q_3$} (2);
         \path (4) edge[bend left=0] node[below,draw=none, fill=none] {$q_4$} (3);
        \path (N) edge[bend left=0] node[below,draw=none, fill=none, yshift=1.4mm] {$\nicefrac{1}{2}$} (N1);
        \path (N) edge[bend left=0] node[above,draw=none, fill=none, yshift=-1mm,xshift=7mm] {$\nicefrac{1}{2}$} (1);
         \path (2) edge[bend left=0] node[below,draw=none, fill=none, xshift=7.6mm, yshift=-0.25mm] {$1-q_2$} (N1);
         \path (3) edge[bend left=0] node[right,draw=none, fill=none, yshift=-5.23mm] {$1-q_3$} (N);
         \path (4) edge[bend left=0] node[right,draw=none, fill=none, xshift=3.2mm, yshift=-1.4mm] {$1-q_4$} (N);
         \path (Nm1) edge[bend left=0] node[right,draw=none, fill=none, yshift=1mm] {$1-q_{N}$} (N);
         \path (Nm1) edge[bend left=0] node[below,draw=none, fill=none, yshift=0.3mm] {$q_{N}$} (W);
         \path (1) edge[relative=false,out=225,in=135,min distance=3.5ex] (1);
         \path (N1) edge[relative=false,out=225,in=135,min distance=3.5ex] (N1);
         \node[draw=none, fill=none] at (6.86,55) {$\ldots$};
    \end{tikzpicture}
    \end{center}
     Initialise the algorithm to compute upper hitting probabilities with any matrix $T_1$ satisfying $0<q_2<\nicefrac{1}{2}$ and $q_3=b$. Then,  $p_1(1)=1$,  $p_1(N+2)=0$, $p_1(N+1)=\nicefrac{1}{2}$, $p_1(2)<\nicefrac{1}{2}$, $p_1(3)=b p_1(2)+ (1-b)p_1(N+1)<\nicefrac{1}{2}$ and $p_1(x)\le \nicefrac{1}{2}$ for every other $x\in \X$.
    
    In the first iteration, node $2$ maximises its probability of hitting the target by setting $q_2=b$.
    Node $3$ sets $q_3=0$ because it is connected to nodes $2$ and $N+1$, and $b\cdot p_1(2)+ (1-b) p_1(N+1)<p_1(N+1)$ since $p_1(2)<p_1(N+1)=\nicefrac{1}{2}$. 
    The same argument applies for every other node $x\in \{4,\dots, N\}$: $q_x$ is set to zero because $b \cdot p_1(x-1) + (1-b)p_1(N+1)<p_1(N+1)$ which follows from $p_1(x-1)<p_1(N+1)=\nicefrac{1}{2}$.
    Henceforth, solving the linear system yields $p_2(2)=b$ and $p_2(x)=\nicefrac{1}{2}$, for all $x\in \{3,\dots, N\}$. 
    
    In the second iteration, we have $b\cdot p_2(2)+(1-b)p_2(N+1)>p_2(N+1)$ because $b=p_2(2)>p_2(N+1)=\nicefrac{1}{2}$, so node $3$ now sets $q_3=b$, yielding a hitting probability of $p_3(3)=b^2 + \nicefrac{1}{2}(1-b)$. Other values of $q_x$ remain unchanged in this iteration.

    In the $k$-th iteration, we have $b\cdot p_{k}(k)+(1-b)p_{k}(N+1)>p_{k}(N+1)=\nicefrac{1}{2}$ and $q_{k+1}$ is set to $b$. It can be shown (e.g. using induction) that $p_k(k)=\nicefrac{1}{2} +b^{k-2}(b-\nicefrac{1}{2})$ which is greater than $\nicefrac{1}{2}$ if $b>\nicefrac{1}{2}$.
    
    Therefore, the information that larger $q_x$ yields greater hitting probabilities propagates backward one node per iteration, so that only after $N$ iterations the algorithm has reached convergence.
\end{ex}

\section{Conclusions and Future Work}\label{sec:conclusion}
In this work, we have investigated lower and upper hitting probabilities for imprecise Markov chains and developed efficient iterative algorithms for their computation. 
Here we summarise the main contributions and their implications.

In Section \ref{subsec:reachability} we have discussed in detail the notion of reachability in the imprecise setting. 
The literature review highlighted several different definitions of lower reachability; hence we have explored the connections among them, presenting examples to clarify their logical relations. 
In particular, we have shown the equivalence between definitions $\operatorname{(LR0)}$ and $\operatorname{(LR1)}$; this equivalence has proved crucial, as the latter definition provides a practical algorithm to identify the states that cannot lower reach the target and thus have zero lower hitting probability.
Using $\operatorname{(LR0)}$, we have been able to partition the state space and characterise lower hitting probabilities as the unique solution of a nonlinear fixed-point equation. The analogous characterisation for upper hitting probabilities fails; we needed to add stronger assumptions on the set $\T$ to restore uniqueness of the solution $\olp^\T$. 

In Section \ref{sec:computing} we have presented novel efficient iterative algorithms that compute lower and upper hitting probabilities. These algorithms alternate between solving a linear system over nontrivial states and selecting extreme points from the set of transition matrices. Although in principle these algorithms may visit all extreme points of $\T$, this has never occurred in practice during numerical experiments. 
In fact, our experiments, described in Section~\ref{sec:numanal}, have shown the efficiency of the algorithms: for every graph structure and uncertainty model analysed, very few iterations were required for convergence.

We highlight several directions for future work that follow directly from our study:
\begin{itemize}
  \item \textit{Formal justification of the nonmonotone relation between density and number of iterations.} We would like to investigate why the algorithms for computing lower and upper hitting probabilities show a nonmonotone relation between the density $\lambda$ of the graph and the average number of iterations. For specific uncertainty models (e.g. $\varepsilon$-contamination, probability intervals, etc.), we wish to find the average degree $\lambda^*=\lambda^*(N)$ that maximises the number of iterations.
  \item \textit{Handling pathological credal sets.} Although extremely rare, some credal set shapes force the algorithms to cycle over all extreme points, as in Example \ref{ex:worstcase}. Future work should identify such cases in advance and aim to derive specialised algorithms to compute lower and upper hitting probabilities that perform better in such regimes.  
  \item \textit{Extension to continuous-time IMCs.} One relatively straightforward step forward would be to generalise the fixed-point characterisation and the computational framework discussed here to continuous-time imprecise Markov chains. 
  \item \textit{Algorithms for conservative inference.} It follows from Corollary~\ref{cor:431} that the proposed algorithms provide, at each iteration $n\in\N$ before convergence, an interval which is an inner approximation of the actual lower/upper hitting probabilities $[\ulp^\T,\olp^\T]$, and which monotonically approaches these values. An interesting avenue of research consists of finding algorithms that instead compute a conservative approximation of lower and upper hitting probabilities, that is, an outer approximation that shrinks to these target values.
  \item \textit{Lower and upper meeting probabilities.} It would be interesting to characterise and compute efficiently lower and upper \emph{meeting} probabilities, i.e. the probability that two (or more) processes are ever in the same state at the same time. This present paper, combined with Sangalli {et al.}'s recent work on meeting times for imprecise Markov chains~\cite{sangalli2025meeting}, provides a foundation for this line of research.  
  \item \textit{Study the continuity of the hitting probability map.} The discontinuity of the map $T\mapsto p^T$ prevented us from adapting the argument for lower hitting probabilities to the upper case. Studying the continuity of this map could allow us to relax the strong conditions that guarantee the uniqueness of $\olp^\T$ in Proposition \ref{prop:427}. 
    \item \textit{Relation between lower and upper hitting times and probabilities.} Finally, we aim to investigate the relation between hitting times and probabilities in the imprecise framework, and seek to establish whether knowledge of one quantity allows us to infer bounds on the other. 
    
  \end{itemize}

\section*{Acknowledgements}
This work has been partly supported by the PersOn project
(P21-03), which has received funding from Nederlandse Organisatie voor Wetenschappelijk Onderzoek (NWO). The authors thank the two anonymous reviewers for their careful reading and valuable suggestions that helped improve the exposition. This paper is dedicated to the memory of Prof. Luigi Ferretti, whose kindness and dedication inspired generations of students.
\bibliography{refs.bib}

\appendix
\section{Proofs of Auxiliary Lemmas}
\label{app1}
\begin{proof}[Proof of Lemma \ref{lemma:rewritehitprob}]
Consider any \(x\in A^c\setminus \SSS\). Then
\begin{align*}
    [Tf](x) &= \sum_{y\in \X} T(x,y)f(y)\\
    &=\sum_{y\in \X} T(x,y)\Bigl[\bigl(f|_{A^c\setminus S}\bigr)^{\uparrow_{\X}}(y) + \one_A(y)\Bigr]\\
    &=\Bigl[T\Bigl(\bigl(f|_{A^c\setminus \SSS}\bigr)^{\uparrow_{\X}}\Bigr)\Bigr](x) + [T\one_A](x)\\
    &=\bigl[T|_{A^c\setminus \SSS} (f|_{A^c\setminus \SSS})\bigr](x) + [T\one_A](x),
\end{align*}
where we used \(f(y)=1\) for \(y\in A\) and \(f(y)=0\) for \(y\in \SSS\). 
\end{proof}
\begin{proof}[Proof of Lemma \ref{lemma:412}]
By definition
\[
[T^n\one_A](x)=\sum_{z\in\X} T(x,z)[T^{n-1}\one_A](z).
\]
 All terms in the sum are nonnegative and by hypothesis the left-hand side is strictly greater than 0. Hence there exists \(y\in\X\) such that
\[
T(x,y)[T^{n-1}\one_A](y)>0,
\]
so in particular \(T(x,y)>0\) and \([T^{n-1}\one_A](y)>0\).

We now check that \(y\in A^c\setminus\C_T\). First, \(y\notin A\): if \(y\in A\) then \(T(x,y)>0\) would imply
\([T\one_A](x)\ge T(x,y)>0\), contradicting the minimality assumption that \([T^m\one_A](x)=0\) for every \(m<n\) (recall \(n\ge 2\)). Thus \(y\in A^c\).

Second, \(y\notin\C_T\): by definition any \(z\in\C_T\) satisfies \([T^m\one_A](z)=0\) for all \(m\in\N\), but we have \([T^{n-1}\one_A](y)>0\), so \(y\notin\C_T\).
Therefore \(y\in A^c\setminus\C_T\), \(T(x,y)>0\) and \([T^{n-1}\one_A](y)>0\), which proves the lemma.
\end{proof}
\begin{proof}[Proof of Lemma \ref{lemma:413}]
We first prove the monotonicity claim. Let \(M\coloneqq T|_{\SSS}\). If \(f\le g\) then for every \(x\in\SSS\)
\[
[ Mf](x)=\sum_{y\in\SSS} M(x,y) f(y)\le \sum_{y\in\SSS} M(x,y) g(y)=[ Mg](x),
\]
since each \(M(x,y)\ge0\). Thus \(Mf\le Mg\). Iterating this argument gives, by simple induction on \(k\), that \(M^k f\le M^k g\) for all \(k\in\N_0\).

We now prove the bounds for \(\mathbf{0}\) and \(\mathbf{1}\). First note that \(\mathbf{0}\le\mathbf{1}\), so by the monotonicity just proved we have
\[
M^k\mathbf{0}\le M^k\mathbf{1}\quad\text{for all }k\in\N_0.
\]
But \(M^k\mathbf{0}=\mathbf{0}\) for every \(k\). Hence \(\mathbf{0}\le M^k\mathbf{1}\).
For the upper bound, observe that for every \(x\in\SSS\)
\[
[ M\mathbf{1}](x)=\sum_{y\in\SSS} M(x,y)\le \sum_{y\in\X} T(x,y)=1.
\]
 Thus \(M\mathbf{1}\le\mathbf{1}\). Applying the monotonicity repeatedly yields
\[
M^{k}\mathbf{1}=M^{k-1}(M\mathbf{1})\le M^{k-1}\mathbf{1}\le\cdots\le M\mathbf{1}\le\mathbf{1},
\]
so \(M^k\mathbf{1}\le\mathbf{1}\) for every \(k\in\N_0\).

\end{proof}
\begin{proof}[Proof of Lemma \ref{lemma:414}]
Let \(M\coloneqq T|_{{A^c\setminus \C_T}}\) and fix \(x\in{A^c\setminus \C_T}\). Since \(x\notin\C_T\), there exists a (finite) minimal integer
\[
n_x\coloneqq\min\{n\ge 1:[T^n\one_A](x)>0\}.
\]
Consider any path of length \(n_x\), $x=x_0,\dots, x_{n_x}$, with $x_{n_x}\in A$.
Because \(m<n_x\) implies \([T^m\one_A](x)=0\), such a path cannot visit \(A\) before time \(n_x\).
Moreover, by the definition of \(\C_T\), a path that ever enters \(\C_T\) cannot reach \(A\) at any later time; hence every path contributing to \([T^{n_x}\one_A](x)\) stays in \({A^c\setminus \C_T}\) up to time \(n_x-1\) and then jumps from \({A^c\setminus \C_T}\) to \(A\) at time \(n_x\).

On the other hand, \([M^{n_x}{\mathbf{1}}](x)\) is the total probability of all length-\(n_x\) paths that \emph{remain in \({A^c\setminus \C_T}\) for all \(n_x\) steps}.
Therefore the two events are disjoint, and we obtain
\[
[M^{n_x}{\mathbf{1}}](x) \le 1 - [T^{n_x}\one_A](x) < 1.
\]
Thus, for every \(x\in{A^c\setminus \C_T}\), there exists \(n_x\ge 1\) such that \([M^{n_x}{\mathbf{1}}](x)<1\).

Now we use Lemma~\ref{lemma:413}: since \(M{\mathbf{1}}\le {\mathbf{1}}\), the map \(k\mapsto M^k{\mathbf{1}}\) is componentwise nonincreasing:
\(M^{k+1}{\mathbf{1}}=M^k(M{\mathbf{1}})\le M^k{\mathbf{1}}\) for all \(k\in\N_0\).
Let \(n\coloneqq\max_{x\in{A^c\setminus \C_T}} n_x\), which is finite because \({A^c\setminus \C_T}\) is finite.
Then for any \(k\ge n\) and any \(x\in{A^c\setminus \C_T}\)
\[
[M^{k}{\mathbf{1}}](x) \le [M^{n_x}{\mathbf{1}}](x) < 1,
\]
which is what we wanted to prove.
\end{proof}

\begin{proof}[Proof of Corollary \ref{cor:415}]
Let $x\in \SSS$. Then
\begin{align*}
    [(T |_{A^c\setminus \SSS})^k \mathbf{1}](x) &= \sum_{y\in A^c\setminus \SSS} (T |_{A^c\setminus \SSS})^k(x,y)\\
    &= \ \sum_{\mathclap{\substack{y\in  A^c\setminus \SSS \\ w_1,\dots,w_{k-1}\in \X }}} \quad T |_{A^c\setminus \SSS}(x,w_1)T |_{A^c\setminus \SSS}(w_1,w_2)\cdots T |_{A^c\setminus \SSS}(w_{k-1},y)\\
    &= \sum_{\substack{y\in  A^c\setminus \SSS \\ w_1,\dots,w_{k-1}\in A^c\setminus \SSS }}T (x,w_1)T (w_1,w_2)\cdots T (w_{k-1},y).
\end{align*}
Using $A^c\setminus \SSS \subseteq A^c\setminus \C_T$ we obtain
\begin{align*}
    [(T |_{A^c\setminus \SSS})^k \mathbf{1}](x) &=\sum_{\substack{y\in  A^c\setminus \SSS \\ w_1,\dots,w_{k-1}\in A^c\setminus \SSS }}T (x,w_1)T (w_1,w_2)\cdots T (w_{k-1},y)\\
    &\le \sum_{\substack{y\in  A^c\setminus \C_T \\ w_1,\dots,w_{k-1}\in A^c\setminus \C_T }}T (x,w_1)T (w_1,w_2)\cdots T (w_{k-1},y)\\
    &\qquad = [(T |_{A^c\setminus \C_T})^k \mathbf{1}](x)<1,
\end{align*}
which is what we wanted to prove.
\end{proof}
\begin{proof}[Proof of Corollary \ref{cor:416}]
Let $\SSS\subseteq\X$ satisfy $\C_T\subseteq \SSS\subset A^c$ and set
\[
M\coloneqq T|_{A^c\setminus\SSS}.
\]
By Corollary~\ref{cor:415} there exists $n\in\N$ such that
\[
[M^{n}\mathbf{1}](x)<1 \quad\text{for all }x\in A^c\setminus\SSS.
\]
Then, Lemma~\ref{lemma:413} yields $0\le M^{n}\mathbf{1}$.
Thus
\[
||M^{n}\mathbf{1}||_{\infty}<1.
\]
Since $M^n>0$ we have $||M^{n}||\le ||M^{n}\mathbf{1}||_{\infty}$ and the thesis follows.
\end{proof}

\begin{proof}[Proof of Lemma \ref{lemma:419}]
    Since $x\rightarrow^T y$ for some $y \in A$, we have $c\coloneqq p^T(x)>0$. Suppose $c=1$. Then, either $x\in A$ and we are done, or $x$ is surrounded by states with hitting probability 1, i.e. $p^T(w)=1$ for all $w\in \X$ such that $T(x,w)>0$. From the finiteness of $\X$, it follows that there exists the desired path.
    
If $0<c<1$, then define the following set of states:
\begin{equation*}
    C\coloneqq x\cup \{z\in \X:T(x,z)>0, \ p^T(z)=c\}.
\end{equation*}
This set is not closed: if it were the case, every state in $C$ could not reach $A$ and would have hitting probability equal to zero.
Using the fact that
\[
p^T(z)=\sum_{w\in \X} T(z, w) p^T(w),
\]
we obtain that either some neighbour $w\in \X$ of $z\in C$ has $p^T(w)>c$ or all neighbours of $z$ are in $C$. Therefore, there exist $m\in \N$ and a sequence $x=x_0,\dots, x_m$ such that $T(x_{i},x_{i+1})>0$ and $p^T(x_i)=c<p^T(x_m)$, for all $i=0,\dots, m-1$. Since $\X$ is finite, repeating this argument at most $N={\X}$ times gets the desired sequence of states.
\end{proof}

\end{document}